\newcommand{\mmod}[1]{\,\,(\text{\rm mod}\,\, #1)}
\def\bfd{{\boldsymbol d}}
\def\bfn{{\boldsymbol n}}
\newtheorem{thm}{Theorem}
\newtheorem{lem}{Lemma}
\newtheorem{conj}{Conjecture}
\newtheorem{prop}{Proposition}
\numberwithin{equation}{section} \numberwithin{thm}{section}
\numberwithin{lem}{section} \numberwithin{problem}{section}
\numberwithin{cor}{section}
\begin{document}
\title{Twisted mixed moments of the Riemann zeta function}
\author[Javier Pliego]{Javier Pliego}
\address{Department of Mathematics, KTH Royal Institute of Technology, Lindstedtsv\"agen 25,
10044 Stockholm, Sweden}

\email{javierpg@kth.se}
\subjclass[2010]{Primary 11M06; Secondary 11D75}
\keywords{Riemann zeta function, moments of zeta, abc-conjecture}

\begin{abstract} We analyse a collection of twisted mixed moments of the Riemann zeta function and establish the validity of asymptotic formulae comprising on some instances secondary terms of the shape $P(\log T) T^{C}$ for a suitable constant $C<1$ and a polynomial $P(x)$. Such examinations are performed both unconditionally and under the assumption of a weaker version of the $abc$-conjecture.
\end{abstract}
\maketitle

\section{Introduction}
Moments of the Riemann-zeta function, namely
$$M_{k}(T)=\int_{0}^{T}\lvert \zeta(1/2+it)\rvert^{2k}dt$$ for $k\in\mathbb{N}$ are a central object of study in the analytic theory of numbers due to the numerous applications that results concerning these have appertaining to both the properties of zeta and the distribution of prime numbers. After the sucessful asymptotic evaluations of both the second and the fourth moment due to Hardy-Littlewood \cite{Hard} and Ingham \cite{Ing} respectively, these in turn being sharpened in subsequent work of numerous authors (see \cite{Bou6,Heath,Ivi}) and taking the shape
\begin{equation}\label{Mkk}M_{k}(T)=TP_{k^{2}}(\log T)+O(T^{1-\delta})\ \ \ \ \ \ \ \ \ \ \end{equation}
for $k=1,2$, wherein $P_{k^{2}}(x)$ is a degree-$k^{2}$ polynomial and $0<\delta<1$ is a fixed constant, no unconditional asymptotic evaluation of higher moments has been achieved. It is nonetheless worth mentioning the conjectural work of Conrey et al. \cite{Conr} delivering (\ref{Mkk}) with $\delta=1/2-\varepsilon$ for $k\geq 3$. The analysis of twisted moments of $L$-functions, which we define in this setting for $T>1$ and $k\in \mathbb{N}$ by
\begin{equation}\label{ostti}\int_{0}^{T}\lvert A(1/2+it)\rvert^{2}\lvert \zeta(1/2+it)\rvert^{2k}dt,\ \ \ \ \  \ \ \ \ \  \text{wherein}\ \ A(s)=\sum_{n\leq T^{\vartheta}}\frac{a_{n}}{n^{s}}\end{equation} is a Dirichlet polynomial with $\vartheta>0$ being a fixed constant and $a_{n}\ll n^{\varepsilon}$, delivers consequences concerning upper and lower bounds for $L$-functions and properties about the distribution of the zeroes, a brief account of the history of such a problem being provided at a later point in the introduction. 

We draw the reader's attention to \cite{Pli}, wherein for coefficients $a,b,c\in \mathbb{N}$ satisfying $a<c\leq b$ the asymptotic evaluation 
\begin{equation}\label{eleme}\int_{0}^{T}\zeta(1/2+ait)\zeta(1/2-bit)\zeta(1/2-cit)dt=\sigma_{a,b,c}T+O(T^{1-\delta})\end{equation} for some fixed $\delta>0$ was delivered, the corresponding error term being sharpened therein under the assumption of the following consequence of the abc-conjecture (see \cite[Lemma 2.2]{Pli}).
\begin{conj}\label{conj11}
Let $a,b,c\in\mathbb{N}$ be fixed natural numbers and $n_{1},n_{2},n_{3}\in\mathbb{N}$. Denote $$D=n_{1}^{a}-n_{2}^{b}n_{3}^{c}.$$ Then, if $D\neq 0$ one has the lower bound
$$\lvert D\rvert\gg n_{1}^{a-1-\varepsilon}n_{2}^{-1}n_{3}^{-1}.$$ 
\end{conj}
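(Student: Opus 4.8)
Although stated as a conjecture, this is a consequence of the $abc$-conjecture (cf. \cite[Lemma 2.2]{Pli}), and the plan is to carry out that deduction. I would use the $abc$-conjecture in the shape: for every $\varepsilon>0$ there is a constant $K_{\varepsilon}\geq 1$ such that whenever $x,y,z$ are coprime positive integers with $x+y=z$ one has $z\leq K_{\varepsilon}\,\mathrm{rad}(xyz)^{1+\varepsilon}$, where $\mathrm{rad}(m)$ denotes the product of the distinct prime divisors of $m$. The relation to be exploited is $n_{2}^{b}n_{3}^{c}+D=n_{1}^{a}$, whose three terms need not be coprime, so the first step is a reduction to the coprime case.

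Set $g=\gcd(n_{1}^{a},n_{2}^{b}n_{3}^{c})$, so that $g\mid D$, and put $A=n_{1}^{a}/g$, $B=n_{2}^{b}n_{3}^{c}/g$, $E=D/g$. Then $\gcd(A,B)=1$, whence $A$, $B$ and $\lvert E\rvert$ are pairwise coprime, and one of the relations $B+E=A$ (when $D>0$) or $A+\lvert E\rvert=B$ (when $D<0$) holds, with the left-hand sum being the larger of $A$ and $B$. In either case the $abc$-conjecture gives $\max(A,B)\ll_{\varepsilon}\mathrm{rad}(AB\lvert E\rvert)^{1+\varepsilon}$, and since $A\leq\max(A,B)$ this already bounds $A$.

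Next I would estimate the radicals crudely: $\mathrm{rad}(A)\leq\mathrm{rad}(n_{1})\leq n_{1}$, $\mathrm{rad}(B)\leq\mathrm{rad}(n_{2}n_{3})\leq n_{2}n_{3}$ (so the exponents $b$ and $c$ play no role), and $\mathrm{rad}(\lvert E\rvert)\leq\lvert E\rvert=\lvert D\rvert/g$. This yields
\begin{equation*}
\frac{n_{1}^{a}}{g}\ll_{\varepsilon}\Bigl(\frac{n_{1}n_{2}n_{3}\lvert D\rvert}{g}\Bigr)^{1+\varepsilon};
\end{equation*}
since $g\geq 1$, the factor $g^{-\varepsilon}$ only helps, so after discarding it and solving for $\lvert D\rvert$ one obtains $\lvert D\rvert\gg_{\varepsilon}n_{1}^{a/(1+\varepsilon)-1}\,n_{2}^{-1}n_{3}^{-1}$. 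The final step is cosmetic: given a target $\varepsilon_{0}>0$, I would take $\varepsilon=\varepsilon_{0}/a$ and use $1/(1+\varepsilon)\geq 1-\varepsilon$ to get $a/(1+\varepsilon)-1\geq a-1-\varepsilon_{0}$, hence $n_{1}^{a/(1+\varepsilon)-1}\geq n_{1}^{a-1-\varepsilon_{0}}$, and the stated bound follows with an implied constant depending only on $\varepsilon_{0}$ and $a$.

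The one step requiring care is the coprimality reduction: one must verify that after removing the common factor $g$ the numbers $A$, $B$, $\lvert E\rvert$ really satisfy the hypothesis of the $abc$-conjecture (this is where $\gcd(A,B)=1$ enters), and keep track of the division by powers of $g$, which fortunately works in our favour. Everything else is routine manipulation of radicals and exponents, the only genuine input being the (unproven) $abc$-conjecture itself.
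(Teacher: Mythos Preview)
The paper does not supply its own proof of this statement: it is recorded as Conjecture~\ref{conj11} and the deduction from the $abc$-conjecture is delegated to \cite[Lemma~2.2]{Pli}. Your argument correctly carries out that deduction --- the coprimality reduction via $g=\gcd(n_{1}^{a},n_{2}^{b}n_{3}^{c})$, the radical bounds $\mathrm{rad}(A)\leq n_{1}$, $\mathrm{rad}(B)\leq n_{2}n_{3}$, $\mathrm{rad}(\lvert E\rvert)\leq\lvert D\rvert/g$, and the final $\varepsilon$-adjustment are all sound --- and this is exactly the route one would expect the cited lemma to take.
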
 
Inspired by the preceding paragraphs and the above considerations, we introduce for $\theta>0$ the Dirichlet polynomial
\begin{equation}\label{diri}D_{\theta}(s)=\sum_{n\leq (t/2\pi)^{\theta}}n^{-s}\ \ \ \ \ \ \ \ \ \ s=\sigma+it,\end{equation} and for triples $(a,b,c)\in\mathbb{N}^{3}$ consider 
\begin{equation}\label{prrosa}M_{a,b,c}^{\theta}(T)=\int_{0}^{T}D_{\theta}(1/2+ait)\zeta(1/2-bit)\zeta(1/2-cit)dt.\end{equation}
We should note that in the examination of (\ref{prrosa}), extra terms arise which would otherwise vanish in its counterpart of (\ref{eleme}) and which encode the arithmetic structure of certain underlying diophantine equation. When either $a\mid b$ or $a\mid c$ we are then able to detect analogous lower order terms in the asymptotic evaluation of the preceding object subject to the validity of Conjecture \ref{conj11}.  
\begin{thm}\label{thmabcy}
Let $(a,b,c)\in\mathbb{N}^{3}$ such that $a<c< b$ and $a\geq 2$ with either $a\mid b$ or $a\mid c$ and $(a,b,c)=1$. Let $\theta\in\mathbb{R}$ be a fixed parameter satisfying $0<\theta<\min(c/2a,1)$. Under the assumption of Conjecture \ref{conj11} it follows that
$$M_{a,b,c}^{\theta}(T)=K_{a,b,c}T-C_{a,r,s}T^{1-\theta(1/2-a/2r)}+O(T^{1-\theta/2}+T^{1/2+\theta/2}+T^{\theta/2+3a\theta/2c+\varepsilon}),$$ where we wrote $\{r,s\}=\{b,c\}$ so that $a\mid r$ and $a\nmid s$, and where $K_{a,b,c}>1$ and $C_{a,r,s}>0$ are constants that shall be defined in (\ref{Cars}) and (\ref{Kabc}) respectively. If no divisibility relations hold and $\theta<\min(c/2a,1)$ then one instead has
$$M_{a,b,c}^{\theta}(T)=\sigma_{a,b,c}T+O(T^{1-\theta/2}+T^{1/2+\theta/2}+T^{\theta/2+3a\theta/2c+\varepsilon}),$$ wherein $\sigma_{a,b,c}>1$ is a constant that shall be defined in (\ref{sigmaa}).
\end{thm}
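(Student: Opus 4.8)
The plan is to open up the two zeta factors via the approximate functional equation, writing
$$\zeta(1/2-bit)=\sum_{n\le\sqrt{bt/2\pi}}n^{-1/2+bit}+\chi(1/2-bit)\sum_{n\le\sqrt{bt/2\pi}}n^{-1/2-bit}+O(t^{-1/4})$$
and likewise for $\zeta(1/2-cit)$, and then to multiply the four resulting cross terms against the genuine Dirichlet polynomial $D_{\theta}(1/2+ait)=\sum_{n_{1}\le(at/2\pi)^{\theta}}n_{1}^{-1/2-ait}$. This turns $M_{a,b,c}^{\theta}(T)$ into a sum of triple sums over $(n_{1},n_{2},n_{3})$ with weight $(n_{1}n_{2}n_{3})^{-1/2}$, each carrying an exponential integral $\int_{t_{*}}^{T}e^{i\psi(t)}\,dt$ over the interval on which all three truncations are met. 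For the cross term $\Sigma_{b}\Sigma_{c}$ built from the two main sums (no $\chi$-factor) the phase is linear, $\psi(t)=t\log(n_{2}^{b}n_{3}^{c}/n_{1}^{a})$, whereas for the other three it is of stationary-phase type (with $\arg\chi$ folded into $\psi$). The $t$-dependence of the ranges is harmless: a dyadic decomposition in $n_{1}$ suffices, a given $n_{1}$ contributing only for $t\gtrsim n_{1}^{1/\theta}$, and since $\theta<c/2a<b/2a$ the two zeta-truncations are inactive wherever $n_{1}^{a}\asymp n_{2}^{b}n_{3}^{c}$, there $n_{2},n_{3}\ll t^{a\theta/c}\ll t^{1/2}$.

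The order-$T$ main term comes from the diagonal $n_{1}^{a}=n_{2}^{b}n_{3}^{c}$ of $\Sigma_{b}\Sigma_{c}$, which (with $n_{1}\le(at/2\pi)^{\theta}$ the only active constraint) contributes $\int_{0}^{T}(c_{1}-\mathcal{T}(t))\,dt=c_{1}T-\int_{0}^{T}\mathcal{T}(t)\,dt$, where $c_{1}=\sum_{n_{1}^{a}=n_{2}^{b}n_{3}^{c}}(n_{1}n_{2}n_{3})^{-1/2}$ converges absolutely (since $n_{1}^{a/2}\ge\max(n_{2}^{b/2},n_{3}^{c/2})$ forces rapid decay) and $\mathcal{T}(t)=\sum_{n_{1}^{a}=n_{2}^{b}n_{3}^{c},\,n_{1}>(at/2\pi)^{\theta}}(n_{1}n_{2}n_{3})^{-1/2}$. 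The remaining three cross terms carry $\chi$-factors and are treated by stationary phase; as explained below their contributions are absorbed into the error, so that the order-$T$ coefficient reduces to $c_{1}$ (this being $K_{a,b,c}$, resp. $\sigma_{a,b,c}$), whence $K_{a,b,c},\sigma_{a,b,c}>1$: already the solution $(1,1,1)$ contributes the summand $1$, and the further solutions — of which there are infinitely many — contribute positively.

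The secondary term is $-\int_{0}^{T}\mathcal{T}(t)\,dt$. Parametrising the one-parameter sub-families $(n_{1},n_{2},n_{3})=(k^{e},k^{f},k^{g})$ of the surface $n_{1}^{a}=n_{2}^{b}n_{3}^{c}$ (so $ae=bf+cg$) and optimising the tail exponent $(e+f+g-2)/(2e)$, one finds that when $a\mid b$ the minimiser is $(k^{b/a},k,1)$ and when $a\mid c$ it is $(k^{c/a},1,k)$, with tail $\sum_{k>(at/2\pi)^{\theta a/r}}k^{-(a+r)/2a}\asymp t^{-\theta(1/2-a/2r)}$; integrating against $dt$ yields $\int_{0}^{T}\mathcal{T}(t)\,dt=C_{a,r,s}T^{1-\theta(1/2-a/2r)}+O(T^{1-\theta/2})$, the families of the same decay (e.g. the fully mixed ones, which also have exponent $1/2-a/2r$) being folded into $C_{a,r,s}$ and the rest into the error. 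If neither $a\mid b$ nor $a\mid c$, then $(a,b,c)=1$ makes $\gcd(a,b)$ and $\gcd(a,c)$ proper divisors of $a$, hence $\le a/2$, and every tail exponent is then $\ge1/2$; consequently $\int_{0}^{T}\mathcal{T}(t)\,dt=O(T^{1-\theta/2})$ and no secondary term survives.

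It remains to absorb the off-diagonal and the $\chi$-terms into $O(T^{1-\theta/2}+T^{1/2+\theta/2}+T^{\theta/2+3a\theta/2c+\varepsilon})$. The off-diagonal of $\Sigma_{b}\Sigma_{c}$ splits by the size of $|\log(n_{2}^{b}n_{3}^{c}/n_{1}^{a})|$: where it is $\gg1$ the exponential integral is $O(1)$ and $\sum_{n_{1}\le(aT/2\pi)^{\theta}}\sum_{n_{2}\le\sqrt{bT/2\pi}}\sum_{n_{3}\le\sqrt{cT/2\pi}}(n_{1}n_{2}n_{3})^{-1/2}\ll T^{1/2+\theta/2}$; where it is $o(1)$ but nonzero — the near-coincidences — Conjecture \ref{conj11} applies, so that for $D=n_{1}^{a}-n_{2}^{b}n_{3}^{c}\ne0$ one has $|\log(n_{2}^{b}n_{3}^{c}/n_{1}^{a})|\asymp|D|n_{1}^{-a}\gg n_{1}^{-1-\varepsilon}(n_{2}n_{3})^{-1}$, whence the exponential integral is $\ll n_{1}^{1+\varepsilon}n_{2}n_{3}$, and a careful count of the near-solutions — summing over $n_{2}$, then $n_{3}$, then $n_{1}$, so as not to leak a spurious power of $T^{\theta}$ — bounds this, together with the resonant portions of the three $\chi$-terms (also controlled through Conjecture \ref{conj11}), by $O(T^{\theta/2+3a\theta/2c+\varepsilon})$; the remaining, non-resonant $\chi$-contributions and the faster-decaying tails of $\mathcal{T}$, as well as the approximate-functional-equation error, fit into $O(T^{1-\theta/2}+T^{1/2+\theta/2})$. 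The crux, and the main obstacle, is exactly this near-coincidence estimate: Conjecture \ref{conj11} is indispensable there, and one must organise the summation over $(n_{1},n_{2},n_{3})$ carefully, rather than bound term by term, to hold the exponent down to $\theta/2+3a\theta/2c$.
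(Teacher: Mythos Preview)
Your architecture matches the paper's exactly: open the two zeta factors by the approximate functional equation, split the no-$\chi$ term $I_{1,\theta}$ into diagonal and off-diagonal, extract the main and secondary terms from the diagonal, bound the off-diagonal under Conjecture~\ref{conj11}, and handle the $\chi$-terms by oscillatory-integral estimates. Two points, however, are not quite right as stated.

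First, the $\chi$-terms (the paper's $I_{2,\theta}$ and $I_{3,\theta}$) have \emph{no} resonant portion and do not require Conjecture~\ref{conj11}. The $\chi$-factor contributes $rt\log(rt/2\pi)$ to the phase, and since $b,c\ge a>2\theta a$ this dominates the Dirichlet-polynomial phase throughout the range; the derivative is monotone and bounded below, so the first/second derivative test (Titchmarsh, Lemmata~4.2, 4.4) gives $\ll T^{1/2}Q^{1/2}$ unconditionally. Only the off-diagonal of $I_{1,\theta}$ needs the conjecture. Second, your diagonal-tail argument via one-parameter subfamilies $(k^{e},k^{f},k^{g})$ is a heuristic that correctly predicts the exponent but does not prove the bound: these families do not exhaust the solutions of $n_{1}^{a}=n_{2}^{b}n_{3}^{c}$. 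In the divisibility case the paper parametrises the \emph{full} solution set as $n_{1}=r_{2}^{r/a}r_{3}^{s}$, $n_{\{2\ \text{or}\ 3\}}=r_{2}$, $n_{\{3\ \text{or}\ 2\}}=r_{3}^{a}$, sums over $r_{2}$ first to extract the $Q^{-1/2+a/2r}$ power, and then sums over $r_{3}$ to produce the $\zeta(as/2r+a/2)$ factor in $C_{a,r,s}$; your ``folding in'' of other families is precisely this $r_{3}$-sum, not a separate collection of one-parameter curves. In the no-divisibility case the paper needs a considerably more elaborate parametrisation (Proposition~\ref{lem602z}) with auxiliary square-free parameters $d_{j}$ recording how prime powers in $n_{2}$ and $n_{3}$ must conspire to make $n_{2}^{b}n_{3}^{c}$ a perfect $a$-th power; the inequality $(a,b),(a,c)\le a/2$ enters there, but only after this structure is in place, and your claim that ``every tail exponent is $\ge 1/2$'' does not follow from looking at monomial curves alone.
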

The topic of twisted moments was introduced inter alia by Levinson \cite{Lev} for the purpose of providing a lower bound for the proportion of the zeros of the Riemann-zeta function on the critical line. An asymptotic formula for (\ref{ostti}) when $k=1$ was obtained by Balasubramanian et al. \cite{Bala} when $\vartheta=1/2-\varepsilon$. Levinson's ideas were exploited in the paper of Conrey \cite{Con} by extending the length of the polynomial to $T^{4/7-\varepsilon}$ for a specific choice of the coefficients to the end of showing that the aforementioned proportion is at least $2/5$, thereby improving Levinson's original result.

Much in the same vein, investigations pertaining to the twisted fourth moment were initiated by Deshouillers and Iwaniec \cite{Iwa}, wherein they provided upper bounds for (\ref{ostti}) for the choice $k=2$ with $\vartheta=1/5-\varepsilon.$ Precise asymptotics for the above object comprising lower order terms were delivered in the paper of Young and Hughes \cite{You} for $\vartheta=1/11-\varepsilon$, such a parameter being superseded by $\vartheta=1/4-\varepsilon$ in the work of Bettin et al. \cite{Bett}.

The results earlier mentioned established conclusions for general coefficients $a_{n}$, our analysis in contrast being confined to particular choices. We draw the reader's attention to (\ref{Mkk}) and raise the question of whether analogous formulae in similar settings but with lower order terms substantially smaller occur, examinations investigating the validity of such rare formulae being found, inter alia, in the work concerning moments of quadratic Dirichlet $L$-functions.  The sharpening of \cite{Soun}, namely \begin{equation}\label{MRD}\mathop{{\sum_{0<d<D}}^*}L\Big(\frac{1}{2},\chi_{d}\Big)^{3}=DP_{6}(\log D)+O(D^{\theta})\ \ \ \ \ \end{equation} for some fixed $\theta<1$, wherein the above sum is over fundamental discriminants and $P_{6}(x)$ is a degree $6$ polynomial,  was accomplished by Diaconu, Goldfeld and Hoffstein \cite{Dia}, the existence of a lower order term of the shape $cD^{3/4}$ being further conjectured in the aforementioned memoir. This speculation was essentially confirmed in the work of Diaconu and Whitehead \cite{Dia4} at the cost of introducing a weight function, thereby providing such a lower term but only for a smooth version of (\ref{MRD}). We refer to \cite{Dia5} for generalisations to higher moments. 

We observe that Theorem \ref{thmabcy} thereby incorporates a new collection of such bizarre formulae, it being a noteworthy feature that the secondary terms stemming from the diagonal contribution are particularly large when either $(a,b)$ or $(a,c)$ are relatively big, and exceed the $T^{1-\theta/2}$ barrier when at least one of the divisibility relations holds. 

In the course of the proof, we shall make use of the approximate functional equation of the Riemman zeta function (see Titchmarsh \cite[(4.12.4)]{Tit}), namely
\begin{equation}\label{approxim}\zeta(1/2+it)=D_{1/2}(1/2+it)+\chi(1/2+it)D_{1/2}(1/2-it)+O(t^{-1/4}),\end{equation}
wherein we recall (\ref{diri}) and \begin{equation}\label{chichi}\chi(s)=2^{s-1}\pi^{s}\sec(s\pi/2)\Gamma(s)^{-1},\ \ \ \ \ \ \ \ \ \ s\in\mathbb{C}\setminus (2\mathbb{Z}+1),\end{equation} thereby reducing our task to that of examining several integrals of twisted Dirichlet polynomials. We shall be confronted as in \cite{Pli} with the problem of understanding the number of solutions of the equation
\begin{equation}\label{D1D2}n_{2}^{b}n_{3}^{c}-n_{1}^{a}=D\end{equation} for $D\in\mathbb{Z}$ stemming from the off-diagonal contribution. Considerations of similar problems suggesting that lower bounds for the difference $D$ with the current techniques available (see \cite{Pli}) are of the shape $\lvert D\rvert\gg (\log n_{1})^{\delta}$ for some $\delta>0$, it should not come as a surprise that in this new setting, the validity of Conjecture \ref{conj11} must be assumed in order the aforementioned secondary term, it in turn stemming from the diagonal contribution, be larger than the corresponding error term.

The relative simplicity of the off-diagonal contribution when $a=1$ permits one to refine the analysis pertaining to the general setting. We thus deduce unconditional asymptotic formulae comprising two lower order terms at the cost of accomplishing a prolix discussion involving an application of Perron's formula in conjunction with moment estimates to the end of deriving error terms of the requisite precision, the elementary methods utilised in the proof of Theorem \ref{thmabcy} failing to detect the second lower order term. 
\begin{thm}\label{prop3k4}
Let $b,c\in\mathbb{N}$ satisfying $1<c<b$. Let $\theta\in\mathbb{R}$ be a fixed parameter satisfying $0<\theta<1$. Then one has
$$M_{1,b,c}^{\theta}(T)=K_{1,b,c}T-C_{1,c,b}T^{1-\theta(1/2-1/2c)}-C_{1,b,c}T^{1-\theta(1/2-1/2b)}+O\big(T^{1-\theta/2}(\log T)^{2}+T^{1/2+\theta/2}\big).$$ If on the contrary $b=c>1$ then instead
$$M_{1,b,b}^{\theta}(T)=K_{1,b,b}T-H_{b}(\log T)T^{1-\theta(1/2-1/2b)}+O\big(T^{1-\theta/2}(\log T)^{2}+T^{1/2+\theta/2}\big),$$ the polynomial $H_{b}(x)$ being defined in (\ref{kris}). 
\end{thm}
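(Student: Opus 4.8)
The plan is to begin, as indicated after \eqref{approxim}, by inserting the approximate functional equation for $\zeta(1/2-bit)$ and $\zeta(1/2-cit)$ into the definition \eqref{prrosa} of $M_{1,b,c}^{\theta}(T)$ (with $a=1$, so $D_{\theta}(1/2+it)=\sum_{n\le(t/2\pi)^{\theta}}n^{-1/2-it}$ appearing directly). Expanding the product yields a main bracketed term together with cross terms carrying factors $\chi(1/2-bit)$ or $\chi(1/2-cit)$; the $O(t^{-1/4})$ remainders contribute acceptably to the error after pairing with the remaining polynomials and trivially bounding. The resulting integrals split into a \emph{diagonal} part, where the oscillatory factor $(m_1^{?}m_2^{?}m_3^{?})^{it}$ is identically one, and an \emph{off-diagonal} part governed by the Diophantine equation \eqref{D1D2} with $a=1$, i.e.\ $n_2^{b}n_3^{c}-n_1=D$. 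The crucial simplification when $a=1$ is that $n_1$ is then \emph{free}: for every choice of $n_2,n_3$ the value $n_1=n_2^{b}n_3^{c}-D$ is automatically an integer, so no $abc$-type input is needed and the off-diagonal sums can be handled unconditionally by elementary estimates (geometric-series / partial-summation bounds on $\sum n^{-1/2\pm it}$ together with the standard bound $\chi(1/2+it)\ll t^{-1/2}$ and $\chi'/\chi \asymp \log t$), producing only contributions absorbed into $O(T^{1/2+\theta/2})$.

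The heart of the matter is the diagonal contribution. Here one collects all triples with $n_2^{b}n_3^{c}=n_1$ (from the $D_{\theta}$-with-$D_{1/2}$-with-$D_{1/2}$ term) and, from the terms with one $\chi$-factor, triples with $n_1 n_3^{c}=n_2^{b}$ or $n_1 n_2^{b}=n_3^{c}$; each such term contributes $\int_0^T(\text{smooth})\,dt$ after the oscillation cancels. The dominant piece $n_1=n_2^{b}n_3^{c}$ with the length constraint $n_1\le(t/2\pi)^{\theta}$ is what produces both the linear term $K_{1,b,c}T$ and the subtracted power-saving terms: writing $\sum_{n_2^{b}n_3^{c}\le(t/2\pi)^{\theta}}(n_2^{b}n_3^{c})^{-1}$ and completing each of the two sums over $n_2$ and over $n_3$ to infinity generates a completed main term (giving $K_{1,b,c}$ after integrating in $t$) minus two tail corrections $\sum_{n_2^{b}>(t/2\pi)^{\theta}}\cdots$ and $\sum_{n_3^{c}>(t/2\pi)^{\theta}}\cdots$. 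Each tail, by partial summation, behaves like a constant times $(t/2\pi)^{-\theta(1/2-1/(2c))}$ respectively $(t/2\pi)^{-\theta(1/2-1/(2b))}$; integrating $t$ from $0$ to $T$ then yields precisely the terms $-C_{1,c,b}T^{1-\theta(1/2-1/2c)}$ and $-C_{1,b,c}T^{1-\theta(1/2-1/2b)}$, with the constants $C_{1,c,b},C_{1,b,c}$ defined via the relevant $\zeta$-values. When $b=c$ the two cutoff conditions $n_2^{b}>X$ and $n_3^{b}>X$ coincide combinatorially and the double-tail sum over pairs $(n_2,n_3)$ with $\max(n_2,n_3)^{b}>X$ acquires an extra logarithmic factor from the diagonal $n_2\asymp n_3$ range; carrying the partial summation through with this degeneracy replaces the single constant by the polynomial $H_b(\log T)$ of \eqref{kris}, which is the promised merging of the two lower-order terms.

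To reach the stated error term $O\big(T^{1-\theta/2}(\log T)^{2}+T^{1/2+\theta/2}\big)$ one must, as flagged in the paragraph preceding the theorem, do better than the crude estimation of Theorem~\ref{thmabcy}. The device is to write the $n_1$-sum $D_{\theta}(1/2+it)=\sum_{n_1\le(t/2\pi)^{\theta}}n_1^{-1/2-it}$ via Perron's formula, shifting the contour to pick up the pole contribution (the true main and secondary terms) and bounding the shifted integral by invoking moment estimates for $\zeta$ and for the short Dirichlet polynomials on vertical lines — this is where the factor $(\log T)^2$ enters and where the $T^{1-\theta/2}$ error is pinned down rather than the weaker $T^{1-\theta/2}$-with-loss-of-an-$\varepsilon$ that the elementary argument would give. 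I expect the \textbf{main obstacle} to be precisely this step: organising the Perron-shift so that the secondary main terms are extracted cleanly while the remaining contour and truncation errors are controlled to the claimed power of $\log T$, and simultaneously keeping the off-diagonal error at $T^{1/2+\theta/2}$ without an extra $T^{\varepsilon}$. The case $b=c$ adds the further bookkeeping burden of tracking the coefficients of $H_b$ through the completed-sum/partial-summation computation, but that is routine once the $b\ne c$ case is in place.
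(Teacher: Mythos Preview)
Your overall architecture matches the paper: Lemma~\ref{lemita601} reduces $M_{1,b,c}^{\theta}$ to $I_{1,\theta}+I_{2,\theta}+I_{3,\theta}$; the cross terms $I_{2,\theta},I_{3,\theta}$ are bounded by $O(T^{1/2}Q^{1/2})$ via oscillatory-integral lemmas (Lemma~\ref{lem604}) with no diagonal extraction from them at all; $I_{1,\theta}$ splits as diagonal $J_{1}^{\theta}$ plus off-diagonal $J_{2}^{\theta}$, and for $a=1$ the latter is $\ll T^{1/2}Q^{1/2}+T^{1/2+1/2c}$ unconditionally (Lemma~\ref{prop478}), absorbed into the stated error. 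Your identification of ``$n_{1}$ is free'' as the key simplification is exactly right.

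Where your plan departs from the paper is in the handling of the diagonal and the placement of Perron's formula. First, the diagonal weight after imposing $n_{1}=n_{2}^{b}n_{3}^{c}$ is $n_{2}^{-(1+b)/2}n_{3}^{-(1+c)/2}$, not $(n_{2}^{b}n_{3}^{c})^{-1}$. Second, the paper does \emph{not} split the tail into two pieces $\{n_{2}^{b}>X\}$ and $\{n_{3}^{c}>X\}$ (that is not the complement of $\{n_{2}^{b}n_{3}^{c}\le X\}$), nor does it apply Perron to $D_{\theta}(1/2+it)$ inside the $t$-integral. Instead it writes $J_{1}^{\theta}(T)=\sigma_{1,b,c}T-J_{3}^{\theta}(T)-J_{4}^{\theta}(T)$ as in \eqref{J1J9}, where $J_{3}^{\theta}=T\sum_{r_{2}^{b}r_{3}^{c}\ge Q}r_{2}^{-(1+b)/2}r_{3}^{-(1+c)/2}$ and $J_{4}^{\theta}$ are purely arithmetic sums in the single parameter $Q=(T/2\pi)^{\theta}$, and then applies Perron to \emph{these sums} with generating series $G_{b,c}(s)=\zeta(bs+(1+b)/2)\zeta(cs+(1+c)/2)$ (Proposition~\ref{prop3k} and Lemma~\ref{lemJ4}). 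Shifting the contour to $\mathrm{Re}(s)=-1/2$ picks up \emph{both} secondary terms simultaneously as residues at $s=(1-c)/(2c)$ and $s=(1-b)/(2b)$; the integral on the shifted line is bounded by $Q^{-1/2}(\log Q)^{2}$ via the second moment of $\zeta$ (Lemma~\ref{lem6.1}). One further device you omit: the Perron truncation error is controlled only after invoking Lemma~\ref{priros} to translate $T$ so that $\|Q\|\gg 1$ (the remark before \eqref{fracc}). Your proposed route of Perron-on-$D_{\theta}(1/2+it)$ entangles the contour variable with the $t$-dependent cutoff $(t/2\pi)^{\theta}$ and is not what makes the error $T^{1-\theta/2}(\log T)^{2}$ come out cleanly here.
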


Similarly, one may derive in the above context an unconditional asymptotic formula comprising several lower order terms in the spirit of Theorem \ref{thmabcy} for $M_{1,b,1}^{\theta}(T)$. 
\begin{thm}\label{ppop}
Let $b>1$ and let $\theta\in\mathbb{R}$ be a fixed parameter satisfying $0<\theta\leq 1/2$. Then with the above notation one has that
$$M_{1,b,1}^{\theta}(T)=TK_{b}(\log T)-C_{1,b,1}T^{1-\theta(1/2-1/2b)}+O\big(T^{1-\theta/2}(\log T)^{2}\big),$$ wherein $K_{b}(x)$ is a linear polynomial that shall be defined in (\ref{Kb}). If instead $b=1$ then 
$$M_{1,1,1}^{\theta}(T)=\frac{\theta^{2}}{2}T(\log T)^{2}+c_{1}T\log T+c_{0}T+O\big(T^{1-\theta/2}(\log T)^{2}\big),$$ the constants $c_{0},c_{1}\in\mathbb{R}$ being defined in (\ref{c1c2c3e}).
\end{thm}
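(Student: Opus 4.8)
The plan is to begin exactly as in the announced proof of Theorem \ref{thmabcy}: substitute the approximate functional equation (\ref{approxim}) for each of the two factors $\zeta(1/2-bit)$ and $\zeta(1/2-it)$ into the integral (\ref{prrosa}) defining $M_{1,b,1}^{\theta}(T)$, expand the product, and discard the error contributions from the $O(t^{-1/4})$ tails, which are easily seen to be absorbed into $O(T^{1-\theta/2})$ after trivially bounding the surviving Dirichlet polynomials. One is thereby reduced to a finite linear combination of integrals of the form $\int_0^T E(t)\, n_1^{it}\, n_2^{-bit}\, n_3^{-it}\,dt$ (with possible factors $\chi(1/2\pm bit)$ or $\chi(1/2\pm it)$), where $n_1\le (t/2\pi)^\theta$ and $n_2,n_3\le (t/2\pi)^{1/2}$. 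Since $a=1$, the underlying diophantine equation (\ref{D1D2}) degenerates to the \emph{linear} relation $n_2^{b}n_3 - n_1 = D$, and this is the essential simplification: for each fixed pair $(n_2,n_3)$ there is at most one admissible $n_1$ with $D=0$, so the off-diagonal terms $D\neq 0$ carry a genuine oscillatory factor $(1+D/n_1)^{it}$ or $\chi(\cdots)$ that can be integrated by parts / estimated by stationary phase without any arithmetic input, contributing only to the error. This is where the unconditionality comes from: no lower bound on $|D|$ is needed because the $n_1$-variable is free rather than being an $a$-th power.

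Next I would isolate the diagonal $n_1 = n_2^{b} n_3$. For $b>1$ this forces, via the constraint $n_1 \le (t/2\pi)^\theta$ together with $n_2,n_3 \le (t/2\pi)^{1/2}$, a short nested sum whose main term is $\int_0^T \big(\sum_{n_2^{b} n_3 \le (t/2\pi)^\theta} 1\big)\, \frac{dt}{\text{(something of size }1)}$ — more precisely, the diagonal from the pair (first $D_{1/2}$-term of $\zeta(1/2-bit)$, first term of $\zeta(1/2-it)$) paired against $D_\theta(1/2+it)$ produces the leading term, while the cross terms involving one $\chi$-factor produce, after evaluating $\sum_{n \le X} 1 = X + O(1)$ and integrating, the secondary term of size $T^{1-\theta(1/2 - 1/2b)}$ with constant $C_{1,b,1}$. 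The linear polynomial $K_b(\log T)$ arises because the principal diagonal sum $\sum_{n_2^b n_3 \le Y} 1 \sim c\, Y$ has a genuine $\log$ in the second-order asymptotics only in the degenerate range, but the $\int_0^T$ of the main density term against the logarithmic length of $D_\theta$ is what yields degree $1$ in $\log T$; one pins down $K_b$ and the constant in (\ref{Kb}) by carefully tracking these elementary sums. For the range $\theta \le 1/2$ the condition in the hypothesis guarantees $n_1$ and the $\zeta$-lengths do not interfere, keeping the off-diagonal error under control at $O(T^{1-\theta/2}(\log T)^2)$; I would not expect to need Perron's formula here, the elementary treatment of Theorem \ref{thmabcy} sufficing because only \emph{one} secondary term is claimed.

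The case $b=1$ is structurally different and is the main obstacle: now $n_2^b n_3 = n_2 n_3$ is symmetric and the diagonal $n_1 = n_2 n_3$ with all three variables of comparable (logarithmic-scale) size produces a \emph{quadratic} polynomial in $\log T$. Concretely, the leading diagonal becomes $\int_0^T \#\{(n_2,n_3): n_2 n_3 \le (t/2\pi)^\theta\}\,dt$-type expressions together with the two cross terms where $\chi(1/2-it)$ (resp. its conjugate) appears, and $\#\{(n_2,n_3): n_2 n_3 \le Y\} = Y\log Y + (2\gamma - 1)Y + O(Y^{1/2})$ by the Dirichlet divisor problem. Substituting $Y = (t/2\pi)^\theta$, expanding $\log Y = \theta \log(t/2\pi)$, and integrating in $t$ from $0$ to $T$ yields precisely $\tfrac{\theta^2}{2} T(\log T)^2 + c_1 T\log T + c_0 T$ after collecting the $\int_0^T (\log t)^2\,dt = T(\log T)^2 + O(T\log T)$ and lower-order antiderivatives; the constants $c_0, c_1$ in (\ref{c1c2c3e}) come from the $2\gamma-1$ term, the boundary corrections in the integration by parts of $\int (\log t)^j\,dt$, and the contribution of the $\chi$-cross terms (which, since $b=c=1$, contribute at the same order rather than being demoted to a secondary $T^{1-\theta/\cdots}$). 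The delicate point is bookkeeping: verifying that the three pairings (no $\chi$; one $\chi(1/2-it)$; one $\overline{\chi}$) each contribute a full quadratic and that their sum has the stated leading coefficient $\theta^2/2$ rather than some other multiple, and that the divisor-problem error $O(Y^{1/2}) = O(T^{\theta/2})$ together with all genuinely off-diagonal ($n_2 n_3 \ne n_1$) oscillatory integrals is majorised by $O(T^{1-\theta/2}(\log T)^2)$ for the full range $0 < \theta \le 1/2$. I would organise this by treating $b=1$ as a limiting instance of the $b>1$ computation, replacing "at most one $n_1$" counts by divisor sums throughout, and reserving the final paragraph for the explicit determination of $K_b$ and of $c_0,c_1$.
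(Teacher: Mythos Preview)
Your proposal contains two concrete errors that would prevent the argument from going through as written.

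First, the diagonal sum is misidentified. After expanding the product $D_\theta(1/2+it)D(1/2-bit)D(1/2-it)$, each term carries the weight $(n_1n_2n_3)^{-1/2}$, so that on the diagonal $n_1=n_2^bn_3$ the contribution is $n_2^{-(b+1)/2}n_3^{-1}$. The relevant sum is therefore
\[
S_b^{\theta}(T)=\sum_{n_2^bn_3\le Q}n_2^{-(b+1)/2}n_3^{-1},
\]
not the unweighted count $\#\{(n_2,n_3):n_2^bn_3\le Y\}$ that you invoke. For $b=1$ this becomes $\sum_{n_2n_3\le Q}(n_2n_3)^{-1}=\sum_{n\le Q}d(n)/n$, whose asymptotic is $\tfrac12(\log Q)^2+2\gamma\log Q+(\gamma^2+2\gamma_1)+O(Q^{-1/2}\log Q)$, not the Dirichlet divisor formula $Y\log Y+(2\gamma-1)Y+O(Y^{1/2})$ you quote. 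Note also that under your reading the ``error'' $Y^{1/2}=t^{\theta/2}$ would integrate to $T^{1+\theta/2}$, which swamps the main term.

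Second, the secondary term $C_{1,b,1}T^{1-\theta(1/2-1/2b)}$ does \emph{not} come from the $\chi$-cross pieces. In the paper those integrals ($I_{2,\theta}$ and $I_{3,\theta}$) are bounded in Lemma~\ref{lem604} by $O(T^{1/2}Q^{1/2})$ via stationary phase and contribute only to the error. The secondary term arises entirely inside $I_{1,\theta}$: it is the combined contribution of the tail of $S_b^{\theta}(T)$ beyond its leading asymptotic together with the boundary sum $J_4^{\theta}(T)=\sum_{\bfn} N_{\bfn,\theta}P_{\bfn}^{-1/2}$ (cf.\ (\ref{0.1212})). Because the elementary error in, say, $\sum_{m\le X}m^{-1}=\log X+\gamma+O(X^{-1})$ propagates to exactly the order $Q^{-1/2+1/2b}$ of the secondary term, one cannot simply discard it; the paper instead applies Perron's formula to both $S_b^{\theta}(T)$ and $J_4^{\theta}(T)$, shifts the contour to $\text{Re}(s)=-1/2$, reads off the residues at the poles of $\zeta(bs+\tfrac{b+1}{2})\zeta(s+1)$, and controls the remaining integral on the critical line via the second-moment bound (Lemma~\ref{lem6.1}). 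Your assertion that ``the elementary treatment of Theorem~\ref{thmabcy} suffic[es]'' is precisely what the paper flags as inadequate in the introduction.
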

We also find it worth introducing as in (\ref{ostti}) the integral
$$M_{a,a,b,c}^{\theta}(T)=\int_{0}^{T}\lvert D_{\theta}(1/2+ait)\rvert^{2}\zeta(1/2-bit)\zeta(1/2-cit)dt.$$
We shall incorporate to the above sequel a similar asymptotic evaluation of $M_{1,1,b,c}^{\theta}(T)$ by employing manoeuvres within the same circle of ideas, widening the range of $\theta$ for which one may secure the validity of such an evaluation not being the focus of this memoir. 

\begin{thm}\label{thm10}
Let $b,c\in\mathbb{N}$ satisfying $1<c<b$. Let $\theta\in\mathbb{R}$ be a fixed parameter satisfying $0<\theta\leq 1/2$. Then one has that
\begin{align*}M_{1,1,b,c}^{\theta}(T)=&TK_{b,c}(\log T)-C_{1,c,b}'T^{1-\theta(1/2-1/2c)}-C_{1,b,c}'T^{1-\theta(1/2-1/2b)}
\\
&+O\big(T^{1-\theta/2}(\log T)^{13/4}+T^{1/2+\theta}\big),
\end{align*}the constants $C_{1,b,c}',C_{1,c,b}'$ and the linear polynomial $K_{b,c}(x)$ being defined in (\ref{abf}) and (\ref{linepo}) respectively. If instead $b=c>1$ then it transpires that
\begin{align*}M_{1,1,b,b}^{\theta}(T)=&TK_{b,b}(\log T)-G_{b}(\log T)T^{1-\theta(1/2-1/2b)}
\\
&+O\big(T^{1-\theta/2}(\log T)^{13/4}+T^{1/2+\theta}\big),
\end{align*}
wherein $G_{b}(x)$ is a linear polynomial that shall be defined in (\ref{Polyn}).
\end{thm}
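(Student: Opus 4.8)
The plan is to insert the approximate functional equation (\ref{approxim}), conjugated and with $t$ replaced by $bt$ and by $ct$, for both $\zeta(1/2-bit)$ and $\zeta(1/2-cit)$ — so that $\zeta(1/2-bit)=D_{1/2}(1/2-bit)+\chi(1/2-bit)D_{1/2}(1/2+bit)+O((bt)^{-1/4})$, and likewise for $c$ — and to write $\lvert D_\theta(1/2+it)\rvert^2=D_\theta(1/2+it)\overline{D_\theta(1/2+it)}$. This reduces $M_{1,1,b,c}^\theta(T)$ to the principal integral
\[
I_0(T)=\int_0^T\lvert D_\theta(1/2+it)\rvert^2\,D_{1/2}(1/2-bit)\,D_{1/2}(1/2-cit)\,dt,
\]
three companion integrals in which at least one zeta is replaced by its $\chi$-twisted dual $\chi(1/2-bit)D_{1/2}(1/2+bit)$, and error integrals carrying the $O(t^{-1/4})$ remainders. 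I would first dispose of the last two families. The remainder$\times$remainder piece is $O\big(\int_0^T t^{-1/2}\lvert D_\theta(1/2+it)\rvert^2\,dt\big)=O(T^{1/2+\theta})$ via the pointwise bound $\lvert D_\theta(1/2+it)\rvert\ll t^{\theta/2}$; the (principal or dual)$\times$remainder pieces should not be estimated absolutely — that would cost a power $T^{3/4+\theta}$ — but rather by Cauchy--Schwarz together with the fourth moment $\int_0^T\lvert D_\theta(1/2+it)\rvert^4\,dt\ll T(\log T)^{O(1)}$ (available thanks to the $t$-dependent cutoff in $D_\theta$) and the second moment $\int_0^T\lvert D_{1/2}(1/2-bit)\rvert^2\,dt\ll T\log T$, which confines them to $O\big(T^{3/4}(\log T)^{O(1)}\big)$, hence, for $\theta\le 1/2$, to the quoted error. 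For the companion integrals I would apply stationary phase in $t$ to the factor $\chi(1/2-cit)$ (resp. $\chi(1/2-bit)$), whose phase is $\asymp ct\log(ct/2\pi)$: the length constraint on the dual polynomial forces all summation variables to be so small that the stationary equation $(ct/2\pi)^c=Nk^c/(Mh^b)$ has no root with $t_0$ larger than an absolute constant, so that on $[C,T]$ the total phase has a monotone derivative of size $\gg\log t$; repeated integration by parts followed by summation over the admissible tuples then bounds each companion integral by $O(T^{1/2+\theta+\e})$.

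The crux is $I_0(T)$, which after expanding the three Dirichlet polynomials takes the form
\[
I_0(T)=\int_0^T\sum_{\substack{M,N\le(t/2\pi)^\theta\\ h\le(bt/2\pi)^{1/2},\ k\le(ct/2\pi)^{1/2}}}\frac{1}{\sqrt{MNhk}}\Big(\frac{Mh^bk^c}{N}\Big)^{it}\,dt.
\]
Here the hypothesis $a=1$ is exactly what renders the argument unconditional: the equation (\ref{D1D2}) underlying the off-diagonal degenerates to $Mh^bk^c-N=D$ with $N$ essentially determined by $(M,h,k,D)$, carrying no serious arithmetic. Performing the $t$-integration, the diagonal $N=Mh^bk^c$ contributes $T\sum_{Mh^bk^c\le(T/2\pi)^\theta}(M^2h^{b+1}k^{c+1})^{-1/2}$ up to a manageable boundary term; the $M$-summation introduces a factor $\log$, and a short computation — in which the $\log$-weight attached to the tail of the (convergent, since $b,c>1$) sum $\sum_{h,k\ge1}h^{-(b+1)/2}k^{-(c+1)/2}$ cancels against the $\log(h^bk^c)$ produced by the $M$-sum — yields the main term $TK_{b,c}(\log T)$, a linear polynomial in $\log T$. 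The secondary terms $-C_{1,c,b}'T^{1-\theta(1/2-1/2c)}-C_{1,b,c}'T^{1-\theta(1/2-1/2b)}$ arise from the discrepancy between $\sum_{h^bk^c\le(t/2\pi)^\theta}$ and $\sum_{h,k\ge1}$ of $h^{-(b+1)/2}k^{-(c+1)/2}$, whose dominant contributions sit near $k\asymp(t/2\pi)^{\theta/c}$ and $h\asymp(t/2\pi)^{\theta/b}$ respectively; the coarser of the two, with exponent $1-\theta(1/2-1/2c)$ since $c<b$, is accessible by the elementary device behind Theorem \ref{thmabcy}, while the finer one is, as in \cite{Pli}, extracted only once the off-diagonal is \emph{evaluated}, not merely bounded, by Perron's formula — the $N$-summation being opened into a contour integral involving $\zeta$ and the contour then moved past $w=0$ and the pole of $\zeta$. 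The leftover of that contour integral, the genuine off-diagonal terms, and the remainder and companion contributions from the first paragraph all amalgamate into $O\big(T^{1-\theta/2}(\log T)^{13/4}+T^{1/2+\theta}\big)$, the logarithmic losses stemming from the divisor sum $\sum_M 1/M$ and from the moment estimates invoked along the Perron step.

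When $b=c$ the two secondary exponents coincide at $1-\theta(1/2-1/2b)$ and the two discrepancy sums merge into a single one whose governing inner sum is $\sum_{h\le(t/2\pi)^{\theta/b}}h^{-1}$, lying exactly on the boundary of convergence and thereby manufacturing an extra $\log$; hence the secondary term is upgraded to $-G_b(\log T)T^{1-\theta(1/2-1/2b)}$ with $G_b(x)$ linear, in complete parallel with $H_b(x)$ of Theorem \ref{prop3k4}. Reading $C_{1,b,c}',C_{1,c,b}'$, $K_{b,c}(x)$ and $G_b(x)$ off the residues and tail constants is then routine bookkeeping. The principal obstacle, as the introduction itself flags, will be to carry out the Perron/contour evaluation of the off-diagonal while the truncations of $D_\theta$ and of the $\zeta$-polynomials are genuinely $t$-dependent, and to do so sharply enough to hold the error at $T^{1-\theta/2}$ rather than the cruder $T^{3/4+\theta}$ afforded by absolute bounds; this is precisely where the fourth moment of $D_\theta$ and the near-orthogonality of the Dirichlet polynomials must be brought to bear, and where the bulk of the work lies.
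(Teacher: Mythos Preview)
Your scaffold matches the paper: Lemma~\ref{lemita601} for the approximate-functional-equation reduction (with the fourth moment of $D_\theta$ and Cauchy--Schwarz controlling the $t^{-1/4}$ remainders), Lemma~\ref{lem604} for the $\chi$-twisted companion integrals, and the diagonal/off-diagonal split of the principal integral. The misstep is your claim that the finer secondary term is ``extracted only once the off-diagonal is \emph{evaluated}, not merely bounded, by Perron's formula''. In the paper the off-diagonal $J_{2,1}^\theta(T)$ is never evaluated: it is disposed of by exactly the elementary sum over $D=Mh^bk^c-N$ that you yourself describe, giving $J_{2,1}^\theta(T)\ll T^{1/2}Q=O(T^{1/2+\theta})$ (Lemma~\ref{lemi12}), and this is precisely the second error term in the statement. \emph{Both} secondary terms come entirely from the diagonal. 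One applies Perron to $S_{b,c}^\theta(T)=\sum_{r_2^br_3^cr_4\le Q}r_2^{-(1+b)/2}r_3^{-(1+c)/2}r_4^{-1}$ and to its $N_{\bfn,n_4,\theta}$-weighted companion $J_{4,1}^\theta(T)$, obtaining the integrand $G_{1,b,c}(s)=\zeta(s+1)\zeta\big(bs+\tfrac{1+b}{2}\big)\zeta\big(cs+\tfrac{1+c}{2}\big)$, and shifts the contour to $\mathrm{Re}(s)=-1/2$; the double pole at $s=0$ yields $K_{1,b,c}\log Q+B_{b,c}$, while the simple poles at $s=-\tfrac12+\tfrac{1}{2c}$ and $s=-\tfrac12+\tfrac{1}{2b}$ produce the two secondary terms simultaneously. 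Opening the off-diagonal $N$-sum into a $\zeta$-integral would not generate these residues and is unnecessary.

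Two smaller corrections. The exponent $13/4$ in the error does not come from a divisor sum but from bounding the shifted integral $L_{1,b,c}(Q)$ on $\mathrm{Re}(s)=-1/2$ via the third-moment estimate $\int_0^T|\zeta(1/2+it)|^3\,dt\ll T(\log T)^{9/4}$ (Lemma~\ref{lem6.1}); integration by parts against $dt/t$ over $[1,Q^2]$ then gives $Q^{-1/2}(\log Q)^{13/4}$. And before any of this, the paper invokes Lemma~\ref{priros} to translate $T$ by $O(TQ^{-1})$ so as to assume $\|Q\|\gg 1$, which is what keeps the Perron truncation error at $O(TQ^{-1}(\log Q)^2)$ rather than something cruder.
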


We also note that one may derive analogous formulae both for $M_{1,1,b,1}^{\theta}(T)$ with $b>1$ and $b=1$, the linear polynomial $K_{b,c}(\log T)$ in the above formula being replaced by a quadratic and cubic one respectively. One may also obtain similar formulae to those of the latter theorem for the mixed third moment twisted by the square modulus of a Dirichlet polynomial by arguments within the scope of the methods utilised in the present work and with the aid of modern technology of the strength of that of Bettin et al. \cite{Bett} in the interest of surmounting certain technical difficulties.

The exposition of ideas is organised as follows. We begin our journey by presenting some preliminary lemmata in Section \ref{preliminary}. Sections \ref{offdiagonal} and \ref{diagonal} are then devoted to the analysis when $a\geq 2$ of the off-diagonal and diagonal contribution, the case $a=1$ being treated independently in Section \ref{case1}. In Section \ref{simple} we routinarily bound oscillatory integrals involved in the formula at hand and complete the proofs of Theorems \ref{thmabcy} and \ref{prop3k4}. Theorem \ref{ppop} is then discussed and proved in Section \ref{contour}. The memoir concludes in Section \ref{twist} with an analysis and a proof of Theorem \ref{thm10}.

\emph{Notation}: As is customary in number theory, we write $p^{r}|| n$ to denote that $p^{r}| n$ but $p^{r+1}\nmid n.$ Whenever $\varepsilon$ appears in any bound, it will mean that the bound holds for every $\varepsilon>0$, though the implicit constant then may depend on $\varepsilon$. We use $\ll$ and $\gg$ to denote Vinogradov's notation, and write $f\asymp g$ whenever $f\ll g$ and $f\gg g$.

\emph{Acknowledgements}: The author's work was initiated during his visit to Purdue University under Trevor Wooley's supervision and finished at KTH while being supported by the G\"oran Gustafsson Foundation. The author would like to thank him for his guidance and useful comments, Lilian Matthiesen for helpful conversations and both Purdue University and KTH for their support and hospitality, and to acknowledge the activities supported by the NSF Grant DMS-1854398.

\section{Preliminary manoeuvres}\label{preliminary}
We start by recalling the following standard result concerning the asymptotic evaluation of the function $\chi(s)$ defined in (\ref{chichi}). 
\begin{lem}\label{lem601}
Let $t>0$. One then has
$$\chi(1/2+it)=\Big(\frac{2\pi}{t}\Big)^{it}e^{it+i\pi/4}\Big(1+O\Big(\frac{1}{t}\Big)\Big),\ \ \ \ \chi(1/2-it)=\Big(\frac{2\pi}{t}\Big)^{-it}e^{-it-i\pi/4}\Big(1+O\Big(\frac{1}{t}\Big)\Big)$$ as $t\to\infty$.
\end{lem}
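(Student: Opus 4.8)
The plan is to derive both asymptotic formulae from the classical Stirling-type expansion of the Gamma function together with standard manipulations of the functional-equation factor. Starting from the definition (\ref{chichi}), namely $\chi(s) = 2^{s-1}\pi^{s}\sec(s\pi/2)\Gamma(s)^{-1}$, I would first rewrite $\chi(1/2+it)$ by using the reflection formula $\Gamma(s)\Gamma(1-s) = \pi/\sin(\pi s)$ to trade the factor $\sec(s\pi/2)\Gamma(s)^{-1}$ for a ratio of Gamma functions evaluated at $1/2 \pm it/2$ (this is the customary route, e.g. the identity $\chi(s) = \pi^{s-1/2}\Gamma((1-s)/2)/\Gamma(s/2)$). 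This recasts the problem as understanding $\Gamma((1/4 - it/2))/\Gamma((1/4 + it/2))$ as $t \to \infty$.

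Next I would apply Stirling's formula in the form $\log\Gamma(z) = (z - 1/2)\log z - z + \tfrac12\log(2\pi) + O(|z|^{-1})$, valid uniformly in the right half-plane away from the real axis, to each of the two Gamma factors with $z = 1/4 \pm it/2$. Taking the difference of the two logarithms, the main contributions combine: the $\tfrac12\log(2\pi)$ terms cancel, the $-z$ terms contribute a purely imaginary multiple of $t$, and the $(z-1/2)\log z$ terms, after writing $\log(1/4 \pm it/2) = \log(t/2) \pm i\pi/2 + O(1/t)$, produce the factors $(2\pi/t)^{it}$ and $e^{it + i\pi/4}$ upon exponentiation. Collecting the power of $\pi$ from the $\pi^{s-1/2}$ prefactor and simplifying the constant phase yields the stated expression for $\chi(1/2+it)$, with the error term $1 + O(1/t)$ coming from exponentiating the $O(|z|^{-1}) = O(1/t)$ remainder in Stirling. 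The second formula follows either by an identical computation with $t$ replaced by $-t$ in the appropriate branches, or more cleanly by observing that $\chi(1/2+it)\chi(1/2-it) = 1$ (a direct consequence of the functional equation $\zeta(s) = \chi(s)\zeta(1-s)$ applied at $s = 1/2 + it$ and $s = 1/2 - it$), so that $\chi(1/2-it)$ is the reciprocal of the already-derived expression, and $(1 + O(1/t))^{-1} = 1 + O(1/t)$.

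The main obstacle is purely bookkeeping: one must track the branch of the logarithm and the phase factors carefully so that the constant $e^{i\pi/4}$ emerges with the correct sign, since sloppiness with $\log(\pm i) = \pm i\pi/2$ is exactly where such computations go wrong. There is no real analytic difficulty here — the result is entirely classical and can in fact be quoted directly from Titchmarsh \cite{Tit}, so in the write-up I would either cite it or give the short Stirling computation sketched above.
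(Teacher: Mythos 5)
Your proposal is correct and ultimately matches the paper's approach: the paper simply cites the asymptotic expansion of $\chi(1-s)$ following Titchmarsh \cite[(7.4.3)]{Tit}, which is exactly the classical Stirling computation you sketch. Your additional observations — the identity $\chi(s) = \pi^{s-1/2}\Gamma((1-s)/2)/\Gamma(s/2)$, the branch bookkeeping for the phase $e^{i\pi/4}$, and the shortcut $\chi(1/2+it)\chi(1/2-it)=1$ for the second formula — are all accurate and would fill in the details the paper leaves to the reference.
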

\begin{proof} The above formulae follow from the equation right after Titchmarsh \cite[(7.4.3)]{Tit} concerning the asymptotic evaluation of $\chi(1-s).$
\end{proof}

We next demonstrate how the problem shall be reduced to that of computing integrals of products of twisted Dirichlet polynomials, but before accomplishing such an endeavour it is desirable to recall for each $0<\theta<1$ the definition of $D_{\theta}(s)$ in (\ref{diri}). We write $D(s)$ to denote $D_{1/2}(s)$ for the sake of simplicity and introduce
\begin{equation*}P(t)=D(1/2+it)+\chi(1/2+it)D(1/2-it)\end{equation*} for $t\in\mathbb{R},$ where $\chi(s)$ was defined in (\ref{chichi}). It shall also be convenient for further use to define for $T>0$ and fixed $\theta\in\mathbb{R}$ and $a\in\mathbb{N}$ the parameters
\begin{equation}\label{QQQQ}T_{1}=T/2\pi,\ \ \ \ \ \ \ Q=\Big(\frac{aT}{2\pi}\Big)^{\theta}.\end{equation} 
\begin{lem}\label{lemita601}
With the above notation, one has when $0<\theta<1$ that
\begin{equation}\label{prips}M_{a,b,c}^{\theta}(T)=\int_{0}^{T}D_{\theta}(1/2+ait)P(-bt)P(-ct)dt+O\big(T^{3/4}\log T\big)\end{equation} and that whenever $0<\theta\leq 1/2$ then \begin{equation*}M_{a,a,b,c}^{\theta}(T)=\int_{0}^{T}\lvert D_{\theta}(1/2+ait)\rvert^{2}P(-bt)P(-ct)dt+O\big(T^{3/4}(\log T)^{5/2}\big).\end{equation*}
\end{lem}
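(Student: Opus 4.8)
The plan is to replace each of the two zeta factors in (\ref{prrosa}) by the approximate functional equation (\ref{approxim}) and then to dispose of the resulting error terms by the Cauchy--Schwarz inequality in combination with standard mean value estimates for the second moment of $\zeta$ twisted by a short Dirichlet polynomial. Since $\zeta(\bar s)=\overline{\zeta(s)}$ and $\chi(\bar s)=\overline{\chi(s)}$, equation (\ref{approxim}) remains valid when the imaginary part of the argument is negative, so applying it with imaginary parts $-bt$ and $-ct$ and recalling the definition of $P$ yields
$$\zeta(1/2-bit)=P(-bt)+E_{b}(t),\qquad \zeta(1/2-cit)=P(-ct)+E_{c}(t),$$
with $E_{b}(t),E_{c}(t)\ll t^{-1/4}$ for $t\geq1$; as $D_{\theta}(1/2+ait)$ vanishes identically for $t<2\pi/a$, the range of small $t$ contributes $O(1)$ and may be ignored. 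Substituting into the identity
$$\zeta(1/2-bit)\zeta(1/2-cit)=P(-bt)P(-ct)+E_{b}(t)\zeta(1/2-cit)+\zeta(1/2-bit)E_{c}(t)-E_{b}(t)E_{c}(t),$$
one sees that (\ref{prips}) will follow once it is shown that each of
$$\int_{0}^{T}D_{\theta}(1/2+ait)E_{b}(t)\zeta(1/2-cit)\,dt,\quad\int_{0}^{T}D_{\theta}(1/2+ait)\zeta(1/2-bit)E_{c}(t)\,dt,\quad\int_{0}^{T}D_{\theta}(1/2+ait)E_{b}(t)E_{c}(t)\,dt$$
is $O(T^{3/4}\log T)$. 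The reason for retaining a genuine zeta factor alongside each error term, rather than replacing it too by $P$, is that a twisted second moment bound for $\zeta$ stays of size $O(T(\log T)^{O(1)})$ even when $D_{\theta}$ has length close to $T$, whereas a bare mean value for a product of Dirichlet polynomials does not.

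For the first integral, Cauchy--Schwarz together with $\int_{0}^{T}|E_{b}(t)|^{2}\,dt\ll\int_{1}^{T}t^{-1/2}\,dt\ll T^{1/2}$ produces the bound
$$\ll\Big(\int_{0}^{T}|D_{\theta}(1/2+ait)|^{2}|\zeta(1/2-cit)|^{2}\,dt\Big)^{1/2}T^{1/4}.$$
After the change of variables $u=ct$ and a dyadic subdivision of $(0,T]$ (the latter to accommodate the $t$-dependence of the length $(at/2\pi)^{\theta}$ of $D_{\theta}$), the classical asymptotic formula for the twisted second moment of $\zeta$ by a Dirichlet polynomial of length $o(T)$ gives $\int_{0}^{T}|D_{\theta}(1/2+ait)|^{2}|\zeta(1/2-cit)|^{2}\,dt\ll T(\log T)^{2}$, so that the first integral is $O(T^{3/4}\log T)$; the second is identical. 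For the third, $|E_{b}(t)E_{c}(t)|\ll t^{-1/2}$, and Cauchy--Schwarz bounds it by
$$\ll\Big(\int_{0}^{T}|D_{\theta}(1/2+ait)|^{2}t^{-1/2}\,dt\Big)^{1/2}\Big(\int_{0}^{T}t^{-1/2}\,dt\Big)^{1/2}\ll\big((T^{1/2}+T^{\theta})\log T\big)^{1/2}T^{1/4},$$
the estimate $\int_{0}^{T}|D_{\theta}(1/2+ait)|^{2}t^{-1/2}\,dt\ll(T^{1/2}+T^{\theta})\log T$ arising upon expanding the square, handling the diagonal trivially and the off-diagonal by integration by parts in $t$; as $\theta<1$ this is $O(T^{3/4-\delta})$ for a suitable $\delta=\delta(\theta)>0$. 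Collecting the three bounds establishes (\ref{prips}).

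The asymptotic formula for $M_{a,a,b,c}^{\theta}(T)$ follows in exactly the same way with $D_{\theta}(1/2+ait)$ replaced throughout by $|D_{\theta}(1/2+ait)|^{2}$; the hypothesis $\theta\leq1/2$ is precisely what keeps both $\int_{0}^{T}|D_{\theta}(1/2+ait)|^{2}|\zeta(1/2-cit)|^{2}\,dt\ll T(\log T)^{O(1)}$ and $\int_{0}^{T}|D_{\theta}(1/2+ait)|^{2}t^{-1/2}\,dt\ll T^{1/2}\log T$ in force, and the single extra factor $|D_{\theta}(1/2+ait)|^{2}$ appearing inside the Cauchy--Schwarz inequality accounts for the larger power $(\log T)^{5/2}$ recorded in the statement.

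I expect the principal difficulty to lie in the twisted second moment estimate $\int_{0}^{T}|D_{\theta}(1/2+ait)|^{2}|\zeta(1/2-cit)|^{2}\,dt\ll T(\log T)^{O(1)}$ in the relevant range of parameters: one is dealing with a mean value over a Dirichlet polynomial whose length approaches $T$, and the argument must absorb both the $t$-dependent truncation and the circumstance that, after normalising the variable, the polynomial carries the non-integer frequencies $n^{a/c}$ (respectively $n^{a/b}$), so that its effective length is $\ll T^{\theta a/c}$. Once this input is secured, the remaining estimates are entirely routine.
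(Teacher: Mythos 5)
Your overall strategy --- substitute the approximate functional equation for both zeta factors and estimate the resulting error integrals by Cauchy--Schwarz together with second moments --- is the same one the paper uses, and your algebraic decomposition $\zeta_{-b}\zeta_{-c}-P(-bt)P(-ct)=E_{b}\zeta_{-c}+\zeta_{-b}E_{c}-E_{b}E_{c}$ is equivalent to the paper's. The difference lies in how the Cauchy--Schwarz inequality is deployed, and there your version has a genuine gap.

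For the cross terms you split off $E_{b}$ on its own, arriving at
$$\int_{0}^{T}D_{\theta}E_{b}\zeta_{-c}\,dt\ll T^{1/4}\Big(\int_{0}^{T}\lvert D_{\theta}(1/2+ait)\rvert^{2}\lvert\zeta(1/2-cit)\rvert^{2}\,dt\Big)^{1/2},$$
and you then invoke a ``classical twisted second moment'' to assert that the inner integral is $\ll T(\log T)^{2}$. This step is not justified: after normalising the variable, $D_{\theta}$ carries frequencies $n^{a/c}$, which are irrational, and no off-the-shelf twisted second moment formula applies; you yourself flag this as ``the principal difficulty'' but leave it open. The only easy route to this mean value is $(\int\lvert D_{\theta}\rvert^{4})^{1/2}(\int\lvert\zeta_{-c}\rvert^{4})^{1/2}\ll T(\log T)^{4}$, which costs an extra $\log T$ relative to the stated error $T^{3/4}\log T$, and for $M_{a,a,b,c}^{\theta}$ the same strategy would push you to a fourth moment of $D_{\theta}$ against $\lvert\zeta\rvert^{2}$, or worse, an eighth moment of $D_{\theta}$. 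So the argument as written neither reproduces the stated error terms nor rests on results it establishes.

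The fix, which is essentially what the paper does, is simply to pair the $t^{-1/4}$ decay of $E_{b}$ with the zeta factor rather than sending it off alone: apply Cauchy--Schwarz as
$$\int_{0}^{T}D_{\theta}E_{b}\zeta_{-c}\,dt\ll\Big(\int_{0}^{T}\lvert D_{\theta}(1/2+ait)\rvert^{2}\,dt\Big)^{1/2}\Big(\int_{1}^{T}t^{-1/2}\lvert\zeta(1/2-cit)\rvert^{2}\,dt\Big)^{1/2}.$$
The first factor is controlled by the elementary mean value estimate (\ref{rrio}), and the second is $\ll T^{1/2}\log T$ by partial integration and the Hardy--Littlewood second moment; this gives $T^{3/4}\log T$ with no twisted moment needed. (Concretely, the paper first performs integration by parts to replace the weight $t^{-1/4}$ by $T^{-1/4}$, and only then applies Cauchy--Schwarz to $\int\lvert D_{\theta}\rvert\lvert\zeta\rvert$, which separates into the second moments of $D_{\theta}$ and of $\zeta$; your split $(\int\lvert D_{\theta}\rvert^{2})^{1/2}(\int\lvert E_{b}\rvert^{2}\lvert\zeta_{-c}\rvert^{2})^{1/2}$ achieves the same end more directly.) For $M_{a,a,b,c}^{\theta}$ the analogous split is $(\int\lvert D_{\theta}\rvert^{4})^{1/2}(\int t^{-1/2}\lvert\zeta\rvert^{2})^{1/2}$, and one needs to separately establish the bound $\int_{0}^{T}\lvert D_{\theta}(1/2+ait)\rvert^{4}\,dt\ll T(\log T)^{4}$, which the paper proves via Montgomery--Vaughan's mean value theorem; this input is missing from your proposal.
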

\begin{proof}

We recall (\ref{QQQQ}) and observe by making a distinction between the diagonal and off-diagonal contribution that upon denoting for given pairs $(n_{1},n_{2})\in\mathbb{N}^{2}$ the expression $M_{n_{1},n_{2}}=2\pi a^{-1}\max(n_{1}^{1/\theta},n_{2}^{1/\theta})$ it then transpires that
\begin{align*}\int_{0}^{T}\lvert D_{\theta}(1/2+ait)\rvert^{2}dt&= \sum_{n\leq Q}n^{-1}(T-2\pi a^{-1} n^{1/\theta})+\sum_{\substack{n_{1}\leq Q\\ n_{2}\leq Q}}(n_{1}n_{2})^{-1/2}\int_{M_{n_{1},n_{2}}}^{T}e^{-iat\log (n_{1}/n_{2})}dt
\\
&=\sum_{n\leq Q}n^{-1}(T-2\pi a^{-1}n^{1/\theta})+O\Bigg(\sum_{\substack{n_{1}\leq Q\\ n_{2}\leq Q}}\frac{(n_{1}n_{2})^{-1/2}}{\lvert\log (n_{1}/n_{2})\rvert}\Bigg).\end{align*}
Therefore, an application of Montgomery and Vaughan \cite[Theorem 2]{MonVau2} yields
\begin{equation}\label{rrio}\int_{0}^{T}\lvert D_{\theta}(1/2+ait)\rvert^{2}dt= T\log Q+(\gamma-\theta) T+O\big(TQ^{-1}+Q\big).\end{equation} 

We continue by defining for $n\in\mathbb{Z}$ the function $$\zeta_{n}(t)=\zeta(1/2+nit)$$ and recalling the approximate functional equation (\ref{approxim}) to derive
\begin{equation*}M_{a,b,c}^{\theta}(T)=\int_{0}^{T}D_{\theta}(1/2+ait)P(-bt)P(-ct)dt+E(T),\end{equation*}wherein the error term $E(T)$ in the above line satisfies the estimate$$E(T)\ll T^{1/4}+E_{1}(T)+E_{2}(T),$$ and the terms $E_{1}(T)$ and $E_{2}(T)$ are defined by the relations
$$E_{1}(T)= \int_{1}^{T}t^{-1/2}\lvert D_{\theta}(1/2+ait)\rvert dt$$ and
$$E_{2}(T)=\int_{1}^{T}t^{-1/4}\lvert D_{\theta}(1/2+ait)\rvert\big( \lvert\zeta_{-b}(t)\rvert +\lvert\zeta_{-c}(t)\rvert\big)dt.$$ We use Cauchy's inequality in conjunction with (\ref{rrio}) to obtain
$$E_{1}(T)\ll (\log T)^{1/2}\Big(\int_{0}^{T}\lvert D_{\theta}(1/2+ait)\rvert^{2}\Big)^{1/2}\ll T^{1/2}\log T.$$
Likewise, integration by parts combined with another application of Cauchy's inequality and both (\ref{rrio}) in conjunction with the asymptotic formula for the second moment of the Riemman Zeta function (see for instance Titchmarsh \cite[Theorem 7.3]{Tit}) delivers
\begin{align*}E_{2}(T)\ll & 1+T^{-1/4}\int_{0}^{T}\lvert D_{\theta}(1/2+ait)\rvert\big( \lvert\zeta_{-b}(t)\rvert +\lvert\zeta_{-c}(t)\rvert\big)dt
\\
&+\max_{r\in\{b,c\}}\int_{1}^{T}t^{-5/4}\int_{0}^{t}\lvert D_{\theta}(1/2+ais)\rvert\lvert\zeta(1/2-irs)\rvert dsdt\ll T^{3/4}(\log T).\end{align*} The combination of the above estimates yields (\ref{prips}).

In order to prepare the ground for the computation of the fourth moment of the Dirichlet polynomial it seems desirable to define for $t>0$ and $n\in\mathbb{N}$ the divisor function
\begin{equation}\label{dividivi}d_{t}(n)=\sum_{\substack{m\mid n\\ m,n/m\leq (at/2\pi)^{\theta}}}1.\end{equation} Then in the same vein as above and upon making the distinction into diagonal and off-diagonal contribution it transpires that 
\begin{align}\label{momen4}\int_{0}^{T}\lvert D_{\theta}(1/2+ait)\rvert^{4}dt=&\int_{0}^{T}\Bigg(\sum_{n\leq (at/2\pi)^{2\theta}}\frac{d_{t}(n)^{2}}{n}\Bigg)dt
\\
&+\int_{0}^{T}\sum_{\substack{n_{1},n_{2}\leq (at/2\pi)^{2\theta}\\ n_{1}\neq n_{2}}}\frac{d_{t}(n_{1})d_{t}(n_{2})}{(n_{1}n_{2})^{1/2}}e^{-iat\log (n_{1}/n_{2})}dt.\nonumber
\end{align}
A careful perusal of the second summand in the above equation reveals that by integrating accordingly and deleting the restriction about the size of the divisors in (\ref{dividivi}) one has that
$$\int_{0}^{T}\sum_{\substack{n_{1},n_{2}\leq (at/2\pi)^{2\theta}\\ n_{1}\neq n_{2}}}\frac{d_{t}(n_{1})d_{t}(n_{2})}{(n_{1}n_{2})^{1/2}}e^{-iat\log (n_{1}/n_{2})}dt\ll \sum_{\substack{n_{1},n_{2}\leq Q^{2}\\ n_{1}\neq n_{2}}}\frac{d(n_{1})d(n_{2})}{(n_{1}n_{2})^{1/2}\lvert \log(n_{1}/n_{2})\rvert}.$$ It is then apparent that another application of Montgomery and Vaughan \cite[Theorem 2]{MonVau2} enables one to obtain
\begin{equation}\label{kkkisp}\int_{0}^{T}\sum_{\substack{n_{1},n_{2}\leq (at/2\pi)^{2\theta}\\ n_{1}\neq n_{2}}}\frac{d_{t}(n_{1})d_{t}(n_{2})}{(n_{1}n_{2})^{1/2}}e^{-iat\log (n_{1}/n_{2})}dt\ll \sum_{n\leq Q^{2}}d(n)^{2}\ll Q^{2}(\log T)^{3},\end{equation}
and that integrating trivially one gets
\begin{equation}\label{kkirp}\int_{0}^{T}\Bigg(\sum_{n\leq (at/2\pi)^{2\theta}}\frac{d_{t}(n)^{2}}{n}\Bigg)dt\ll T\sum_{n\leq Q^{2}}\frac{d(n)^{2}}{n}\ll T(\log T)^{4},\end{equation} which combined with the preceding lines delivers 
\begin{equation}\label{priosa}\int_{0}^{T}\lvert D_{\theta}(1/2+ait)\rvert^{4}dt\ll T(\log T)^{4}.\end{equation}

We shall resume our discussion by writing for convenience $$E_{\theta}(T)=\Big\lvert M_{a,a,b,c}^{\theta}(T)-\int_{0}^{T}\lvert D_{\theta}(1/2+ait)\rvert^{2}P(-bt)P(-ct)dt\Big\rvert$$ and observing that the approximate functional equation (\ref{approxim}) in conjunction with Cauchy's inequality and the preceding estimates enables one to deduce
\begin{align*}E_{\theta}(T) \ll \int_{0}^{T}t^{-1/2}\lvert D_{\theta}(1/2+ait)\rvert^{2}dt+\max_{r\in\{b,c\}}\int_{0}^{T}t^{-1/4}\lvert D_{\theta}(1/2+ait)\rvert^{2}\lvert \zeta(1/2-irt)\rvert dt.
\end{align*} 
It transpires after an application of integration by parts and equation (\ref{rrio}) that the first summand in the above equation is $O\big(T^{1/2}(\log T)\big)$. Likewise, it seems worth observing that by Cauchy's inequality in conjunction with both the aforementioned formula for the second moment of the Riemann-zeta function and (\ref{priosa}) then one has
\begin{align*}\int_{0}^{t}\lvert D_{\theta}(1/2+ais)\rvert^{2}\lvert \zeta(1/2-irs)\rvert ds
&\ll \Big(\int_{0}^{t}\lvert D_{\theta}(1/2+ais)\rvert^{4}ds\Big)^{1/2}\Big(\int_{0}^{rt}\lvert \zeta(1/2+is)\rvert^{2} ds\Big)^{1/2}
\\
&\ll t(\log t)^{5/2}.
\end{align*}
Therefore, employing integration by parts combined with the preceding estimates enables one to get
\begin{align*}\lvert E_{\theta}(T)\rvert \ll T^{3/4}(\log T)^{5/2},\end{align*}
as desired.
\end{proof}

The following lemma shall make use of some of the formulae deduced in the above proof and may enable one to shift the parameter $T$ by $CTQ^{-1}$ for some constant $C>0$ at the cost of having a negligible impact in the asymptotic evaluation. One could have utilised the methods of Heath-Brown \cite{Heath} involving the use of estimates for Kloosterman sums to obtain asymptotics for sums of the shape 
$$\sum_{\substack{(at/2\pi)^{\theta}\leq q\leq (at/2\pi)^{2\theta}}}\sum_{\substack{n\leq (at/2\pi)^{2\theta}\\ q\mid n}}d(n).$$ Having achieved such an endeavour and a routinary application of Perron's formula would have permitted one to asymptotically evaluate and give account of lower order terms for the moment (\ref{momen4}), the upper bound deduced therein being sufficient for our current needs.
\begin{lem}\label{priros}
Let $C>0$ be a fixed constant and $0<\theta<1$. Then one has that
$$\int_{T}^{T+CTQ^{-1}}D_{\theta}(1/2+ait)\zeta(1/2-bit)\zeta(1/2-cit)dt\ll TQ^{-1/2}+(TQ)^{1/2}.$$ Likewise, it follows that
\begin{align*}\int_{T}^{T+CTQ^{-1}}\lvert D_{\theta}(1/2+ait)\rvert^{2}\zeta(1/2-bit)\zeta(1/2-cit)dt\ll TQ^{-1/2}+T^{1/2}Q.
\end{align*}
\end{lem}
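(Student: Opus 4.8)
The plan is to estimate the two integrals over the short interval $[T,T+CTQ^{-1}]$ by reducing them via the approximate functional equation (\ref{approxim}) to integrals of twisted Dirichlet polynomials, exactly as in the proof of Lemma \ref{lemita601}, and then to bound those directly by expanding into diagonal and off-diagonal pieces. First I would note that on the interval in question $t\asymp T$, so the error term $O(t^{-1/4})$ in (\ref{approxim}) contributes at most $O(TQ^{-1}\cdot T^{-1/4}\cdot \|D_\theta\|_\infty)\ll T^{3/4+\varepsilon}Q^{-1}$, which is dominated by $TQ^{-1/2}$; hence it suffices to bound $\int_T^{T+CTQ^{-1}}D_\theta(1/2+ait)P(-bt)P(-ct)\,dt$ and its square-modulus analogue. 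Here I am using the notation $P(t)=D(1/2+it)+\chi(1/2+it)D(1/2-it)$ from the discussion preceding Lemma \ref{lemita601}, together with Lemma \ref{lem601} to replace $\chi(1/2\pm it)$ by unimodular oscillating factors $(2\pi/t)^{\pm it}e^{\pm(it+i\pi/4)}(1+O(1/t))$ up to negligible error.

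Next I would expand $D_\theta(1/2+ait)P(-bt)P(-ct)$ (respectively $|D_\theta|^2P(-bt)P(-ct)$) into a sum over triples $(n_1,n_2,n_3)$ with $n_1\le Q$, $n_2,n_3\le Q^{1/\theta\cdot\theta}=$ (more precisely $n_2,n_3\le (bT/2\pi)^{1/2},(cT/2\pi)^{1/2}$, both $\ll T^{1/2}$) of terms of the form $(n_1)^{-1/2}(n_2 n_3)^{-1/2}$ times a factor $e^{i\phi(t)}$, where the phase $\phi(t)$ is linear in $t$ up to the slowly varying contributions of the $\chi$-factors, which after Lemma \ref{lem601} add a term of size $O((b+c)t\log t)$-derivative-wise but are handled by the standard first-derivative (van der Corput / partial integration) bound. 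For the \emph{diagonal} terms — those for which the leading coefficient of $\phi'(t)$ vanishes — the integrand is $O(1)$ times $(n_1)^{-1/2}(n_2n_3)^{-1/2}$, and integrating over a window of length $CTQ^{-1}$ gives a contribution $\ll TQ^{-1}\sum (n_1)^{-1/2}(n_2n_3)^{-1/2}$; bounding the $n_2,n_3$ sums by $O(T^{1/4+\varepsilon})$ each and the $n_1$ sum by $O(Q^{1/2})$ yields $\ll TQ^{-1}\cdot Q^{1/2}\cdot T^{1/2+\varepsilon}$, which one checks is $\ll TQ^{-1/2}$ in the relevant range (using $\theta<1$ so that $Q=(aT/2\pi)^\theta$ and the constraint that the $n_2,n_3$ variables are genuinely constrained by the conductor). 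For the \emph{off-diagonal} terms, partial integration against the oscillatory factor $e^{i\phi(t)}$ costs a factor $|\phi'(t)|^{-1}\ll |\log(\cdots)|^{-1}$ (bounded below by a constant multiple of $1/\max(n_i)$ in the worst case), and summing the resulting series $\sum (n_1 n_2 n_3)^{-1/2}\max(n_i)$ over the ranges $n_1\le Q$, $n_2,n_3\ll T^{1/2}$ produces a term $\ll (TQ)^{1/2}$ (respectively $\ll T^{1/2}Q$ when the extra $D_\theta$ factor is present), which is the second term in each claimed bound.

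The main obstacle I expect is the bookkeeping of the off-diagonal phases once the $\chi$-factors from $P(-bt)$ and $P(-ct)$ are incorporated: each $P$ contributes either a pure Dirichlet term or a term twisted by $(2\pi/(rt))^{\mp irt}e^{\mp(irt+i\pi/4)}$, so the product $P(-bt)P(-ct)$ yields four families of terms, and in the twisted families the phase $\phi(t)$ acquires a genuinely $t$-dependent (not merely linear) piece of the form $\pm bt\log(bt/2\pi n_2^2)\pm ct\log(ct/2\pi n_3^2)$. One must verify that $\phi'(t)$ is still bounded away from zero (by a quantity like $|\log(b n_3^2/(c n_2^2))+\text{const}|$ or, when that is small, by comparing the sizes of the two logarithmic terms) on the short window — or, when it is not, that the stationary-phase contribution is itself acceptably small — so that partial integration (or the second-derivative test, giving a gain of $T^{-1/2}$ per oscillatory factor) goes through uniformly. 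Since the interval has length only $CTQ^{-1}$, the first-derivative test already suffices whenever $|\phi'|\gg 1$, and the ranges are short enough that the surviving near-diagonal terms are few; the estimates (\ref{rrio}), (\ref{priosa}) proved above, together with the standard second-moment bound for $\zeta$, give everything else by Cauchy--Schwarz as a safety net. Assembling the diagonal and off-diagonal contributions and comparing with the negligible $\chi$-error and functional-equation error then yields the two displayed bounds.
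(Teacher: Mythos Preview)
Your main line of argument has a genuine gap, and your throwaway ``safety net'' is in fact the paper's actual proof.

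First, the diagonal bound you state is off by a factor of $T^{1/2}$. You write that the diagonal contributes
\[
\ll TQ^{-1}\cdot Q^{1/2}\cdot T^{1/2+\varepsilon},
\]
and then claim this is $\ll TQ^{-1/2}$. But $TQ^{-1}\cdot Q^{1/2}\cdot T^{1/2+\varepsilon}=T^{3/2+\varepsilon}Q^{-1/2}$, which is not $O(TQ^{-1/2})$ for any $\theta>0$. The point you are missing is that on the genuine diagonal the phase constraint (e.g.\ $n_{1}^{a}=n_{2}^{b}n_{3}^{c}$ in the untwisted piece) forces the triple sum to converge absolutely, so the diagonal is $O(TQ^{-1})$; summing over \emph{all} triples, as you do, throws this away. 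Your sup-norm handling of the functional-equation error has the same problem: $\|D_{\theta}\|_{\infty}$ is $\asymp Q^{1/2}$, not $T^{\varepsilon}$, so the bound you record there is also incorrect as written. Second, your treatment of the $\chi$-twisted families is not a proof: saying that one must ``verify that $\phi'(t)$ is still bounded away from zero \ldots\ or, when it is not, that the stationary-phase contribution is itself acceptably small'' is precisely the work, and on a window of length $TQ^{-1}$ the first-derivative test alone does not obviously beat the trivial bound for the near-stationary terms.

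The paper bypasses all of this. It applies H\"older's inequality directly to the integral over the short window:
\[
\int_{T}^{T+CTQ^{-1}}\!\!D_{\theta}(1/2+ait)\zeta(1/2-bit)\zeta(1/2-cit)\,dt\ \ll\ M_{C,2,\theta}(T)^{1/2}\,M_{C}(T)^{1/2},
\]
where $M_{C,2,\theta}(T)=\int_{T}^{T+CTQ^{-1}}|D_{\theta}|^{2}$ and $M_{C}(T)=\max_{r\in\{b,c\}}\int_{T}^{T+CTQ^{-1}}|\zeta(1/2+irt)|^{4}\,dt$. Equation (\ref{rrio}) evaluated at $T+CTQ^{-1}$ and at $T$ gives $M_{C,2,\theta}(T)\ll TQ^{-1}\log Q+Q$, and Heath-Brown's fourth-moment asymptotic with power-saving error gives $M_{C}(T)\ll TQ^{-1}(\log Q)^{4}+T^{7/8+\varepsilon}$. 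Multiplying out the square roots already yields $TQ^{-1/2}+(TQ)^{1/2}$ after trivially absorbing the cross terms. The second estimate is identical with $M_{C,4,\theta}(T)=\int_{T}^{T+CTQ^{-1}}|D_{\theta}|^{4}$ in place of $M_{C,2,\theta}$, bounded via the argument leading to (\ref{kkkisp})--(\ref{kkirp}) by $TQ^{-1}(\log T)^{4}+Q^{2}(\log T)^{3}$. This is exactly the Cauchy--Schwarz you relegate to a parenthetical; promote it to the whole proof and drop the phase analysis entirely.
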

\begin{proof}
We shall start with a routinary application of Holder's inequality to obtain
$$\int_{T}^{T+CTQ^{-1}}D_{\theta}(1/2+ait)\zeta(1/2-bit)\zeta(1/2-cit)dt\ll M_{C,2,\theta}(T)^{1/2}M_{C}(T)^{1/2},$$
wherein the above terms are defined by means of the expressions
$$M_{C,2,\theta}(T)=\int_{T}^{T+CTQ^{-1}}\lvert D_{\theta}(1/2+ait)\rvert^{2}dt,\ \ \ M_{C}(T)=\max_{r\in\{b,c\}}\int_{T}^{T+CTQ^{-1}}\lvert \zeta(1/2+rit)\rvert^{4}dt.$$
It is then apparent that equation (\ref{rrio}) then yields
$$M_{C,2,\theta}(T)\ll TQ^{-1}(\log Q)+Q.$$Likewise, the asymptotic formula for the fourth moment \cite{Heath} allows one to obtain
\begin{equation*}M_{C}(T)\ll T^{7/8+\varepsilon}+TQ^{-1}(\log Q)^{4},\end{equation*} whence combining the preceding equations delivers \begin{align*}\int_{T}^{T+CTQ^{-1}}D_{\theta}(1/2+ait)\zeta(1/2-bit)\zeta(1/2-cit)dt \ll & TQ^{-1}(\log Q)^{5/2}+T^{1/2}(\log Q)^{2}
\\
&+T^{15/16+\varepsilon}Q^{-1/2}+T^{7/16+\varepsilon}Q^{1/2},
\end{align*} which then yields the desired result. 

In a similar fashion, another application of Holder's inequality enables one to deduce 
$$\int_{T}^{T+CTQ^{-1}}\lvert D_{\theta}(1/2+ait)\rvert^{2}\zeta(1/2-bit)\zeta(1/2-cit)dt\ll M_{C,4,\theta}(T)^{1/2}M_{C}(T)^{1/2},$$ wherein $M_{C,4,\theta}(T)$ is defined as
$$M_{C,4,\theta}(T)=\int_{T}^{T+CTQ^{-1}}\lvert D_{\theta}(1/2+ait)\rvert^{4}dt.$$ An insightful perusal of the discussion that leads to (\ref{kkkisp}) then reveals that one may obtain in an analogous manner
$$\int_{T}^{T+CTQ^{-1}}\sum_{\substack{n_{1},n_{2}\leq (at/2\pi)^{2\theta}\\ n_{1}\neq n_{2}}}\frac{d_{t}(n_{1})d_{t}(n_{2})}{(n_{1}n_{2})^{1/2}}e^{-iat\log (n_{1}/n_{2})}dt\ll \sum_{n\ll Q^{2}}d(n)^{2}\ll Q^{2}(\log T)^{3}.$$ Likewise, the same argument that yields (\ref{kkirp}) permits one to deduce that
$$\int_{T}^{T+CTQ^{-1}}\Bigg(\sum_{n\leq (at/2\pi)^{2\theta}}\frac{d_{t}(n)^{2}}{n}\Bigg)dt\ll TQ^{-1}\sum_{n\leq Q^{2}}\frac{d(n)^{2}}{n}\ll TQ^{-1}(\log T)^{4},$$ which combined with the preceding discussion delivers 
\begin{align*}\int_{T}^{T+CTQ^{-1}}\lvert D_{\theta}(1/2+ait)\rvert^{2}\zeta(1/2-bit)\zeta(1/2-cit)dt\ll & TQ^{-1}(\log T)^{4}+(TQ)^{1/2}(\log T)^{7/2}
\\
&+T^{15/16+\varepsilon}Q^{-1/2}+T^{7/16+\varepsilon}Q.
\end{align*} Consequently, the above estimate yields the required conclusion.
\end{proof}

We then apply Lemma \ref{lemita601} to express the corresponding integrals as \begin{equation}\label{piresiti2}M_{a,b,c}^{\theta}(T)=\sum_{j=1}^{3}I_{j,\theta}(T) +O\big(T^{3/4}\log T\big)\end{equation} and \begin{equation}\label{piresiti23}M_{a,a,b,c}^{\theta}(T)=\sum_{j=1}^{3}I_{j,\theta,a}(T) +O\big(T^{3/4}(\log T)^{5/2}\big),\end{equation} the terms $I_{j,\theta}(T)$ and $I_{j,\theta,a}(T)$ being defined at a later point in the memoir.

\section{Off-diagonal contribution}\label{offdiagonal}
We shall devote the present section to recover the relevant computations pertaining to the analysis of the off-diagonal contribution in \cite{Pli}, it being required to introduce first some notation. We fix a triple $(a,b,c)\in\mathbb{R}^{+}$, write $\bfn$ to denote $(n_{1},n_{2},n_{3})\in\mathbb{N}^{3}$ and consider the weighted variables \begin{equation}\label{n1prima}n_{1,\theta}'=n_{1}/a^{\theta},\ \ \ \ n_{2}'=n_{2}/\sqrt{b},\ \ \ \ n_{3}'=n_{3}/\sqrt{c},\end{equation} and the parameters \begin{equation}\label{NNN}N_{\bfn,\theta}=2\pi\max(n_{1,\theta}'^{1/\theta},n_{2}'^{2},n_{3}'^{2}) \ \ \ \ \ \text{and}\ \ \ \ \ \  P_{\bfn}=n_{1}n_{2}n_{3}.\end{equation} We recall (\ref{QQQQ}) and foreshadow the convenience of introducing the set of triples
\begin{equation}\label{Babc}\mathcal{B}_{a,b,c,\theta}=\Big\{\bfn\in\mathbb{N}^{3}:\ \ n_{1}\leq Q,\ \ n_{2}\leq \sqrt{b T_{1}},\ \ n_{3}\leq \sqrt{c T_{1}}\Big\},\end{equation} the condition $\bfn \in \mathcal{B}_{a,b,c,\theta}$ being equivalent to the inequality
\begin{equation}\label{lades}N_{\bfn,\theta}\leq T.\end{equation} We then present to the reader the integral 
\begin{align}\label{Is1}I_{1,\theta}(T)&
=\int_{0}^{T}D_{\theta}(1/2+ait)D(1/2-bit)D(1/2-cit)dt=\sum_{\bfn\in \mathcal{B}_{a,b,c,\theta}}P_{\bfn}^{-1/2}\int_{N_{\bfn}}^{T}\Big(\frac{n_{2}^{b}n_{3}^{c}}{n_{1}^{a}}\Big)^{it}dt.\end{align} We make a distinction between the diagonal contribution and the off-diagonal one to obtain
\begin{equation}\label{I131}I_{1,\theta}(T)=J_{1}^{\theta}(T)+J_{2}^{\theta}(T),\end{equation}where in the above equation one has \begin{equation}\label{0.121}J_{1}^{\theta}(T)=\sum_{\substack{\bfn\in \mathcal{B}_{a,b,c,\theta}\\ n_{1}^{a}=n_{2}^{b}n_{3}^{c}}}(T-N_{\bfn,\theta})P_{\bfn}^{-1/2},\ \ \ \ \ \ \ \ \ J_{2}^{\theta}(T)=\sum_{\substack{\bfn\in \mathcal{B}_{a,b,c,\theta}\\ n_{1}^{a}\neq n_{2}^{b}n_{3}^{c}}}P_{\bfn}^{-1/2}\int_{N_{\bfn}}^{T}\Big(\frac{n_{2}^{b}n_{3}^{c}}{n_{1}^{a}}\Big)^{it}dt.\end{equation}

\begin{lem}\label{prop478}
If $a,b,c\in\mathbb{N}$ with $a<c\leq b$ then for $0<\theta<1$ the bound
$$J_{2}^{\theta}(T)\ll T^{1/2}Q^{1/2}+T^{1+1/2c-1/2a}$$ holds unconditionally. If one assumes Conjecture \ref{conj11} then one further has
$$J_{2}^{\theta}(T)\ll T^{1/2}Q^{1/2}+Q^{1/2+3a/2c+\varepsilon}.$$
\end{lem}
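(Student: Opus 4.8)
The plan is to bound $J_2^\theta(T)$ by controlling the oscillatory integrals $\int_{N_\bfn}^T (n_2^b n_3^c/n_1^a)^{it}\,dt$ individually and then summing over $\bfn \in \mathcal{B}_{a,b,c,\theta}$ with $n_1^a \neq n_2^b n_3^c$. The elementary estimate
\[
\int_{N_\bfn}^T \Big(\frac{n_2^b n_3^c}{n_1^a}\Big)^{it}\,dt \ll \frac{1}{\big|\log(n_2^b n_3^c/n_1^a)\big|}
\]
is the starting point, so the task becomes estimating
\[
\sum_{\substack{\bfn \in \mathcal{B}_{a,b,c,\theta}\\ n_1^a \neq n_2^b n_3^c}} P_\bfn^{-1/2}\,\big|\log(n_2^b n_3^c/n_1^a)\big|^{-1}.
\]
Writing $D = n_2^b n_3^c - n_1^a$, I would split according to the size of $|D|$ relative to $n_1^a$: when $|D|$ is comparable to $n_1^a$ (say $|D| \geq n_1^a/2$) the logarithm is bounded below by an absolute constant and one simply estimates $\sum P_\bfn^{-1/2}$ trivially over the box, which by $n_1 \le Q$, $n_2 \ll (bT_1)^{1/2}$, $n_3 \ll (cT_1)^{1/2}$ gives something like $Q^{1/2} T^{1/2}$ up to logarithms — this is the source of the $T^{1/2}Q^{1/2}$ term. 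When $|D| < n_1^a/2$ one uses $|\log(1 + D/n_1^a)| \asymp |D|/n_1^a$, so the sum over this range is $\ll \sum P_\bfn^{-1/2} n_1^a |D|^{-1}$.

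For the unconditional bound, I would invoke the known gap estimate mentioned in the introduction (from \cite{Pli}), namely that for $D \ne 0$ one has the trivial-but-nonzero lower bound $|D| \gg 1$ (in fact $|D|\gg(\log n_1)^\delta$, but $|D|\geq 1$ suffices here), combined with counting: for fixed $n_1$ and fixed $D$, the number of pairs $(n_2,n_3)$ with $n_2^b n_3^c = n_1^a + D$ is $\ll (n_1^a)^\varepsilon$ by the divisor bound, but more usefully one fixes $n_3$ (there are $\ll (cT_1)^{1/2}$ choices) and then $n_2^b = (n_1^a+D)/n_3^c$ determines $n_2$ up to $O(1)$; alternatively one fixes $n_1$ and $n_2$ and notes $n_3$ is essentially determined. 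Summing $n_1^a |D|^{-1}$ over $1 \le |D| \ll n_1^a$ against the weight $P_\bfn^{-1/2}$ and using that the largest variable dominates (giving the constraint $N_{\bfn,\theta}\le T$, i.e. the largest of $n_1^{a/\theta}, n_2^2, n_3^2$ is $\ll T$), one arranges the summation so that the worst case is when $n_1$ is as large as possible, i.e. $n_1 \asymp Q$, and the $n_2, n_3$ sums contribute their full ranges; the arithmetic of which variable is free (constrained by $n_3 \ll (cT_1)^{1/2}$) is what produces the exponent $T^{1+1/2c-1/2a}$. Under Conjecture \ref{conj11}, the improved lower bound $|D| \gg n_1^{a-1-\varepsilon} n_2^{-1} n_3^{-1}$ replaces $|D| \gg 1$, which turns $n_1^a|D|^{-1}$ into $\ll n_1^{1+\varepsilon} n_2 n_3$; summing this against $P_\bfn^{-1/2} = (n_1 n_2 n_3)^{-1/2}$ over the box and using $n_1 \le Q$, $n_2,n_3 \ll (T_1)^{1/2} = (Q^{1/\theta})^{1/2}$ (roughly), together with the divisor-bound count of representations, produces the claimed $Q^{1/2+3a/2c+\varepsilon}$.

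The main obstacle I anticipate is the bookkeeping in the second regime: one must carefully decide which of $n_1, n_2, n_3$ to treat as "free" and which as "determined" when summing over solutions of $n_2^b n_3^c - n_1^a = D$, and then match the resulting power of $T$ (or $Q$) against the constraint $N_{\bfn,\theta} \le T$ so that the exponent comes out exactly as $1 + 1/2c - 1/2a$ unconditionally and $1/2 + 3a/2c$ under the conjecture. Getting the exponent $3a/2c$ right in particular requires tracking how the bound $n_3 \le \sqrt{cT_1}$ interacts with $\theta < c/2a$ (which guarantees $Q = (aT/2\pi)^\theta$ is small enough that $Q^{3a/2c}$ stays below $T^{1/2}$-type error terms), and ensuring the extra factors of $n_2 n_3$ coming from the conjectural lower bound for $|D|$ are absorbed correctly after division by $P_\bfn^{1/2}$. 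The $\varepsilon$-losses from the divisor function and from Conjecture \ref{conj11} are harmless and can be collected at the end.
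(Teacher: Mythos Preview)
The paper does not prove this lemma at all: its entire proof is the sentence ``It follows from \cite[Lemma 3.2]{Pli}.'' So there is no argument in the present paper to compare against; your outline is effectively a reconstruction of what that cited lemma presumably contains, and at the strategic level it is the right one --- integrate, split according to the size of $D=n_2^bn_3^c-n_1^a$, handle the regime $|D|\gg n_1^a$ by the trivial box sum (giving $T^{1/2}Q^{1/2}$), and in the near-diagonal regime use $|\log(n_2^bn_3^c/n_1^a)|\asymp |D|/n_1^a$ together with either $|D|\ge 1$ or the conjectural gap.

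One point where your sketch would actually fail as written: in the conditional case you propose to sum $n_1^{1/2+\varepsilon}(n_2n_3)^{1/2}$ over the full box $n_1\le Q$, $n_2,n_3\ll T_1^{1/2}$. That is far too crude --- it gives something of size $Q^{3/2}T^{3/2}$, not $Q^{1/2+3a/2c}$. The saving comes from the fact that in the near-diagonal regime one has $n_2^bn_3^c\asymp n_1^a\le Q^a$, so the $(n_2,n_3)$-sum is restricted to $n_2^bn_3^c\ll Q^a$; moreover, for fixed $(n_2,n_3)$ only the $O(1)$ values of $n_1$ closest to $(n_2^bn_3^c)^{1/a}$ matter (the others contribute a harmless $n_0^{1/2}\log$). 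Summing $n_2^{1/2+b/2a}n_3^{1/2+c/2a}$ over $n_2^bn_3^c\ll Q^a$ is what produces the exponent $1/2+3a/2c$. The same restriction $n_2^bn_3^c\ll Q^a$ is relevant in the unconditional case, though there one must also keep track of when the box constraint $n_3\le (cT_1)^{1/2}$ is binding instead, which is how the $\theta$-independent term $T^{1+1/2c-1/2a}$ arises. You flagged the bookkeeping as the obstacle, and it is; just be aware that ``sum over the box'' is not what happens in the small-$D$ range.
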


\begin{proof}
It follows from \cite[Lemma 3.2]{Pli}.
\end{proof}

\section{Diagonal contribution analysis}\label{diagonal}
Equipped with the above estimates we are prepared to obtain an asymptotic formula for $I_{1,\theta}(T)$ defined in (\ref{Is1}) in the context underlying Theorem \ref{thmabcy}. We recall the definitions in (\ref{n1prima}) and the subsequent lines and write whenever $a<c\leq b$ the equation
\begin{equation}\label{J1J9}J_{1}^{\theta}(T)=\sigma_{a,b,c}T-J_{3}^{\theta}(T)-J_{4}^{\theta}(T),\end{equation}
wherein the above terms are \begin{equation}\label{0.1212}J_{3}^{\theta}(T)=T\sum_{\substack{n_{1}^{a}=n_{2}^{b}n_{3}^{c}\\ \max(n_{1,\theta}'^{1/\theta},n_{2}'^{2},n_{3}'^{2})>T_{1}}}P_{\bfn}^{-1/2},\ \ \ \ \ \ \ J_{4}^{\theta}(T)=\sum_{\substack{\bfn\in \mathcal{B}_{a,b,c,\theta}\\ n_{1}^{a}=n_{2}^{b}n_{3}^{c}}}N_{\bfn,\theta}P_{\bfn}^{-1/2},\end{equation}
and wherein the constant $\sigma_{a,b,c}$ is defined by means of the formula
\begin{equation}\label{sigmaa}\sigma_{a,b,c}=\sum_{n_{1}^{a}=n_{2}^{b}n_{3}^{c}}P_{\bfn}^{-1/2},\end{equation}
the convergence of the above series stemming from the inequality among the coefficients.
\begin{prop}\label{lem602z} 
Let $a,b,c\in\mathbb{N}$ satisfying $(a,b,c)=1$ with $a<c\leq b$ and let $0<\theta<\min(c/2a,1)$. If both $a\nmid b$ and $a\nmid c$ then one has
$$J_{1}^{\theta}(T)=\sigma_{a,b,c}T+O\big(TQ^{-1/2}(\log T)^{2}\big).$$ Therefore, the asymptotic formula
$$I_{1,\theta}(T)=\sigma_{a,b,c}T+O(Q^{1/2+3a/2c+\varepsilon}+TQ^{-1/2}(\log T)^{2}+T^{1/2}Q^{1/2})$$ holds under the assumption of Conjecture \ref{conj11}.
\end{prop}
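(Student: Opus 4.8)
The plan is to exploit the decomposition \eqref{J1J9}, which reduces the first assertion to showing $J_3^{\theta}(T),\,J_4^{\theta}(T)\ll TQ^{-1/2}(\log T)^{2}$, the second then following on combining this with \eqref{I131} and the conditional bound for $J_2^{\theta}(T)$ of Lemma \ref{prop478}. Throughout one works on the diagonal locus $n_1^a=n_2^b n_3^c$, on which $n_1=n_2^{b/a}n_3^{c/a}\ge(n_2n_3)^{c/a}$ by $c\le b$. Since $\theta<c/2a\le b/2a$, this makes $n_2>\sqrt{bT/2\pi}$ or $n_3>\sqrt{cT/2\pi}$ already imply $n_1\gg T^{\theta}$, so that for a diagonal $\bfn$ and $T$ large the condition $\bfn\notin\mathcal B_{a,b,c,\theta}$ — equivalently $N_{\bfn,\theta}>T$ — amounts simply to $n_1\gg T^{\theta}$; more generally, writing $\Sigma(Y)=\sum_{n_1^a=n_2^bn_3^c,\ N_{\bfn,\theta}>Y}P_{\bfn}^{-1/2}$, the same comparison gives $\Sigma(Y)\ll\sum_{n_1^a=n_2^bn_3^c,\ n_1\gg Y^{\theta}}P_{\bfn}^{-1/2}$ for $Y\gg1$.

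Everything then hinges on the tail estimate
\[
\sum_{\substack{n_1^a=n_2^bn_3^c\\ n_1>Z}}P_{\bfn}^{-1/2}\ll Z^{-1/2}(\log 2Z)^{2}\qquad(Z\ge1).
\]
Granting it, one has $\Sigma(Y)\ll Y^{-\theta/2}(\log 2Y)^{2}$, whence $J_3^{\theta}(T)=T\,\Sigma(T)\ll TQ^{-1/2}(\log T)^{2}$; and writing $J_4^{\theta}(T)=\sum_{n_1^a=n_2^bn_3^c,\ N_{\bfn,\theta}\le T}N_{\bfn,\theta}P_{\bfn}^{-1/2}$, a dyadic split in the size of $N_{\bfn,\theta}$ together with $\sum_{N_{\bfn,\theta}\asymp2^{k}}N_{\bfn,\theta}P_{\bfn}^{-1/2}\ll2^{k}\Sigma(2^{k-1})\ll2^{k(1-\theta/2)}k^{2}$ and a geometric summation over $2^{k}\le T$ gives $J_4^{\theta}(T)\ll T^{1-\theta/2}(\log T)^{2}=TQ^{-1/2}(\log T)^{2}$. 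By a dyadic decomposition in $n_1$ and the identity $P_{\bfn}^{-1/2}=n_1^{-1/2}(n_2n_3)^{-1/2}$, the tail estimate in turn reduces to the level-set bound
\[
\sum_{\substack{n_1^a=n_2^bn_3^c\\ n_1\asymp X}}(n_2n_3)^{-1/2}\ll(\log 2X)^{2}\qquad(X\ge1).
\]

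The level-set bound is the heart of the matter, and it is there that the hypotheses are used. From $(a,b,c)=1$ one analyses the diagonal prime by prime: for each $p$ the triple $(v_p(n_1),v_p(n_2),v_p(n_3))$ ranges over the non-negative integer solutions of $ax=by+cz$, a rank-two semigroup whose generators are explicit. One deduces that $n_2$ must be a perfect $\gcd(a,c)$-th power, $n_3$ a perfect $\gcd(a,b)$-th power, and, for fixed $n_3$, the admissible $n_2$ a fixed multiple of the $\bigl(a/\gcd(a,b)\bigr)$-th powers (and symmetrically); so the diagonal is covered by boundedly many families, each parametrised by at most three variables in which $n_1$ and $P_{\bfn}$ are monomials. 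Because $a\nmid b$ and $a\nmid c$, the exponents $\gcd(a,b)$ and $\gcd(a,c)$ are proper divisors of $a$, hence at most $a/2$ (equivalently $a/\gcd(a,b),\ a/\gcd(a,c)\ge2$); this, with $a<c\le b$, is precisely what forces a Rankin-type estimate for the left side of the level-set bound to carry the exponent $-1/2$, the residual summation over the parameters on $n_1\asymp X$ costing only two powers of $\log X$. Verifying that this exponent is exactly $-1/2$ and that exactly two logarithms survive is, I expect, the main obstacle. It is also the point at which a relation $a\mid b$ or $a\mid c$ would break the argument: then the sub-family $n_3=1$, $n_1=n_2^{b/a}$ (say) carries a genuinely free variable, contributes $\asymp X^{a/2b}$ to the level-set sum, and upon reinsertion produces precisely the secondary term $C_{a,r,s}T^{1-\theta(1/2-a/2r)}$ of Theorem \ref{thmabcy} rather than an admissible error.

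Finally, combining $J_1^{\theta}(T)=\sigma_{a,b,c}T+O\bigl(TQ^{-1/2}(\log T)^{2}\bigr)$ with \eqref{I131} and with $J_2^{\theta}(T)\ll T^{1/2}Q^{1/2}+Q^{1/2+3a/2c+\varepsilon}$ from Lemma \ref{prop478} (valid under Conjecture \ref{conj11}) yields
\[
I_{1,\theta}(T)=\sigma_{a,b,c}T+O\bigl(Q^{1/2+3a/2c+\varepsilon}+TQ^{-1/2}(\log T)^{2}+T^{1/2}Q^{1/2}\bigr).
\]
The remaining ingredients — the dyadic decompositions, the reduction of $\bfn\notin\mathcal B_{a,b,c,\theta}$ to $n_1\gg T^{\theta}$, and this last assembly — are routine.
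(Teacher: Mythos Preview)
Your strategy is correct and, at its core, coincides with the paper's. Both proofs rest on the same decomposition \eqref{J1J9}, the observation \eqref{assumth} that on the diagonal $N_{\bfn,\theta}=2\pi n_{1,\theta}'^{1/\theta}$, and crucially the explicit parametrisation of solutions to $n_1^a=n_2^b n_3^c$. Your facts that $n_2$ is a perfect $(a,c)$-th power and $n_3$ a perfect $(a,b)$-th power are the first steps of that parametrisation; the paper carries it further to $n_2=r_2^{a_2}\prod_{j=1}^{A-1}d_j^{j(a,c)}$, $n_3=r_3^{a_3}\prod_{j=1}^{A-1}d_j^{\alpha_j(a,b)}$, $n_1=r_2^{b_2}r_3^{c_3}P_{\bfd}$, with $A=a/((a,b)(a,c))$ and square-free pairwise coprime $d_j$. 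The hypothesis $a\nmid b,\,a\nmid c$ then gives $a_2,a_3\ge2$, and one checks that in $M_{\bfd}^{-1/2}$ only $d_1$ can have exponent as large as $-1$, all other $d_j$ carrying exponent $\le-3/2$.

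The difference is organisational. The paper bounds $J_3^{\theta}$ and $J_4^{\theta}$ by summing directly over the parametrisation variables in a chosen order (splitting $J_3^{\theta}=J_{3,1}+J_{3,2}$ according to whether $r_3^{c_3}P_{\bfd}<Q$), with case analysis on the sign of the $r_3$-exponent. Your route via dyadic decomposition and the level-set bound $\sum_{n_1\asymp X}(n_2n_3)^{-1/2}\ll(\log 2X)^2$ is a cleaner packaging: the constraint $n_1\asymp X$ fixes $r_2$ up to a bounded-ratio interval, so the $r_2$-sum is $O(1)$, and then $r_3$, $d_1$ contribute at most one logarithm each while the remaining $d_j$ converge. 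This is precisely the verification you flagged as the main obstacle, and with the parametrisation in hand it goes through with no further ideas; it buys a more modular argument (one estimate feeds both $J_3^{\theta}$ and $J_4^{\theta}$) at the cost of not isolating the explicit secondary term that emerges in the divisible case of Proposition~\ref{propAA}. Two small imprecisions worth tidying: the parametrisation has $A+1$ variables, not three, though only three are potentially logarithmic; and the reduction of $N_{\bfn,\theta}>T$ to $n_1\gg T^{\theta}$ is in fact an exact equivalence $n_1>Q$, which you should use to avoid a spurious constant in the tail.
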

\begin{proof}
A straight substitution of the identity $n_{1}=n_{2}^{b/a}n_{3}^{c/a}$ into the sums pertaining to the definitions in (\ref{0.1212}) already delivers error terms of the shape $O(TQ^{-1/2+a/2c})$. Such an approach though does not further exploit the property of $n_{2}^{b}n_{3}^{c}$ being a perfect $a$-th power. To remedy this situation and obtain an estimate superior to that stemming from the aforementioned shortcut, parametrizing the equation at hand shall be a desideratum. For this purpose we then introduce \begin{equation}\label{a1a2a3}a_{2}=a/(a,b),\ \ \ \ b_{2}=b/(a,b),\end{equation} $$a_{3}=a/(a,c),\ \ \ \ c_{3}=c/(a,c),\ \ \ \ A=\frac{a}{(a,b)(a,c)},$$ 
wherein the property of the parameter $A$ being an integer stems from the coprimality condition $(a,b,c)=1$. We begin by noting in view of the divisibility conditions at the statement of the proposition that $a_{2}\neq 1\neq a_{3}.$ The main idea underlying the parametrization will have its reliance on the classification of the divisors $d | n_{2}$ (and similarly for $n_{3}$) according to whether the number $d^{b}$ shall or not be a perfect $a$-th power, such a condition amounting to the number $d$ being a perfect $a_{2}$-th power. If instead $d$ does not satisfy such a property, some divisibility relation between both $d$ and $n_{3}$ should then hold, and exploiting such a dependence among the divisors of both $n_{2}$ and $n_{3}$ shall ultimately lead to sharper conclusions. In order to put these ideas into effect we write
$$n_{2}=r_{2}^{a_{2}}\prod_{u=1}^{a_{2}-1}d_{u}^{u},\ \ \ \ \ \ n_{3}=r_{3}^{a_{3}}\prod_{v=1}^{a_{3}-1}f_{v}^{v},$$ where $d_{u}$ and $f_{v}$ denote squarefree numbers with the property that $(d_{u_{1}},d_{u_{2}})=1$ whenever $u_{1}\neq u_{2}$ and similarly for $f_{v}$. In view of the above definitions, we find it convenient to note that $a\nmid ub$ whenever $1\leq u\leq a_{2}-1.$ We also observe that for fixed $u$ such that $(a,c)\mid u$ and satisfying the above line of inequalities, there is a unique solution to the congruence
$$ub+\alpha_{u}c\equiv 0 \mmod{a},\ \ \  \ \ \ 1\leq \alpha_{u}\leq a_{3}-1.$$ 

By the preceding discussion it transpires that then there is some $1\leq v\leq a_{3}-1$ satisfying $f_{v}=d_{u}$ with $v=\alpha_{u}$ as above, a concomitant aspect of the coprimality condition $(a,b,c)=1$ entailing the necessity of the provisos $(a,c)| u$ and $(a,b)| \alpha_{u}.$ Likewise, an analogous argument may be employed to deduce, for $v$, the existence of some $1\leq u\leq a_{2}-1$ with the property that $d_{u}=f_{v}$. Therefore, one may parametrize the triples satisfying the equation by 
\begin{equation*}n_{1}=r_{2}^{b_{2}}r_{3}^{c_{3}}P_{\bfd},\ \ \ \ \ n_{2}=r_{2}^{a_{2}}\prod_{j=1}^{A-1}d_{j}^{j(a,c)},\ \ \ \ \ n_{3}=r_{3}^{a_{3}}\prod_{j=1}^{A-1}d_{j}^{\alpha_{j}(a,b)},\end{equation*}
where we defined for $\bfd=(d_{1},\ldots,d_{A-1})$ the products \begin{equation*}P_{\bfd}=\prod_{j=1}^{A-1}d_{j}^{(bj(a,c)+c\alpha_{j}(a,b))/a},\ \ \ \ \ \ M_{\bfd}=\prod_{j=1}^{A-1}d_{j}^{j(a,c)+\alpha_{j}(a,b)},\end{equation*} the latter parameter being introduced for prompt use and wherein
$$jb_{2}+\alpha_{j}c_{3}\equiv 0\mmod{A},\ \ \ \ \ \ 1\leq j\leq A-1,\ \ 1\leq \alpha_{j}\leq A-1.$$ Combining the above equations one then finds that
\begin{equation}\label{sigmapa}\sigma_{a,b,c}=\sum_{r_{2},r_{3}} r_{2}^{-(a_{2}+b_{2})/2}r_{3}^{-(a_{3}+c_{3})/2}\sum_{\bfd}P_{\bfd}^{-1/2}M_{\bfd}^{-1/2},\end{equation}
the inner sum running over tuples $\bfd$ comprising square-free pairwise coprime integers. We will first analyse the term $J_{3}^{\theta}(T)$. It might be worth noting by drawing the reader's attention to (\ref{n1prima}) that as a consequence of the inequalities among the coefficients and the underlying equation satisfied by the triple $\bfn$, one has that 
$$n_{1,\theta}'^{1/\theta}=a^{-1}n_{2}^{b/a\theta}n_{3}^{c/a\theta},$$ whence by recalling the assumptions on $\theta$ in the statement then it transpires by (\ref{NNN}) that
\begin{equation}\label{assumth} n_{1,\theta}'^{1/\theta}=\frac{1}{2\pi}N_{\bfn,\theta}=\max(n_{1,\theta}'^{1/\theta},n_{2}'^{2},n_{3}'^{2}).\end{equation} Equipped with such an observation and recalling (\ref{QQQQ}) we plug the above parametrization into (\ref{0.1212}) and sum over $r_{2}$ first to obtain
\begin{align}\label{J3111}J_{3}^{\theta}(T)=&T\sum_{r_{2}^{b_{2}}r_{3}^{c_{3}}P_{\bfd}\geq Q}r_{2}^{-(a_{2}+b_{2})/2}r_{3}^{-(a_{3}+c_{3})/2}P_{\bfd}^{-1/2}M_{\bfd}^{-1/2}=J_{3,1}(T,Q)+J_{3,2}(T,Q),
\end{align}
wherein $J_{3,1}(T,Q)$ denotes the sum over triples such that $r_{3}^{c_{3}}P_{\bfd}< Q$, the term $J_{3,2}(T,Q)$ comprising triples not satisfying the preceding inequality. In order to bound the first summand, which we denote by $J_{3,1}(T,Q)$ for convenience, it might be worth considering first the case when \begin{equation}\label{inver}-a_{3}/2+c_{3}a_{2}/2b_{2}-c_{3}/b_{2}> -1.\end{equation} 

Under such circumstances, it transpires that
\begin{align}\label{pirulet}J_{3,1}(T,Q)&\ll TQ^{-1/2-a_{2}/2b_{2}+1/b_{2}}\sum_{r_{3}^{c_{3}}P_{\bfd}< Q}P_{\bfd}^{-1/b_{2}+a_{2}/2b_{2}}r_{3}^{-a_{3}/2+c_{3}a_{2}/2b_{2}-c_{3}/b_{2}}M_{\bfd}^{-1/2}
\\
&\ll TQ^{-1/2-a_{3}/2c_{3}+1/c_{3}}\sum_{P_{\bfd}\leq Q}P_{\bfd}^{-1/c_{3}+a_{3}/2c_{3}}M_{\bfd}^{-1/2}\nonumber.\end{align} The reader may find it desirable to observe that the exponent of the factor $d_{1}$ involved in the term $M_{\bfd}^{-1/2}$ is at most $-1$, the exponents appertaining to the rest of the factors being at most $-3/2$. We also note that in the above sum then 
$$P_{\bfd}^{-1/c_{3}+a_{3}/2c_{3}}\ll 1+Q^{-1/c_{3}+a_{3}/2c_{3}}.$$ We observe that the previous estimate encompasses the cases when the exponent on the left side is both non-negative and negative. The preceding discussion then yields
\begin{equation}\label{j31}J_{3,1}(T,Q)\ll TQ^{-1/2-a/2c+1/c_{3}}(\log Q)+TQ^{-1/2}(\log T)\ll TQ^{-1/2}(\log Q),\end{equation} the last step stemming from the assumption $a\nmid c$, it in turn entailing the inequality $$\frac{1}{c_{3}}=\frac{(a,c)}{c}\leq \frac{a}{2c}.$$ 

If, on the contrary, condition (\ref{inver}) does not hold then
\begin{equation*}J_{3,1}(T,Q)\ll TQ^{-1/2-a_{2}/2b_{2}+1/b_{2}}(\log Q)\sum_{P_{\bfd}\leq Q}P_{\bfd}^{-1/b_{2}+a_{2}/2b_{2}}M_{\bfd}^{-1/2},\end{equation*} and an argument reminiscent of the above yields \begin{equation}\label{ec2424}J_{3,1}(T,Q)\ll TQ^{-1/2}(\log Q)^{2}+TQ^{-1/2-a/2b+1/b_{2}}(\log Q)^{2}\ll TQ^{-1/2}(\log Q)^{2},\end{equation} the last inequality being a consequence of the bound $b_{2}\geq 2b/a$ stemming from the fact that $a\nmid b.$ It might be pertinent to clarify that the extra factor of $(\log Q)$ has been added to encompass the case when the left side of (\ref{inver}) is $-1$.

Likewise, an analogous argument to the one employed above then reveals that
\begin{align*}J_{3,2}(T,Q) 
 \ll& TQ^{-1/2-a_{3}/2c_{3}+1/c_{3}}\sum_{P_{\bfd}\leq Q}P_{\bfd}^{-1/c_{3}+a_{3}/2c_{3}}M_{\bfd}^{-1/2}+T\sum_{P_{\bfd}\geq Q}P_{\bfd}^{-1/2}M_{\bfd}^{-1/2}.
\end{align*}
It seems appropiate to note that in the second summand of the above equation one has
$P_{\bfd}^{-1/2}\ll Q^{-1/2}.$ Therefore, the above observation in conjunction with the discussion following (\ref{pirulet}) yields the bound
$$J_{3,2}(T,Q)\ll TQ^{-1/2}\log Q,$$ a combination of the preceding estimates thereby delivering
\begin{equation*}\label{J3J3J3}J_{3}^{\theta}(T)\ll  TQ^{-1/2}(\log Q)^{2}.\end{equation*}

We next focus on the term $J_{4}^{\theta}(T)$ in (\ref{0.1212}). We shall use the parametrization employed in the analysis of $J_{3}^{\theta}(T)$ herein as well and observe that as was noted above one necessarily has that the tuples involved in the sum in the definition of $J_{4}^{\theta}(T)$ have the property that $2\pi n_{1,\theta}'^{1/\theta}=N_{\bfn,\theta}.$ Therefore, in view of (\ref{n1prima}) and (\ref{lades}) and recalling (\ref{a1a2a3}) it transpires that
\begin{align*}J_{4}^{\theta}(T)\ll &
\sum_{r_{2}^{b_{2}}r_{3}^{c_{3}}P_{\bfd}\leq Q}r_{2}^{b_{2}/\theta-(b_{2}+a_{2})/2}r_{3}^{c_{3}/\theta-(c_{3}+a_{3})/2}P_{\bfd}^{1/\theta-1/2}M_{\bfd}^{-1/2}.
\end{align*}
Recalling (\ref{QQQQ}) and summing over $r_{2}$ first we find that 
\begin{align*}J_{4}^{\theta}(T)&\ll TQ^{-1/2-a_{2}/2b_{2}+1/b_{2}}\sum_{r_{3}^{c_{3}}P_{\bfd}\leq Q}r_{3}^{a_{2}c_{3}/2b_{2}-c_{3}/b_{2}-a_{3}/2}P_{\bfd}^{a_{2}/2b_{2}-1/b_{2}}M_{\bfd}^{-1/2}
,\end{align*} where the reader may find it useful to recall that the right side of the above line appeared in (\ref{pirulet}). Consequently, equation (\ref{j31}) in conjunction with (\ref{ec2424}) and the above line of inequalities yields \begin{equation*}\label{domi}J_{4}^{\theta}(T)\ll TQ^{-1/2}(\log Q)^{2}.\end{equation*} The preceding discussion combined with Lemma \ref{prop478} and  (\ref{I131}) and (\ref{J1J9}) delivers the proof.
\end{proof}
We shift our attention to the perusal of the instance when either $a\mid b$ or $a\mid c$ to the end of deriving an analogous formula comprising lower order terms. For such purposes, it seems convenient to introduce for fixed $\theta$ the parameter \begin{equation}\label{sthe}\lambda_{\theta}=\frac{2}{\theta}-1,\end{equation} and define for tuples $(a,r,s)\in\mathbb{N}^{3}$ not of the shape $(1,r,r)$ or $(r,r,s)$ the  constants \begin{equation}\label{Cars}C_{a,r,s}=\frac{4ra}{\theta(r-a)(\lambda_{\theta}r+a)}\zeta(as/2r+a/2)(2\pi/a)^{\theta(1/2-a/2r)}.\end{equation} 
\begin{prop}\label{propAA}
Let $a,b,c\in\mathbb{N}$ such that $a<c\leq b$ and $a\geq 2$ with either $a\mid b$ or $a\mid c$ and satisfying $(a,b,c)=1$. Let $\theta>0$ for which $\theta\leq\min(c/2a,1\big).$ Then if $\{r,s\}=\{b,c\}$ and such that $a\mid r$ and $a\nmid s$ one has
$$J_{1}^{\theta}(T)=K_{a,b,c}T-C_{a,r,s}T^{1-\theta(1/2-a/2r)}+O(T^{1-\theta/2}),$$ wherein \begin{equation}\label{Kabc}K_{a,b,c}=\zeta\big((a_{2}+b_{2})/2\big)\zeta\big((a_{3}+c_{3})/2\big).\end{equation} Under the assumption of Conjecture \ref{conj11} then it transpires that
$$I_{1,\theta}(T)=K_{a,b,c}T-C_{a,r,s}T^{1-\theta(1/2-a/2r)}+O(T^{1-\theta/2}+T^{1/2+\theta/2}+T^{\theta/2+3\theta a/2c+\varepsilon}).$$
\end{prop}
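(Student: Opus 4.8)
The plan is to mirror the structure of the proof of Proposition \ref{lem602z}, reusing the parametrization of the solutions of $n_{1}^{a}=n_{2}^{b}n_{3}^{c}$, but now retaining the main term that arises from the divisibility relation rather than discarding it as a negligible contribution. Recall the decompositions $I_{1,\theta}(T)=J_{1}^{\theta}(T)+J_{2}^{\theta}(T)$ from (\ref{I131}) and $J_{1}^{\theta}(T)=\sigma_{a,b,c}T-J_{3}^{\theta}(T)-J_{4}^{\theta}(T)$ from (\ref{J1J9}). First I would observe that when $a\mid r$ but $a\nmid s$ the integers $a_{2},a_{3}$ of (\ref{a1a2a3}) satisfy: exactly one of them equals $1$ (the one attached to $r$), so $A=1$ and the tuple $\bfd$ disappears from the parametrization. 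Thus the solution set collapses to $n_{1}=r_{2}^{b_{2}}r_{3}^{c_{3}}$, $n_{2}=r_{2}^{a_{2}}$, $n_{3}=r_{3}^{a_{3}}$ with one of the exponent-pairs being $(1,r/a)$; substituting into (\ref{sigmapa}) gives $\sigma_{a,b,c}=\zeta((a_{2}+b_{2})/2)\zeta((a_{3}+c_{3})/2)=K_{a,b,c}$, which already identifies the leading constant.

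Next I would evaluate $J_{3}^{\theta}(T)+J_{4}^{\theta}(T)$ precisely rather than just bounding it. With the parametrization above and using (\ref{assumth}) (so that $N_{\bfn,\theta}=2\pi n_{1,\theta}'^{1/\theta}$ always), both $J_{3}^{\theta}$ and $J_{4}^{\theta}$ become, up to the constraint encoded by $Q$, sums of the form $T\sum_{r_{2}^{b_{2}}r_{3}^{c_{3}}\,(\gtrless)\, Q} r_{2}^{-(a_{2}+b_{2})/2}r_{3}^{-(a_{3}+c_{3})/2}$ and $\sum r_{2}^{b_{2}/\theta-(a_{2}+b_{2})/2}r_{3}^{c_{3}/\theta-(c_{3}+a_{3})/2}$ respectively. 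For the index attached to $s$ (where $a\nmid s$, hence $a_{\ast}\geq 2$ and the exponent $-(a_{\ast}+s_{\ast})/2$ is $<-1$) the corresponding series converges absolutely and contributes only an $O(T^{1-\theta/2})$-type tail once the variable attached to $r$ is also summed; the care needed here is exactly the convergence bookkeeping already carried out in (\ref{j31})--(\ref{ec2424}). For the index attached to $r$, say $r_{2}$ with $(a_{2},b_{2})=(1,r/a)$, the exponent is $-(1+r/a)/2$, and summing $T\sum_{r_{2}> (\text{threshold})} r_{2}^{-(1+r/a)/2}$ and the companion sum $\sum_{r_{2}\leq(\text{threshold})} r_{2}^{r/a\theta-(1+r/a)/2}$ by comparison with the corresponding integrals produces the genuine power $T^{1-\theta(1/2-a/2r)}$ together with the constant; one then checks that $\zeta(as/2r+a/2)$ and the rational prefactor combine into exactly $C_{a,r,s}$ of (\ref{Cars}), with error $O(T^{1-\theta/2})$ from Euler--Maclaurin. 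Collecting, $J_{3}^{\theta}(T)+J_{4}^{\theta}(T)=(\sigma_{a,b,c}-K_{a,b,c})T+C_{a,r,s}T^{1-\theta(1/2-a/2r)}+O(T^{1-\theta/2})$; since $\sigma_{a,b,c}=K_{a,b,c}$ the $T$-terms in $J_{3}^{\theta}+J_{4}^{\theta}$ actually cancel internally and one is left with $J_{1}^{\theta}(T)=K_{a,b,c}T-C_{a,r,s}T^{1-\theta(1/2-a/2r)}+O(T^{1-\theta/2})$, which is the first display.

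For the second display I would simply add back the off-diagonal term: under Conjecture \ref{conj11}, Lemma \ref{prop478} gives $J_{2}^{\theta}(T)\ll T^{1/2}Q^{1/2}+Q^{1/2+3a/2c+\varepsilon}\ll T^{1/2+\theta/2}+T^{\theta/2+3\theta a/2c+\varepsilon}$ after inserting $Q=(aT/2\pi)^{\theta}$, and combining with $I_{1,\theta}(T)=J_{1}^{\theta}(T)+J_{2}^{\theta}(T)$ yields the stated asymptotic.

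\textbf{Main obstacle.} The delicate point is the exact constant computation: extracting the power $T^{1-\theta(1/2-a/2r)}$ with the right coefficient requires passing from the discrete sum over $r_{2}$ constrained by $r_{2}^{b_{2}}r_{3}^{c_{3}}\leq Q$ to a clean closed form, which forces one to (i) sum over $r_{3}$ first, producing the $\zeta(as/2r+a/2)$ factor, (ii) handle the boundary term at $r_{2}^{b_{2}}\asymp Q$ in both $J_{3}^{\theta}$ and $J_{4}^{\theta}$ and verify that the two boundary contributions are the ones that assemble into $C_{a,r,s}$ (the pole-type cancellation between "$\sum$ above $Q$" times $T$ and "$\sum$ below $Q$" of the heavier weight), and (iii) keep the Euler--Maclaurin remainder below $T^{1-\theta/2}$, which is tight and constrains why one needs $\theta\le\min(c/2a,1)$. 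Everything else is a routine adaptation of the estimates already in Proposition \ref{lem602z}.
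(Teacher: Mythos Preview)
Your proposal is correct and follows essentially the same route as the paper: collapse the parametrization via $A=1$ so that $\sigma_{a,b,c}=K_{a,b,c}$, evaluate $J_3^\theta$ and $J_4^\theta$ as double sums over $(r_2,r_3)$, extract the power $TQ^{-1/2+a/2r}$ from the variable attached to $r$, and let the remaining sum over the $s$-variable converge to $\zeta(as/2r+a/2)$; then add $J_2^\theta$ via Lemma~\ref{prop478}. One small correction to your ``main obstacle'' summary: in the paper the order is reversed from what you wrote---one sums over the $r$-variable \emph{first} (its threshold depends on the $s$-variable), producing $Q^{-1/2+a/2r}$ times a weight in $r_3$, and only then does the residual $r_3$-sum collapse to the zeta value; the constraint $r_2^{b_2}r_3^{c_3}\gtrless Q$ links the two variables, so the $\zeta$-factor cannot be pulled out before the inner summation is performed.
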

Before starting the proof, it seems appropiate to clarify that in view of the coprimality condition among the coefficients, the situations $a\mid b$ and $a\mid c$ are mutually exclusive.
\begin{proof}

Upon recalling (\ref{a1a2a3}), it transpires that the parametrization of the corresponding underlying equation in (\ref{0.1212}) would then be \begin{equation*}n_{1}=r_{2}^{b_{2}}r_{3}^{c_{3}},\ \ \ \ \ \ \ \ n_{2}=r_{2}^{a_{2}},\ \ \ \ \ \ \ \ n_{3}=r_{3}^{a_{3}},\end{equation*}whence on recalling (\ref{sigmapa}) it is apparent in this context that $\sigma_{a,b,c}=K_{a,b,c}$. We begin by examining first the case $a\mid b$ and $a\nmid c$, the aforementioned proviso in conjunction with the condition $(a,b,c)=1$ entailing $$a_{2}=1,\ \ \ b_{2}=b/a,\ \ \ \ a_{3}=a,\ \ \ \ c_{3}=c,\ \ \ \ \ (a,c)=1.$$ Moreover, in view of the restrictions on $\theta$ in the statement of the proposition then a routinary argument in the same vein as in equation (\ref{assumth}) yields $$J_{3}^{\theta}(T)=T\sum_{r_{2}^{b/a}r_{3}^{c}\geq Q}r_{2}^{-(a+b)/2a}r_{3}^{-(a+c)/2}.$$ 

It seems worth noting that on recalling the definition of $J_{3,1}(T,Q)$ in (\ref{J3111}) one has in this context
\begin{align*}J_{3,1}(T,Q)=&\frac{2a}{b-a}TQ^{-1/2+a/2b}\sum_{r_{3}^{c}< Q}r_{3}^{-a/2-ac/2b}+O\Big(TQ^{-1/2-a/2b}\sum_{r_{3}^{c}< Q}r_{3}^{ac/2b-a/2}\Big),
\end{align*}whence regardless of the size of the exponent cognate to $r_{3}$ in the above error term one has 
$$TQ^{-1/2-a/2b}\sum_{r_{3}^{c}< Q}r_{3}^{ac/2b-a/2}\ll TQ^{-1/2-a/2b}(\log T)+TQ^{-1/2-a/2c+1/c}\ll TQ^{-1/2}.$$
Moreover, we remark that the exponent of $r_{3}$ in the main term of the above expression for $J_{3,1}(T,Q)$ is smaller than $-1$, whence summing over $r_{3}$ yields
$$J_{3,1}(T,Q)=\frac{2a}{b-a}\zeta(ac/2b+a/2)TQ^{-1/2+a/2b}+O(TQ^{-1/2}).$$Likewise, we remind the reader of the definition of $J_{3,2}(T,Q)$ in (\ref{J3111}) and observe that under the above assumptions one has that
$$J_{3,2}(T,Q)\ll T\sum_{r_{3}\geq Q^{1/c}}r_{3}^{-(a+c)/2}\ll TQ^{-1/2-a/2c+1/c}\ll TQ^{-1/2},$$wherein the last inequality we utilised the fact that $a\geq 2$. Combining the above equations then delivers
\begin{equation}\label{ecuJ3}J_{3}^{\theta}(T)=\frac{2a}{b-a}\zeta(ac/2b+a/2)TQ^{-1/2+a/2b}+O(TQ^{-1/2}).\end{equation}

We recall (\ref{QQQQ}), (\ref{lades}), (\ref{0.1212}) and (\ref{assumth}) to the end of noting for the the perusal of $J_{4}^{\theta}(T)$ and in the same vein as therein that as a consequence of the restriction on $\theta$ in the statement of the proposition then one has 
\begin{align*}J_{4}^{\theta}(T)&=\frac{2\pi}{a}\sum_{r_{2}^{b/a}r_{3}^{c}\leq Q}r_{2}^{b/a\theta-(b+a)/2a}r_{3}^{c/\theta-(a+c)/2}
\\
&=\frac{2a}{\lambda_{\theta}b+a}TQ^{-1/2+a/2b}\sum_{r_{3}^{c}\leq Q}r_{3}^{-ac/2b-a/2}+O\Big(TQ^{-1/2-a/2b}\sum_{r_{3}^{c}\leq Q}r_{3}^{ac/2b-a/2}\Big).\nonumber
\end{align*}
It seems pertinent to remark that in view of the condition $a\geq 2$ earlier assumed, it transpires that the above error term is $O(TQ^{-1/2}).$ Moreover, the same proviso assures that the exponent cognate to $r_{3}$ in the above main term is smaller than $-1$, whence the preceding discussion in conjunction with the above equation delivers
$$J_{4}^{\theta}(T)=\frac{2a\zeta\big(ac/2b+a/2\big)}{\lambda_{\theta}b+a}TQ^{-1/2+a/2b}+O(TQ^{-1/2}),$$ which combined with (\ref{ecuJ3}), Lemma \ref{prop478} and  (\ref{I131}) and (\ref{J1J9}) yields the desired result whenever $a\mid b$. Likewise, the same argument for the instance $a\mid c$ delivers an analogous formula with $b$ replacing $c$. The previous observation then completes the proof of the lemma.
\end{proof}

\section{Two lower order terms stemming from the diagonal contribution}\label{case1}
The discussion herein shall be devoted to the case $a=1$, an intrincate analysis involving more sophisticated techniques being required to the end of deriving a formula comprising two secondary terms, it being profitable presenting first a technical lemma that shall be employed on further occasions. For such purposes we introduce the meromorphic functions
\begin{equation}\label{rris}G_{b,c}(s)=\zeta\Big(bs+\frac{1+b}{2}\Big)\zeta\Big(cs+\frac{1+c}{2}\Big),\ \ \ \ \ \ \ \ \ \ \ \ \ G_{1,b,c}(s)=\zeta(s+1)G_{b,c}(s).\end{equation}
We also recall (\ref{QQQQ}) and define the integrals along the critical line
\begin{equation}\label{Mbc}L_{b,c}(Q)=\int_{-1/2-iQ^{2}}^{-1/2+iQ^{2}}G_{b,c}(s)Q^{s}\frac{ds}{s},\ \ \ \ \ \ L_{1,b,c}(Q)=\int_{-1/2-iQ^{2}}^{-1/2+iQ^{2}}G_{1,b,c}(s)Q^{s}\frac{ds}{s}.\end{equation}

\begin{lem}\label{lem6.1}
Let $b,c\in\mathbb{N}$ such that $1\leq c\leq b$. One has the estimates
$$L_{b,c}(Q)\ll Q^{-1/2}(\log Q)^{2},\ \ \ \ \ \ \ L_{1,b,c}(Q)\ll Q^{-1/2}(\log Q)^{13/4},$$ and on denoting $\xi_{m}=(-1)^{m}$ then for any $\delta>-1/2$ it follows that
\begin{align*}\int_{-1/2+i\xi_{m}Q^{2}}^{\delta+i\xi_{m}Q^{2}}G_{b,c}(s)Q^{s}\frac{ds}{s}\ll& Q^{\delta-4/3},\ \ \ \ \ \ \int_{-1/2+i\xi_{m}Q^{2}}^{\delta+i\xi_{m}Q^{2}}G_{1,b,c}(s)Q^{s}\frac{ds}{s}\ll Q^{\delta-1},\ \ \ \ \ m=1,2.
\end{align*}  
\end{lem}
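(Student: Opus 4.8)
The plan is to estimate these contour integrals by bounding the integrand using standard convexity estimates for $\zeta$ on vertical lines and then integrating in $t$. First I would record the relevant mean-value and pointwise bounds: on the line $\mathrm{Re}(s)=-1/2$ one has $bs+(1+b)/2 = 1/2 + ibt$, so $\zeta(bs+(1+b)/2)$ and $\zeta(cs+(1+c)/2)$ are $\zeta$ evaluated on the critical line, and for $L_{1,b,c}$ there is the extra factor $\zeta(s+1)=\zeta(1/2+it)$ as well. On the line $\mathrm{Re}(s)=-1/2$ the factor $Q^s$ contributes $Q^{-1/2}$, so $L_{b,c}(Q)\ll Q^{-1/2}\int_{-Q^2}^{Q^2}\lvert\zeta(1/2+ibt)\zeta(1/2+ict)\rvert\frac{dt}{\lvert -1/2+it\rvert}$, and similarly for $L_{1,b,c}$ with a product of three critical-line zeta values. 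The weight $1/\lvert s\rvert \asymp 1/(1+\lvert t\rvert)$ makes the integral essentially logarithmic in nature after one applies mean-value bounds.

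The key step for the first estimate is to split the range $\lvert t\rvert\le Q^2$ dyadically into $\lvert t\rvert\asymp U$ with $U$ running over powers of $2$ up to $Q^2$, handle the bounded range $\lvert t\rvert\ll 1$ trivially, and on each dyadic block bound $\int_{\lvert t\rvert\asymp U}\lvert\zeta(1/2+ibt)\zeta(1/2+ict)\rvert\,dt \ll U(\log U)$ by Cauchy--Schwarz together with the classical second moment $\int_0^{X}\lvert\zeta(1/2+it)\rvert^2\,dt\ll X\log X$ (Titchmarsh \cite[Theorem 7.3]{Tit}), after a linear change of variables absorbing the constants $b,c$. Dividing by the weight $\asymp U$ gives a contribution $\ll \log U$ from each block, and summing over the $O(\log Q)$ dyadic scales yields $L_{b,c}(Q)\ll Q^{-1/2}(\log Q)^2$. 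For $L_{1,b,c}(Q)$ the same scheme applies but with three critical-line zeta factors; here I would use Hölder with exponents tuned to the known fourth moment $\int_0^X\lvert\zeta(1/2+it)\rvert^4\,dt\ll X(\log X)^4$ (Heath-Brown \cite{Heath}) — writing $\lvert\zeta_1\zeta_2\zeta_3\rvert$ as a product and distributing so that each factor is controlled by either the second or fourth moment — to obtain a per-block bound of the shape $U(\log U)^{9/4}$, hence after dividing by the weight and summing the $O(\log Q)$ scales one gets $(\log Q)^{13/4}$. The precise bookkeeping of the logarithmic powers is where the exponent $13/4$ is pinned down, and I would present it as a Hölder split with weights $(1/4,1/4,1/2)$ or similar.

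For the horizontal segments at height $\lvert t\rvert = Q^2$ (with $s=\sigma\pm iQ^2$, $-1/2\le\sigma\le\delta$), the plan is purely pointwise: on such a segment $\lvert Q^s\rvert = Q^\sigma\le Q^\delta$ and $1/\lvert s\rvert \asymp Q^{-2}$, while each zeta factor is estimated by the convexity (or subconvexity) bound $\zeta(1/2+i\tau)\ll \tau^{1/6+\varepsilon}$, so for $G_{b,c}(s)$, which is a product of two zeta values at arguments of modulus $\asymp Q^2$, one has $\lvert G_{b,c}(s)\rvert \ll (Q^2)^{1/3+\varepsilon}=Q^{2/3+\varepsilon}$; multiplying by $Q^\delta$ and the length $\ll 1$ of the $\sigma$-range and by $Q^{-2}$ from $1/\lvert s\rvert$ gives $\ll Q^{\delta - 2 + 2/3+\varepsilon}=Q^{\delta-4/3+\varepsilon}$, which one absorbs into $Q^{\delta-4/3}$ (the $\varepsilon$ being harmless, or one uses the stronger known subconvexity exponent to kill it cleanly, but even the trivial bound $\zeta(1/2+i\tau)\ll\tau^{1/4}$ suffices: two factors give $Q^{-2}\cdot Q^{1/2}\cdot Q^\delta=Q^{\delta-3/2}\ll Q^{\delta-4/3}$ — in fact the convexity bound $\tau^{1/6}$ is not even needed). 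For $G_{1,b,c}(s)$ there is the third factor $\zeta(s+1)=\zeta(1/2+\sigma+iQ^2)$ which, by the same convexity bound applied near the critical strip, contributes another $Q^{1/3+\varepsilon}$ (or $Q^{1/2}$ trivially), so one gets $\ll Q^{\delta-2+1+\varepsilon}=Q^{\delta-1+\varepsilon}$, i.e.\ $\ll Q^{\delta-1}$ after adjusting; here one should note $\zeta(s+1)$ has a pole at $s=0$, which lies on none of these segments, so there is no issue on the contours themselves.

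The main obstacle is the careful distribution of logarithmic factors in the $L_{1,b,c}$ estimate: getting exactly $(\log Q)^{13/4}$ rather than a cruder power requires choosing the Hölder exponents so that the fourth-moment $(\log)^4$ is shared appropriately among the three zeta factors, and it is worth double-checking that the linear substitutions $t\mapsto bt$, $t\mapsto ct$ (with $b,c$ fixed constants) do not disturb the mean-value bounds — they only affect implied constants. The horizontal-segment estimates are genuinely routine and robust (any subconvex or even convex or trivial bound on $\zeta$ on the critical line, combined with the $Q^{-2}$ decay of $1/\lvert s\rvert$, gives far more room than needed), so I would dispatch those first and spend the bulk of the argument on the dyadic mean-value computation for the two integrals along $\mathrm{Re}(s)=-1/2$.
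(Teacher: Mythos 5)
Your treatment of $L_{b,c}(Q)$ and of the horizontal segments is essentially the paper's argument (the dyadic block decomposition does the same work as the paper's integration by parts against the second-moment asymptotic, and for the horizontal segments both of you invoke a subconvexity bound $\zeta(1/2+it)\ll t^{1/6-\tau_0}$, $\tau_0>0$, to clear the $\varepsilon$). One small correction there: your parenthetical claim that the trivial bound $\zeta(1/2+it)\ll t^{1/4}$ already suffices for $G_{b,c}$ rests on a miscount --- two such factors on $\lvert t\rvert=Q^{2}$ give $(Q^{2})^{1/4}\cdot(Q^{2})^{1/4}=Q$, not $Q^{1/2}$, so that route only yields $Q^{\delta-1}$, which is larger than $Q^{\delta-4/3}$; the $t^{1/6}$-type exponent is genuinely needed.

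The real gap is in the bound for $L_{1,b,c}(Q)$. You propose to reach a per-block estimate of shape $U(\log U)^{9/4}$ by H\"older, distributing the three critical-line zeta factors between the second and fourth moments, for instance with exponents $(4,4,2)$. But that split gives
\begin{equation*}
\Big(\int_{\lvert t\rvert\asymp U}\lvert\zeta\rvert^{4}\Big)^{1/4}\Big(\int_{\lvert t\rvert\asymp U}\lvert\zeta\rvert^{4}\Big)^{1/4}\Big(\int_{\lvert t\rvert\asymp U}\lvert\zeta\rvert^{2}\Big)^{1/2}\ll\big(U(\log U)^{4}\big)^{1/2}\big(U\log U\big)^{1/2}=U(\log U)^{5/2},
\end{equation*}
not $U(\log U)^{9/4}$. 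More generally, using $p$-th moment bounds of strength $T(\log T)^{p^{2}/4}$ with $\sum 1/p_{i}=1$ produces $U(\log U)^{\sum p_{i}/4}$; restricting $p_{i}\in\{2,4\}$ forces $(p_{1},p_{2},p_{3})$ to be a permutation of $(2,4,4)$, so the best H\"older can give here is $U(\log U)^{5/2}$, and summing over $O(\log Q)$ dyadic scales you obtain only $L_{1,b,c}(Q)\ll Q^{-1/2}(\log Q)^{7/2}$, weaker than the stated $(\log Q)^{13/4}$. The exponent $9/4=3^{2}/4$ is exactly the cubic-moment exponent, and the paper gets it not from H\"older with the second and fourth moments but by invoking the nontrivial theorem of Bettin, Chandee and Radziwi\l\l\ \cite{Bet2}, namely $\int_{0}^{T}\lvert\zeta(1/2+it)\rvert^{3}\,dt\ll T(\log T)^{9/4}$, together with an AM--GM type reduction of $\lvert\zeta_{1}\zeta_{2}\zeta_{3}\rvert$ to cubic moments and integration by parts. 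That third-moment bound cannot be deduced from the second and fourth moments by H\"older (which only gives $(\log T)^{5/2}$), so the $13/4$ in the Lemma genuinely depends on \cite{Bet2}; your plan needs to replace the H\"older step by a direct appeal to that result, or else settle for the weaker exponent $7/2$.
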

\begin{proof}
We begin the proof by analysing the first integral and observing that
$$L_{b,c}(Q)=Q^{-1/2}\int_{-Q^{2}}^{Q^{2}}\zeta(1/2+bit)\zeta(1/2+cit)Q^{it}\frac{dt}{(-1/2+it)},$$whence an application of Holder's inequality yields
$$L_{b,c}(Q)\ll Q^{-1/2}\Big(1+\int_{1}^{Q^{2}}\lvert \zeta(1/2+it)\lvert^{2}\frac{dt}{t}\Big).$$ It then transpires that a combination of the asymptotic formula for the second moment due to Hardy-Littlewood (see Titchmarsh \cite[Theorem 7.3]{Tit}) in conjunction with integration by parts and the above inequality yields
$$L_{b,c}(Q)\ll Q^{-1/2}\Bigg(\log Q+\int_{1}^{Q^{2}}\frac{\log t}{t}dt\Bigg)\ll Q^{-1/2}(\log Q)^{2},$$ as desired. Likewise, following an analogous procedure and upon recalling the estimate $$\int_{0}^{T}\lvert \zeta(1/2+it)\rvert^{3}dt\ll T(\log T)^{9/4}$$ derived in \cite{Bet2} enables one to deduce
\begin{align*}L_{1,b,c}(Q)\ll Q^{-1/2}\Big(1+\int_{1}^{Q^{2}}\lvert \zeta(1/2+it)\lvert^{3}\frac{dt}{t}\Big)&\ll Q^{-1/2}(\log Q)^{13/4}.
\end{align*}
For the second estimate we begin by recalling the bound
\begin{equation*}\label{Bourg}\zeta(1/2+it)\ll t^{1/6-\tau}\end{equation*} for some explicit $\tau>0$ (see Titchmarsh \cite[Theorem 5.18]{Tit}), an immediate consequence of which being on the horizontal line $\text{Im}(s)=\xi_{m}Q^{2}$ at hand that
$$G_{b,c}(s)\ll Q^{2/3}, \ \ \ \ \ \ G_{1,b,c}(s)\ll Q.$$ The preceding observation thus yields
$$\int_{-1/2+i\xi_{m}Q^{2}}^{\delta+i\xi_{m}Q^{2}}G_{b,c}(s)Q^{s}\frac{ds}{s}\ll Q^{-4/3+\delta},\ \ \ \int_{-1/2+i\xi_{m}Q^{2}}^{\delta+i\xi_{m}Q^{2}}G_{1,b,c}(s)Q^{s}\frac{ds}{s}\ll Q^{-1+\delta},\ \ \ \  m=1,2,$$
which completes the proof.

\end{proof}

We next present a technical lemma estimating the error term emerging from the application of Perron's formula that shall be utilised on several ocassions. An insightful inspection of the statement of Lemma \ref{priros} reveals that one may translate $T$ to $T+CTQ^{-1}$ for any constant $C>0$ if necessary, the error terms stemming from such a manoeuvre being $O\big(TQ^{-1/2}+(TQ)^{1/2}\big)$ in the setting underlying Theorems \ref{prop3k4} and \ref{ppop} and $O\big(TQ^{-1/2}+T^{1/2}Q\big)$ in the context of Theorem \ref{thm10}. In view of the above considerations it is then apparent that one may assume 
\begin{equation}\label{fracc}
\|Q\|\gg 1,
\end{equation}
such a proviso alleviating some tedious computations that would have otherwise been required in the analysis of the error term arising from the aforementioned use of Perron's formula. It then seems worth defining the sums
\begin{equation}\label{pries}R_{1}(T,Q)= T\sum_{Q/2<n_{2}^{b}n_{3}^{c}<2Q}n_{2}^{-(1+b)/2}n_{3}^{-(1+c)/2}\min\big(1,Q^{-2}\big\rvert\log\big(n_{2}^{b}n_{3}^{c}Q^{-1}\big)\big\lvert^{-1}\big)\end{equation} and 
\begin{equation}\label{priest}R_{1,1}(T,Q)= T\sum_{Q/2<n_{1}n_{2}^{b}n_{3}^{c}<2Q}n_{1}^{-1}n_{2}^{-(1+b)/2}n_{3}^{-(1+c)/2}\min\big(1,Q^{-2}\big\rvert\log\big(n_{1}n_{2}^{b}n_{3}^{c}Q^{-1}\big)\big\lvert^{-1}\big).\end{equation}

\begin{lem}\label{lem6.2}
Assume that $\|Q\|\gg 1$. Then with the above notation, for $b,c\in\mathbb{N}$ satisfying $b\geq c$ one has that
$$R_{1}(T,Q)\ll TQ^{-1}(\log Q),\ \ \ \ \ \ \ \ \ \ R_{1,1}(T,Q)\ll TQ^{-1}(\log Q)^{2}.$$
\end{lem}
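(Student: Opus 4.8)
\textbf{Proof plan for Lemma \ref{lem6.2}.}

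The plan is to bound each of the sums $R_{1}(T,Q)$ and $R_{1,1}(T,Q)$ by splitting the range of the variables according to the distance of $n_{2}^{b}n_{3}^{c}$ (resp. $n_{1}n_{2}^{b}n_{3}^{c}$) from $Q$. First I would observe that in $R_{1}(T,Q)$ the summation variables are confined to $Q/2<n_{2}^{b}n_{3}^{c}<2Q$, so in particular $n_{2},n_{3}\ll Q^{1/c}$, and that the weight $n_{2}^{-(1+b)/2}n_{3}^{-(1+c)/2}$ is $\ll (n_{2}^{b}n_{3}^{c})^{-1/2}\asymp Q^{-1/2}$ up to a bounded power of $n_{2}$; more precisely $n_{2}^{-(1+b)/2}n_{3}^{-(1+c)/2}=(n_{2}^{b}n_{3}^{c})^{-1/2}n_{2}^{-1/2}n_{3}^{-1/2}\ll Q^{-1/2}n_{2}^{-1/2}n_{3}^{-1/2}$. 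I would then separate the terms where $\lvert\log(n_{2}^{b}n_{3}^{c}Q^{-1})\rvert\leq Q^{-2}$, for which the minimum equals $1$: the number of integers $m=n_{2}^{b}n_{3}^{c}$ in the resulting window $\lvert m-Q\rvert\ll Q^{-1}$ is $O(1)$ by the hypothesis $\|Q\|\gg 1$ (which guarantees $Q$ is bounded away from an integer and hence the window contains no integer at all, or at most finitely many), so the total contribution of these terms is $\ll TQ^{-1/2}\cdot Q^{-1/2}=TQ^{-1}$. For the remaining terms the minimum is $Q^{-2}\lvert\log(n_{2}^{b}n_{3}^{c}Q^{-1})\rvert^{-1}$, and I would dyadically decompose according to $\lvert\log(mQ^{-1})\rvert\asymp 2^{-j}$; for each dyadic block the number of admissible $m$ is $\ll Q\cdot 2^{-j}$, each contributing $\ll TQ^{-1/2}\cdot Q^{-1/2}\cdot Q^{-2}2^{j}$, so the block contributes $\ll TQ^{-2}$, and summing over the $O(\log Q)$ relevant values of $j$ gives $\ll TQ^{-2}\log Q$, which is dominated by $TQ^{-1}\log Q$. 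Collecting, $R_{1}(T,Q)\ll TQ^{-1}\log Q$.

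For $R_{1,1}(T,Q)$ the same strategy applies with one extra variable: here $Q/2<n_{1}n_{2}^{b}n_{3}^{c}<2Q$, the weight satisfies $n_{1}^{-1}n_{2}^{-(1+b)/2}n_{3}^{-(1+c)/2}\ll (n_{1}n_{2}^{b}n_{3}^{c})^{-1/2}n_{1}^{-1/2}n_{2}^{-1/2}n_{3}^{-1/2}\ll Q^{-1/2}n_{1}^{-1/2}n_{2}^{-1/2}n_{3}^{-1/2}$, and writing $m=n_{1}n_{2}^{b}n_{3}^{c}$ one has at most $d_{3}(m)\ll m^{\varepsilon}$ representations — more efficiently, summing $n_{1}^{-1/2}n_{2}^{-1/2}n_{3}^{-1/2}$ over all $(n_{1},n_{2},n_{3})$ with product of that shape equal to a fixed $m\asymp Q$ and then over $m$ in a window of length $L$ gives $\ll Q^{-1/2}L^{1/2}(\log Q)$ by Cauchy–Schwarz against a divisor-type bound, or simply $\ll (L/Q)\cdot(\log Q)^{?}$ after noting $\sum_{m\asymp Q}m^{-1}\cdot(\text{number of }m\le L\text{-window})$. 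The cleanest route is: the $m=1$-window (where the $\min$ is $1$) has $O(1)$ integers by $\|Q\|\gg 1$, contributing $\ll TQ^{-1}\log Q$ after summing the weights with the extra logarithmic loss from the $n_{1}$-divisor; the dyadic blocks $\lvert\log(mQ^{-1})\rvert\asymp 2^{-j}$ each have $\ll Q2^{-j}$ integers $m$, contribute $\ll TQ^{-2}2^{j}\cdot Q2^{-j}\cdot(\text{weight sum}\ll Q^{-1}\log Q)=TQ^{-2}\log Q$ each, and summing over $j\ll\log Q$ yields $\ll TQ^{-2}(\log Q)^{2}$. Hence $R_{1,1}(T,Q)\ll TQ^{-1}(\log Q)^{2}$, the extra factor of $\log Q$ relative to $R_{1}$ coming precisely from the presence of the additional divisor variable $n_{1}$ carrying the harmonic sum $\sum_{n_{1}\le Q}n_{1}^{-1}\ll\log Q$.

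The main obstacle is the careful bookkeeping of the contribution of the terms with $n_{2}^{b}n_{3}^{c}$ (resp. $n_{1}n_{2}^{b}n_{3}^{c}$) extremely close to $Q$, i.e. the regime where $\min(1,\cdot)=1$: here one must use the hypothesis $\|Q\|\gg 1$ to rule out — or at least tightly control — integers in an interval of length $O(Q^{-1})$ around $Q$, since without this normalization a single near-coincidence could make the logarithm as small as $O(Q^{-M})$ for arbitrarily large $M$ and the bound would fail. Everything else is a routine dyadic decomposition in the size of $\lvert\log(\cdot/Q)\rvert$ combined with the trivial count that an interval of multiplicative length $1+2^{-j}$ around $Q$ contains $\ll Q2^{-j}$ integers, together with the elementary estimate $n_{2}^{-(1+b)/2}n_{3}^{-(1+c)/2}\ll (n_{2}^{b}n_{3}^{c})^{-1/2}$ valid since $b,c\ge 1$, and its three-variable analogue.
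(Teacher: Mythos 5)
Your proposal rests on the same key hypothesis ($\|Q\|\gg 1$) as the paper, and the final bounds you obtain coincide with the lemma, but the route is substantially more elaborate than the paper's and the bookkeeping has genuine gaps. The paper's own proof is a three-line observation: from $\|Q\|\gg 1$ one has $\lvert\log(mQ^{-1})\rvert\asymp\lvert m-Q\rvert/Q\gg Q^{-1}$ for \emph{every} integer $m$, so the factor $\min\big(1,Q^{-2}\lvert\log(\cdot)\rvert^{-1}\big)$ is $\ll Q^{-1}$ \emph{uniformly} — there is no need for a separate $\min=1$ case (it never occurs) nor for any dyadic decomposition. Inserting $\min\ll Q^{-1}$ reduces $R_1(T,Q)$ to $TQ^{-1}\sum_{Q/2<n_2^bn_3^c<2Q}n_2^{-(1+b)/2}n_3^{-(1+c)/2}$; one then writes $n_2^{-(1+b)/2}n_3^{-(1+c)/2}\asymp Q^{-1/2}n_2^{-1/2}n_3^{-1/2}$ and sums $n_3$, then $n_2$, to get $\ll TQ^{-1}\log Q$. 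For $R_{1,1}$ the same uniform bound is used, the extra $n_1$-sum over the dyadic interval $n_1\asymp Q/(n_2^bn_3^c)$ contributes $O(1)$, and the remaining sum over $n_2,n_3$ is $\ll(\log Q)^2$.

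The dyadic version you propose would work in principle, but your per-$m$ weight estimates are not correct as written, and the final exponent only comes out right by accident of compensating errors. The claim that the weight attached to each integer $m$ in a block is $\ll Q^{-1}$ fails as soon as $\max(b,c)>1$: for example when $b=c=2$ the weight carried by $m=(n_2n_3)^2\asymp Q$ is $\asymp d(\sqrt{m})\,Q^{-3/4}$, which is much larger than $Q^{-1}$. Your count $\ll Q2^{-j}$ of integers $m$ in the block similarly over-counts when $b,c>1$, since only $m$ of the special shape $n_2^bn_3^c$ occur (for $b=c=2$ there are only $\asymp\sqrt{Q}\,2^{-j}$ of them). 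These two inaccuracies happen to cancel, but the argument as written is not rigorous; your own hedging with ``$(\log Q)^{?}$'' in the $R_{1,1}$ discussion reflects this. If you want to pursue the dyadic route, the correct object to bound for each $j$ is the double sum $\sum_{(n_2,n_3):\,|n_2^bn_3^c-Q|\asymp Q2^{-j}}n_2^{-(1+b)/2}n_3^{-(1+c)/2}$ directly, rather than passing through a per-$m$ weight; but it is cleaner simply to observe the uniform bound $\min\ll Q^{-1}$ and sidestep the decomposition altogether, as the paper does.
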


\begin{proof}
We shall focus first on the perusal of $R_{1}(T,Q)$. In view of the assumption on $Q$ it follows that
$$\lvert \log\big(n_{2}^{b}n_{3}^{c}Q^{-1}\big)\rvert\asymp \frac{\lvert n_{2}^{b}n_{3}^{c}-Q\rvert}{Q}\gg Q^{-1}.$$
Inserting the above line in (\ref{pries}) enables one to derive
$$R_{1}(T,Q)\ll TQ^{-1}\sum_{\substack{Q/2<n_{2}^{b}n_{3}^{c}<2Q}} n_{2}^{-(1+b)/2}n_{3}^{-(1+c)/2}\ll TQ^{-3/2}\sum_{\substack{Q/2<n_{2}^{b}n_{3}^{c}<2Q}} n_{2}^{-1/2}n_{3}^{-1/2}.$$ 
We sum over $n_{3}$ in the above equation to the end of obtaining
$$R_{1}(T,Q)\ll TQ^{-3/2+1/2c}\sum_{\substack{n_{2}^{b}<2Q}} n_{2}^{-1/2-b/2c}\ll TQ^{-1}(\log Q),$$ from where the required estimate stated above follows. We shall proceed in a similar fashion for bounding $R_{1,1}(Q,T)$ and begin by noting as was earlier done that $$\lvert \log\big(n_{1}n_{2}^{b}n_{3}^{c}Q^{-1}\big)\rvert\asymp \frac{\lvert n_{1}n_{2}^{b}n_{3}^{c}-Q\rvert}{Q}\gg Q^{-1}.$$ Inserting the above equation in (\ref{priest}) allows one to obtain
\begin{align*}R_{1,1}(Q,T)&\ll TQ^{-1}\sum_{\substack{Q/2<n_{1}n_{2}^{b}n_{3}^{c}<2Q}} n_{1}^{-1}n_{2}^{-(1+b)/2}n_{3}^{-(1+c)/2}
\\
&\ll TQ^{-1}\sum_{\substack{n_{2}^{b}n_{3}^{c}<2Q}} n_{2}^{-(1+b)/2}n_{3}^{-(1+c)/2}\ll TQ^{-1}(\log Q)^{2},
\end{align*} as desired.
\end{proof}

In order to make progress it seems pertinent to employ the technical lemmata presented above to the end of computing the term $J_{4}^{\theta}(T)$ first, it being defined in (\ref{0.1212}). For such purposes we recall (\ref{sthe}), consider for pairs $(r,s)\in\mathbb{N}^{2}$ the constant \begin{equation}\label{Brs}D_{r,s}=\frac{2\zeta(s/2r+1/2)}{\lambda_{\theta}r+1},\end{equation} and for $r\in\mathbb{N}$ we introduce
\begin{equation}\label{F0r}F_{0,r}=\frac{4\gamma}{\lambda_{\theta}r+1}-\frac{4}{(\lambda_{\theta}r+1)^{2}},\ \ \ \ \ \ F_{1,r}=\frac{2}{r(\lambda_{\theta}r+1)},\end{equation} wherein $\gamma$ denotes the Euler's constant.
\begin{lem}\label{lemJ4}
Let $a=1$ and $b,c\in\mathbb{N}$ satisfying $b>c\geq 1$. Then assuming (\ref{fracc}) one has
$$J_{4}^{\theta}(T)=D_{c,b}TQ^{-1/2+1/2c}+D_{b,c}TQ^{-1/2+1/2b}+O\big(TQ^{-1/2}(\log T)^{2}\big).$$
If on the contrary $b=c$ then $$J_{4}^{\theta}(T)=TQ^{-1/2+1/2b}(F_{1,b}(\log Q)+F_{0,b})+O\big(TQ^{-1/2}(\log T)^{2}\big).$$
\end{lem}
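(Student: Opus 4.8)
The plan is to collapse $J_4^\theta(T)$ to a one-dimensional Dirichlet sum and then evaluate it via Perron's formula, the main terms emerging as residues at the poles of the function $G_{b,c}$ of (\ref{rris}).

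Since $a=1$, the diagonal equation $n_1^a=n_2^bn_3^c$ is parametrised by $n_2=r_2$, $n_3=r_3$, $n_1=r_2^br_3^c$. For a diagonal triple, arguing as in the derivation of (\ref{assumth}) — using $b\ge2$ and the standing range of $\theta$, so that $(r_2^br_3^c)^{1/\theta}$ dominates both $r_2^2/b$ and $r_3^2/c$ — one obtains $N_{\bfn,\theta}=2\pi(r_2^br_3^c)^{1/\theta}$, and the condition $\bfn\in\mathcal B_{1,b,c,\theta}$ reduces to $r_2^br_3^c\le Q$. Writing $m=r_2^br_3^c$, one has $N_{\bfn,\theta}P_{\bfn}^{-1/2}=2\pi m^{1/\theta}r_2^{-(1+b)/2}r_3^{-(1+c)/2}$, so on setting $h(m)=\sum_{r_2^br_3^c=m}r_2^{-(1+b)/2}r_3^{-(1+c)/2}$, which satisfies $\sum_{m\ge1}h(m)m^{-s}=G_{b,c}(s)$ by (\ref{rris}),
$$J_4^\theta(T)=2\pi\sum_{m\le Q}m^{1/\theta}h(m).$$

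As $\sum_m m^{1/\theta}h(m)m^{-s}=G_{b,c}(s-1/\theta)$, whose abscissa of convergence is $1/\theta+1/(2c)-1/2$, I would invoke the truncated Perron formula at height $Q^2$ along a line $\text{Re}(s)=\kappa$ slightly to the right of this abscissa, and then shift the contour to $\text{Re}(s)=1/\theta-1/2$; equivalently, after the substitution $s=u+1/\theta$, move $\text{Re}(u)$ from $\kappa-1/\theta$ down to $-1/2$. Since $\theta<1$ gives $1/\theta>1$, the pole of $s^{-1}$ at $s=0$ (that is, $u=-1/\theta$) lies to the left of the new contour and is not crossed; the poles that are crossed are those of $G_{b,c}(u)$, namely $u=1/(2c)-1/2$ and $u=1/(2b)-1/2$, both exceeding $-1/2$ as $b\ge2$, and coalescing when $b=c$. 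For $b>c$ both are simple: using $\mathrm{Res}_{u=1/(2c)-1/2}\,G_{b,c}(u)=c^{-1}\zeta\big(b/(2c)+1/2\big)$, the identity $1/(2c)-1/2+1/\theta=(\lambda_\theta c+1)/(2c)$, and $2\pi Q^{1/\theta}=T$, the pole at $u=1/(2c)-1/2$ contributes $D_{c,b}TQ^{-1/2+1/(2c)}$ and that at $u=1/(2b)-1/2$ contributes $D_{b,c}TQ^{-1/2+1/(2b)}$, with $D_{r,s}$ as in (\ref{Brs}). When $b=c$ the double pole is evaluated by expanding $\zeta\big(bu+(1+b)/2\big)^2=b^{-2}(u-u_0)^{-2}+2\gamma b^{-1}(u-u_0)^{-1}+O(1)$ about $u_0=1/(2b)-1/2$, against $Q^u=Q^{u_0}\big(1+(u-u_0)\log Q+\cdots\big)$ and $(u+1/\theta)^{-1}=\tfrac{2b}{\lambda_\theta b+1}-\tfrac{4b^2}{(\lambda_\theta b+1)^2}(u-u_0)+\cdots$; extracting the residue of the product and multiplying by $2\pi Q^{1/\theta}=T$ produces exactly $TQ^{-1/2+1/(2b)}\big(F_{1,b}(\log Q)+F_{0,b}\big)$, with $F_{0,b},F_{1,b}$ as in (\ref{F0r}).

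It then remains to estimate three error contributions. The Perron error is $\ll\sum_m m^{1/\theta}h(m)(Q/m)^\kappa\min\big(1,Q^{-2}|\log(Q/m)|^{-1}\big)$; its portion over $m\in(Q/2,2Q)$ equals, up to a bounded constant, $Q^{1/\theta}\sum_{Q/2<m<2Q}h(m)\min\big(1,Q^{-2}|\log(Q/m)|^{-1}\big)=(2\pi)^{-1}R_1(T,Q)$ since $Q^{1/\theta}=T/2\pi$, hence is $\ll TQ^{-1}\log Q$ by Lemma \ref{lem6.2} (whose hypothesis $\|Q\|\gg1$ is precisely (\ref{fracc})), while its portion over $m\notin(Q/2,2Q)$ has $|\log(Q/m)|\gg1$ and is $\ll TQ^{-2+\varepsilon}$ by a routine tail estimate using $\kappa>1/\theta+1/(2c)-1/2$. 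The integral along the new vertical line equals $Q^{1/\theta}(2\pi i)^{-1}\int_{-1/2-iQ^2}^{-1/2+iQ^2}G_{b,c}(u)Q^u(u+1/\theta)^{-1}\,du$, which is $L_{b,c}(Q)$ with $u^{-1}$ replaced by $(u+1/\theta)^{-1}$ — harmless, since $|u+1/\theta|\gg1+|u|$ there — whence it is $\ll Q^{1/\theta}Q^{-1/2}(\log Q)^2\ll TQ^{-1/2}(\log T)^2$ by Lemma \ref{lem6.1}. Finally, the two horizontal segments at $\text{Im}(s)=\pm Q^2$ are bounded, through the horizontal estimates of Lemma \ref{lem6.1} with $\delta=\kappa-1/\theta$, by $\ll Q^{1/\theta}Q^{\kappa-1/\theta-4/3}\ll TQ^{-1/2}$. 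Collecting the residues with these estimates yields both displayed formulae.

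The step I expect to be the main obstacle is the bookkeeping of the Perron error in the critical range $m\asymp Q$: this is precisely the sum that Lemma \ref{lem6.2} addresses once (\ref{fracc}) makes available $|\log(Q/m)|\asymp|m-Q|/Q\gg Q^{-1}$, so the genuine difficulty has been front-loaded into the preliminary lemmata; a secondary point demanding care is the residue extraction at the coalescing poles when $b=c$, where the constants $F_{0,b},F_{1,b}$ must come out exactly.
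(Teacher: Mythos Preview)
Your proposal is correct and follows essentially the same route as the paper's proof: reduce $J_4^\theta(T)$ via the diagonal parametrisation and (\ref{assumth}) to a one-variable Dirichlet sum, apply the truncated Perron formula at height $Q^2$, shift to $\text{Re}(s)=1/\theta-1/2$, extract the residues (two simple poles for $b>c$, a double pole for $b=c$), and bound the Perron, vertical and horizontal errors through Lemmata~\ref{lem6.1} and~\ref{lem6.2}. The only cosmetic difference is that you make the substitution $s=u+1/\theta$ explicit so as to appeal directly to $L_{b,c}(Q)$ and the horizontal estimates of Lemma~\ref{lem6.1} with the innocuous replacement $u^{-1}\mapsto(u+1/\theta)^{-1}$, whereas the paper keeps the variable $s$, works with the shifted function $H_{b,c}(s)=G_{b,c}(s-1/\theta)$, places the initial Perron line at $\text{Re}(s)=1/\theta+(\log Q)^{-1}$, and then invokes ``an identical argument to the one utilised in the proof of Lemma~\ref{lem6.1}'' rather than the lemma itself.
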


\begin{proof}

We define for convenience the meromorphic function 
$$H_{b,c}(s)=\zeta\big(bs+(1+b)/2-b/\theta\big)\zeta\big(cs+(1+c)/2-c/\theta\big)$$ and take $\delta_{Q}=(\log Q)^{-1}$. We then recall (\ref{QQQQ}) and equation (\ref{sthe}) and observe that an application of Perron's formula (see \cite[Theorem 5.2]{MonVau}) and the same argument as in (\ref{assumth}) in conjunction with (\ref{lades}) yields
\begin{align*}J_{4}^{\theta}(T)=&2\pi \sum_{r_{2}^{b}r_{3}^{c}\leq Q}r_{2}^{b/\theta-(b+1)/2}r_{3}^{c/\theta-(c+1)/2}=i^{-1}\int_{1/\theta+\delta_{Q}-iQ^{2}}^{1/\theta+\delta_{Q}+iQ^{2}}H_{b,c}(s)Q^{s}\frac{ds}{s}+E(T,Q),
\end{align*}
wherein $E(T,Q)$ satisfies the estimate
$$E(T,Q)\ll R_{1}(T,Q)+TQ^{-2},$$ the term $R_{1}(T,Q)$ having been previously defined in (\ref{pries}). It seems worth momentarily pausing our discussion and remark that the error term in equation (5.8) of Montgomery and Vaughan \cite[Theorem 5.2]{MonVau} is $O(TQ^{-2})$ in the present context. We focus first on the instance $c<b$, recall (\ref{sthe}) and shift as is customary the line of integration to $\text{Re}(s)=\lambda_{\theta}/2$ in the above integral to obtain
\begin{align*}J_{4}^{\theta}(T)=&D_{c,b}TQ^{-1/2+1/2c}+D_{b,c}TQ^{-1/2+1/2b}+E(T,Q)+2\pi E_{1}(T,Q)
\\
&+2\pi\sum_{m=1}^{2}\xi_{m}E_{2,m}(T,Q),
\end{align*}
wherein we wrote $$E_{1}(T,Q)=\frac{1}{2\pi i}\int_{\lambda_{\theta}/2-iQ^{2}}^{\lambda_{\theta}/2+iQ^{2}}H_{b,c}(s)Q^{s}\frac{ds}{s},\ \ \ \ \ \  E_{2,m}(T,Q)=\frac{1}{2\pi i}\int_{\lambda_{\theta}/2+\xi_{m}iQ^{2}}^{1/\theta+\delta_{Q}+\xi_{m}iQ^{2}}H_{b,c}(s)Q^{s}\frac{ds}{s},$$ and $\xi_{m}=(-1)^{m}.$ 
We observe that an identical argument to the one utilised in the proof of Lemma \ref{lem6.1} enables one to estimate the above integrals in a completely analogous manner, thereby obtaining
$$E_{1}(T,Q)\ll TQ^{-1/2}(\log Q)^{2},\ \ \ \ \ \ \ \ \ \ \ \ \ \ E_{2,m}(T,Q)\ll TQ^{-4/3},$$ which combined with the above equations and Lemma \ref{lem6.2} in conjunction with the discussion right above such a lemma yields
\begin{equation*}\label{errow3}J_{4}^{\theta}(T)=D_{c,b}TQ^{-1/2+1/2c}+D_{b,c}TQ^{-1/2+1/2b}+O\big(TQ^{-1/2}(\log Q)^{2}\big).
\end{equation*} For the case $b=c$ most of the argument applies save the computation of the corresponding residues, whence the formula
$$J_{4}^{\theta}(T)=TQ^{-1/2+1/2b}(F_{1,b}(\log Q)+F_{0,b})+O\big(TQ^{-1/2}(\log Q)^{2}\big)$$ holds as well. 
\end{proof}

Before stating the main proposition concerning the asymptotic evaluation of the diagonal contribution in this particular setting, it seems convenient to recall (\ref{Brs}) and (\ref{F0r}), and to define for pairs $(r,s)\in\mathbb{N}^{2}$ the constant
\begin{equation}\label{Ars}A_{r,s}=\frac{2}{r-1}\zeta(s/2r+1/2).\end{equation}
It also seems desirable to further introduce for $r\in\mathbb{N}$ the parameters
\begin{equation}\label{E01}E_{0,r}=\frac{4\gamma}{r-1}+\frac{4}{(r-1)^{2}},\ \ \ \ \ \ \ E_{1,r}=\frac{2}{r(r-1)},\end{equation} and \begin{equation}\label{H12}H_{0,r}=E_{0,r}+F_{0,r},\ \ \ \ \ \ \ \ \ H_{1,r}=E_{1,r}+F_{1,r}.\end{equation} We define for convenience the linear polynomial
\begin{equation}\label{kris}H_{r}(x)=(2\pi)^{\theta(1/2-1/2r)}\big(\theta H_{1,r} x+H_{0,r}-\theta H_{1,r}\log 2\pi\big).\end{equation}
It also seems pertinent to recall (\ref{Cars}), and worth observing that in view of the definition (\ref{Kabc}) then it transpires that
\begin{equation}\label{K1bc}K_{1,b,c}=\zeta\big((1+b)/2\big)\zeta\big((1+c)/2\big).\end{equation}

\begin{prop}\label{prop3k}
Let $a=1$ and $b,c\in\mathbb{N}$ satisfying $1<c<b$, and let $0<\theta<1$. Then assuming, as we may, the proviso (\ref{fracc}) one has
$$I_{1,\theta}(T)=K_{1,b,c}T-C_{1,c,b}TQ^{-1/2+1/2c}-C_{1,b,c}TQ^{-1/2+1/2b}+O\big(TQ^{-1/2}(\log Q)^{2}\big).$$ If on the contrary $b=c>1$ then instead
$$I_{1,\theta}(T)=K_{1,b,b}T-\big(H_{1,b}\log Q+H_{0,b}\big)TQ^{-1/2+1/2b}+O\big(TQ^{-1/2}(\log Q)^{2}\big).$$
\end{prop}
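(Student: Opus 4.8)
\textbf{Proof plan for Proposition \ref{prop3k}.}
The plan is to recall the decomposition (\ref{I131}) and (\ref{J1J9}), which express $I_{1,\theta}(T)=J_{1}^{\theta}(T)+J_{2}^{\theta}(T)$ and $J_{1}^{\theta}(T)=\sigma_{1,b,c}T-J_{3}^{\theta}(T)-J_{4}^{\theta}(T)$, so that everything reduces to handling the three pieces $J_{2}^{\theta}$, $J_{3}^{\theta}$ and $J_{4}^{\theta}$ in the special case $a=1$. The off-diagonal term $J_{2}^{\theta}(T)$ is already controlled by Lemma \ref{prop478}: with $a=1$, the exponent $1+1/2c-1/2a=1/2+1/2c$ gives $J_{2}^{\theta}(T)\ll T^{1/2}Q^{1/2}+T^{1/2+1/2c}$, both of which are absorbed into the claimed error term $O(TQ^{-1/2}(\log Q)^{2})$ (using $Q=(T/2\pi)^{\theta}$ with $\theta<1$, so that $T^{1/2}Q^{1/2}\ll T^{1-\theta/2}\ll TQ^{-1/2}$ provided one is slightly careful about whether this is genuinely smaller — in fact for the unconditional statement here one needs no abc hypothesis since $a=1$ and the relevant diophantine equation $n_1=n_2^bn_3^c$ is trivially parametrised). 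The term $J_{4}^{\theta}(T)$ is supplied directly by Lemma \ref{lemJ4}, which already gives exactly the two secondary terms $D_{c,b}TQ^{-1/2+1/2c}+D_{b,c}TQ^{-1/2+1/2b}$ (respectively the polynomial $F_{1,b}(\log Q)+F_{0,b}$ when $b=c$) up to $O(TQ^{-1/2}(\log T)^2)$.

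The remaining task is the asymptotic evaluation of $J_{3}^{\theta}(T)$. First I would observe, exactly as in the derivation of (\ref{assumth}), that because $\theta<1$ the maximum defining $N_{\bfn,\theta}$ on the diagonal $n_1=n_2^bn_3^c$ is attained at $n_{1,\theta}'^{1/\theta}$, so after the parametrisation $n_1=r_2^br_3^c$, $n_2=r_2$, $n_3=r_3$ (valid since $a=1$ forces $a_2=a_3=1$) one gets
$$J_{3}^{\theta}(T)=T\sum_{r_2^br_3^c>Q}r_2^{-(1+b)/2}r_3^{-(1+c)/2}.$$
Then I would split the complementary range $r_2^br_3^c>Q$ according to whether $r_3^c<Q$ or $r_3^c\geq Q$, as in the proof of Proposition \ref{propAA}. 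In the first regime, summing the tail in $r_2$ (the exponent $-(1+b)/2<-1$ so the tail of the zeta series is $\asymp Q^{(-1/2+1/2b)\cdot}$ times a bounded factor, more precisely $\sum_{r_2>X}r_2^{-(1+b)/2}=\frac{2}{b-1}X^{-(b-1)/2}+O(X^{-(b+1)/2})$ with $X=(Q/r_3^c)^{1/b}$) produces, after summing over $r_3$, a main term $\frac{2}{b-1}\zeta(c/2b+1/2)TQ^{-1/2+1/2b}$; I would match this to $A_{b,c}$ as in (\ref{Ars}). In the second regime $r_3\geq Q^{1/c}$ one bounds trivially $\sum_{r_3\geq Q^{1/c}}r_3^{-(1+c)/2}\cdot(\text{bounded }r_2\text{-sum})\ll TQ^{-1/2+1/2c}$ and, symmetrically, extracting the main term here gives $\frac{2}{c-1}\zeta(b/2c+1/2)TQ^{-1/2+1/2c}=A_{c,b}TQ^{-1/2+1/2c}$, all with error $O(TQ^{-1/2}(\log Q)^2)$. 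Adding $J_{3}^{\theta}$ and $J_{4}^{\theta}$ and recalling $C_{1,r,s}=(A_{r,s}+D_{r,s})(2\pi)^{\theta(1/2-1/2r)}$ after converting the powers of $Q$ back to powers of $T$ via $Q^{-1/2+1/2r}=(T/2\pi)^{-\theta(1/2-1/2r)}$ — cf. (\ref{Cars}), whose constant is precisely $\frac{4a}{\theta(r-a)(\lambda_\theta r+a)}\zeta(\cdot)(2\pi/a)^{\theta(\cdot)}$ specialised at $a=1$, noting $\frac{4}{\theta(r-1)(\lambda_\theta r+1)}=\frac{2}{r-1}+\frac{2}{\lambda_\theta r+1}$ after using $\lambda_\theta=2/\theta-1$ — yields the stated formula; in the case $b=c$ the two contributions merge, the residue computation in Lemma \ref{lemJ4} and the analogue here both produce a $\log Q$, and one reads off the polynomial $H_{1,b}\log Q+H_{0,b}$ via (\ref{H12}).

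The main obstacle is purely bookkeeping: verifying that the constant emerging from the tail-of-zeta expansion of $J_{3}^{\theta}(T)$ combines with $D_{r,s}$ from Lemma \ref{lemJ4} to give exactly $C_{1,r,s}$ as defined in (\ref{Cars}), i.e. checking the algebraic identity $\frac{2}{r-1}+\frac{2}{\lambda_\theta r+1}=\frac{4}{\theta(r-1)(\lambda_\theta r+1)}$ (equivalently $2(\lambda_\theta r+1)+2(r-1)=(r-1)(\lambda_\theta r+1)\cdot\theta\cdot\frac{2}{\theta}$... one must be careful here) together with tracking the $(2\pi)^{\theta(1/2-1/2r)}$ factors that arise when passing from $Q$-powers to $T$-powers, and similarly matching the constant and linear coefficients $H_{0,b},H_{1,b}$ in the diagonal case $b=c$ against (\ref{E01}), (\ref{F0r}), (\ref{H12}). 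There is no analytic difficulty beyond what Lemmata \ref{lem6.1}, \ref{lem6.2}, \ref{lemJ4} and Proposition \ref{propAA} already provide; the role of the hypothesis $1<c<b$ (strict, and $c>1$) is exactly to guarantee that all the zeta-tail exponents $-(1+b)/2$ and $-(1+c)/2$ lie below $-1$ so that the tails converge and produce genuine secondary main terms rather than logarithmic blow-ups, and the assumption $\theta<1$ is what pins the maximum in $N_{\bfn,\theta}$ as used above.
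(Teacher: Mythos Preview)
Your reduction to $J_{2}^{\theta}$, $J_{3}^{\theta}$, $J_{4}^{\theta}$ and your use of Lemmata \ref{prop478} and \ref{lemJ4} for the first and third of these are fine and agree with the paper. The genuine gap is in your treatment of $J_{3}^{\theta}(T)$.

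Your elementary splitting by whether $r_{3}^{c}<Q$ or $r_{3}^{c}\geq Q$ does not produce the two secondary terms with the required error. In the regime $r_{3}^{c}\geq Q$ the $r_{2}$-sum runs over \emph{all} $r_{2}\geq 1$ and equals $\zeta\big((1+b)/2\big)$, not $\zeta(b/2c+1/2)$ as you claim; the coefficient $A_{c,b}$ never appears from this piece alone. Worse, in the regime $r_{3}^{c}<Q$, after expanding the $r_{2}$-tail as $\frac{2}{b-1}X^{(1-b)/2}+O(X^{-(1+b)/2})$ with $X=(Q/r_{3}^{c})^{1/b}$, the resulting $r_{3}$-sum $\sum_{r_{3}<Q^{1/c}}r_{3}^{-1/2-c/(2b)}$ has exponent in $(-1,0)$ (since $c<b$), so it contributes a term of size $Q^{1/(2c)-1/(2b)}$ \emph{in addition to} $\zeta(1/2+c/(2b))$; and the $O$-term, once summed over $r_{3}$, is itself $O(Q^{-1/2+1/(2c)})$, the same order as the secondary term you are trying to isolate. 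The upshot is that the elementary tail argument modelled on Proposition \ref{propAA} cannot cleanly extract the second lower-order term here; the paper says exactly this in the introduction (the passage preceding the statement of Theorem \ref{prop3k4}).

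The paper's route is different: it expresses $J_{3}^{\theta}(T)$ via Perron's formula as a contour integral of $G_{b,c}(s)Q^{s}/s$ (with $G_{b,c}$ as in (\ref{rris})), shifts the line of integration from just left of $\text{Re}(s)=0$ to $\text{Re}(s)=-1/2$, and picks up residues at the two simple poles $s=1/(2b)-1/2$ and $s=1/(2c)-1/2$, which give precisely $A_{b,c}TQ^{-1/2+1/(2b)}$ and $A_{c,b}TQ^{-1/2+1/(2c)}$ with the correct $\zeta$-values. The integral along the shifted line is $L_{b,c}(Q)$ of (\ref{Mbc}), bounded by $Q^{-1/2}(\log Q)^{2}$ via the second-moment estimate for $\zeta(1/2+it)$ (Lemma \ref{lem6.1}), and the Perron error is handled by Lemma \ref{lem6.2} together with the remark before (\ref{fracc}). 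This is the step you are missing: some input of the strength of mean-value bounds for $\zeta$ on the critical line is needed to push the error below both secondary terms, and the contour-shift delivers both residues at once without the cross-contamination your splitting suffers from.
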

\begin{proof}
We begin the proof by recalling (\ref{0.1212}) and observing that an argument akin to that in (\ref{assumth}) combined as is customary with (\ref{n1prima}) and (\ref{lades}) delivers
$$J_{3}^{\theta}(T)=T\sum_{r_{2}^{b}r_{3}^{c}\geq Q}r_{2}^{-(1+b)/2}r_{3}^{-(1+c)/2}.$$
We denote $\delta_{Q}=(\log Q)^{-1}$ and observe that then Perron's formula (see \cite[Theorem 5.2]{MonVau}) yields
$$J_{3}^{\theta}(T)=\frac{T}{2\pi i}\int_{\delta_{Q}-iQ^{2}}^{\delta_{Q}+iQ^{2}}\zeta\Big(\frac{1+b}{2}-bs\Big)\zeta\Big(\frac{1+c}{2}-cs\Big)Q^{-s}\frac{ds}{s}+R(T,Q),$$
wherein the reader may note, but not before recalling (\ref{pries}), that one has the inequality \begin{equation}\label{RXT}R(T,Q)\ll R_{1}(T,Q)+TQ^{-2}\ll TQ^{-1}(\log Q),\end{equation} wherein the last step we applied Lemma \ref{lem6.2} in conjunction with the discussion right above such a lemma. It seems pertinent to remark as above that the error term in equation (5.8) of \cite[Theorem 5.2]{MonVau} is $O(TQ^{-2})$ in the present context. We recall (\ref{rris}) and make the change of variables $z=-s$ for convenience to the end of deriving the analogous formula
$$J_{3}^{\theta}(T)=\frac{-T}{2\pi i}\int_{-\delta_{Q}-iQ^{2}}^{-\delta_{Q}+iQ^{2}}G_{b,c}(s)Q^{s}\frac{ds}{s}+R(T,Q).$$

We recall (\ref{Mbc}) to the reader, define $\xi_{m}=(-1)^{m}$ and shift the contour integral to the line $\text{Re}(s)=-1/2$ to obtain for the case $c<b$ the formula
\begin{align*}J_{3}^{\theta}(T)=&A_{c,b}TQ^{-1/2+1/2c}+A_{b,c}TQ^{-1/2+1/2b}-\frac{TL_{b,c}(Q)}{2\pi i}\nonumber
\\
&-\frac{1}{2\pi i}\sum_{m=1}^{2}\xi_{m}\int_{-1/2+\xi_{m}iQ^{2}}^{-\delta_{Q}+\xi_{m}iQ^{2}}G_{b,c}(s)Q^{s}\frac{ds}{s}+R(T,Q).
\end{align*}
It is apparent that a combination of both Lemma \ref{lem6.1} and equation (\ref{RXT}) suffices to deduce 
\begin{equation}\label{J3for}J_{3}^{\theta}(T)= A_{c,b}TQ^{-1/2+1/2c}+A_{b,c}TQ^{-1/2+1/2b}+O\big(TQ^{-1/2}(\log Q)^{2}\big).\end{equation}

In view of the conditions required to apply Lemmata \ref{lem6.1} and \ref{lem6.2} it transpires that in the analysis for $b=c$ the depart from its counterpart only has its reliance on the computation of the corresponding residues. One thus obtains
\begin{equation}\label{J3for3}J_{3}^{\theta}(T)= (E_{1,b}\log Q+E_{0,b})TQ^{-1/2+1/2b}+O\big(TQ^{-1/2}(\log Q)^{2}\big).\end{equation}
The proposition then follows by combining both (\ref{J3for}) and (\ref{J3for3}) with (\ref{I131}), (\ref{J1J9}) and Lemmata \ref{prop478} and \ref{lemJ4}.
\end{proof}

\section{Bounds for integrals of unimodular functions}\label{simple}
The upcoming discussion will be devoted to the application of Titchmarsh \cite[Lemmata 4.2, 4.4]{Tit} for the purpose of estimating some of the integrals involved in (\ref{piresiti2}) and (\ref{piresiti23}), it being appropiate to define for fixed $0<\theta<1$ and pairs of positive real numbers $r,s>0$ the terms
$$Y_{2,r,s}^{\theta}(T)=\int_{0}^{T}D_{\theta}(1/2+ait)D(1/2+irt)D(1/2-ist)\chi(1/2-irt)dt$$ and
$$Y_{2,r,s,a}^{\theta}(T)=\int_{0}^{T}\lvert D_{\theta}(1/2+ait)\rvert^{2}D(1/2+irt)D(1/2-ist)\chi(1/2-irt)dt.$$
Equipped with these definitions we write \begin{equation}\label{I2rs}I_{2,\theta}(T)=Y_{2,b,c}^{\theta}(T)+Y_{2,c,b}^{\theta}(T)\end{equation} and
\begin{equation*}I_{2,\theta,a}(T)=Y_{2,b,c,a}^{\theta}(T)+Y_{2,c,b,a}^{\theta}(T).\end{equation*}
Likewise, we further introduce the integral
$$I_{3,\theta}(T)=\int_{0}^{T}D_{\theta}(1/2+ait)D(1/2+bit)D(1/2+cit)\chi(1/2-bit)\chi(1/2-cit)dt,$$ and also consider $$I_{3,\theta,a}(T)=\int_{0}^{T}\lvert D_{\theta}(1/2+ait)\rvert^{2}D(1/2+bit)D(1/2+cit)\chi(1/2-bit)\chi(1/2-cit)dt.$$
\begin{lem}\label{lem604}
Let $a,r,s>0$ be real numbers with the property that $\min(r,s)> 2\theta a$ or $\min(r,s)=a$ and $\theta=1/2$. Then 
$$Y_{2,r,s}^{\theta}(T)\ll T^{1/2}Q^{1/2},\ \ \ \ \ \ \ \ Y_{2,r,s,a}^{\theta}(T)\ll T^{1/2}Q.$$ Moreover, whenever $a\leq c\leq b$ one has
$$\max\big(I_{2,\theta}(T),I_{3,\theta}(T)\big)\ll T^{1/2}Q^{1/2},\ \ \ \ \ \ \ \ \ \ I_{3,\theta,a}(T)\ll T^{1/2}Q.$$
\end{lem}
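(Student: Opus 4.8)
The plan is to estimate each of the oscillatory integrals by expanding the Dirichlet polynomials into their constituent terms and applying the standard van der Corput–type bounds for integrals of unimodular functions, namely Titchmarsh \cite[Lemmata 4.2, 4.4]{Tit}. First I would treat $Y_{2,r,s}^{\theta}(T)$. Writing $D_{\theta}(1/2+ait)=\sum_{n_{1}\leq Q}n_{1}^{-1/2-ait}$ and similarly for the two factors $D(1/2\pm \cdot)$ of length at most $(T_{1})^{1/2}$, and invoking Lemma \ref{lem601} to replace $\chi(1/2-irt)$ by $(2\pi/(rt))^{-irt}e^{-irt-i\pi/4}(1+O(1/t))$, the integrand becomes, up to the harmless $O(1/t)$ factor, a sum over triples $(n_{1},n_{2},n_{3})$ of terms $P_{\bfn}^{-1/2}$ times $e^{i\phi_{\bfn}(t)}$ where $\phi_{\bfn}(t)=-art\log(2\pi/(rt))-art+t\log\bigl(n_{1}^{-a}n_{2}^{r}n_{3}^{-s}\bigr)$ (signs depending on the precise placement of conjugates). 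The key point is that $\phi_{\bfn}'(t)=ar\log t + O(1)$ and $\phi_{\bfn}''(t)=ar/t$, so there is no stationary point in a dyadic range where $t\asymp U$ once $U$ is large, and the second-derivative test (Titchmarsh Lemma 4.4, or first-derivative test Lemma 4.2 after the diagonal is isolated) gives a bound of size $O\bigl((U/ (ar))^{1/2}\bigr)$ for each term; summing $P_{\bfn}^{-1/2}$ over $n_{2},n_{3}\leq (T_{1})^{1/2}$ and $n_{1}\leq Q$ and over dyadic ranges $U\leq T$ then yields $O(T^{1/2}Q^{1/2})$, the saving $Q^{1/2}$ rather than $Q$ coming from $\sum_{n_{1}\leq Q}n_{1}^{-1/2}\ll Q^{1/2}$ while $\sum_{n_{2}}n_{2}^{-1/2}$ and $\sum_{n_{3}}n_{3}^{-1/2}$ each contribute at most $O(T^{1/4})$ and the $T^{1/2}$ from the van der Corput bound absorbs these against the length of integration.

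The hypothesis $\min(r,s)>2\theta a$, or $\min(r,s)=a$ with $\theta=1/2$, is exactly what guarantees that the logarithmic phase $t\log(n_{1}^{-a}n_{2}^{r}n_{3}^{-s})$ cannot conspire with the main $ar\log t$ term to produce a genuine stationary point inside $[0,T]$ once the ranges $n_{i}\leq Q$ or $(T_{1})^{1/2}$ are respected: the comparison of exponents $n_{1}^{a}\asymp (t/2\pi)^{a\theta}$ against $n_{2}^{r}\asymp (t/2\pi)^{r/2}$ forces $a\theta<r/2$, i.e. $r>2\theta a$, for the derivative never to vanish, with the borderline case $r=a,\theta=1/2$ handled separately by a direct computation as in Titchmarsh's treatment of the classical mixed moments. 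I would verify this by bounding $\lvert\phi_{\bfn}'(t)\rvert\gg \log t$ uniformly over admissible $\bfn$ and $t$ in each dyadic block, which is where the stated inequalities on $r,s$ enter decisively; this verification is the main obstacle, since one must carefully track how close $\log(n_{1}^{-a}n_{2}^{r}n_{3}^{-s})$ can get to cancelling $-ar\log(2\pi/(rt))$ at the top of the ranges, and show the residual is $\gg 1$.

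For $Y_{2,r,s,a}^{\theta}(T)$ the argument is identical except that $\lvert D_{\theta}(1/2+ait)\rvert^{2}=\sum_{n_{1},n_{1}'\leq Q}(n_{1}n_{1}')^{-1/2}(n_{1}/n_{1}')^{-ait}$ contributes a double sum over $n_{1},n_{1}'$; the extra logarithmic phase $-ait\log(n_{1}/n_{1}')$ again has bounded derivative and does not create a stationary point under the same hypotheses, so the van der Corput bound still applies term by term, and now $\sum_{n_{1},n_{1}'\leq Q}(n_{1}n_{1}')^{-1/2}\ll Q$, which accounts for the weaker bound $T^{1/2}Q$ in place of $T^{1/2}Q^{1/2}$. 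Finally, the assertions about $I_{2,\theta}(T)$, $I_{3,\theta}(T)$ and $I_{3,\theta,a}(T)$ follow immediately: $I_{2,\theta}(T)=Y_{2,b,c}^{\theta}(T)+Y_{2,c,b}^{\theta}(T)$ by \eqref{I2rs}, and since $a\leq c\leq b$ we have $\min(b,c)=c\geq a$, so either $c>2\theta a$ (using $\theta<1$, $\theta<c/2a$ is guaranteed by the standing assumptions on $\theta$ in all the relevant theorems) or the borderline case $c=a$, $\theta=1/2$, and the first part applies to each summand; $I_{3,\theta}(T)$ is treated the same way, now with \emph{two} $\chi$-factors replaced via Lemma \ref{lem601}, producing a phase $\phi_{\bfn}'(t)=(b+c)\log t+O(1)$ which is even larger, so the same bound $O(T^{1/2}Q^{1/2})$ holds a fortiori; and $I_{3,\theta,a}(T)$ inherits the factor $Q$ from the double $n_{1}$-sum exactly as before.
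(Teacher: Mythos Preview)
Your overall strategy---expand the Dirichlet polynomials, replace $\chi(1/2-irt)$ via Lemma~\ref{lem601}, and apply Titchmarsh's oscillatory-integral lemmata---is the paper's strategy, and your identification of where the hypothesis $\min(r,s)>2\theta a$ enters is correct in spirit. But the accounting that produces $T^{1/2}Q^{1/2}$ has a genuine gap.

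You assert $\phi_{\bfn}'(t)=ar\log t+O(1)$; in fact (dropping the spurious factor $a$) one has
\[
F_{2}'(t)=r\log\Big(\frac{rt}{2\pi}\Big)-a\log n_{1}-r\log n_{2}+s\log n_{3}\,(+a\log n_{4}),
\]
and the terms in $n_{i}$ are of size $\asymp\log T$, not $O(1)$. More seriously, the second-derivative test (Lemma~4.4) gives only $\int\ll (T/r)^{1/2}$ per tuple, and summing $\sum_{n_{1}\leq Q}\sum_{n_{2},n_{3}\leq T_{1}^{1/2}}P_{\bfn}^{-1/2}\ll Q^{1/2}T^{1/2}$ then yields $TQ^{1/2}$, a full factor of $T^{1/2}$ too large. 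The sentence about the $T^{1/2}$ ``absorbing'' the $T^{1/4}$ contributions from $n_{2},n_{3}$ does not correspond to any valid cancellation.

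What the paper (via \cite[Lemma~5.1]{Pli} and the argument after (5.3) there) actually exploits is the \emph{first}-derivative test at the \emph{lower endpoint} $N_{\bfn,\theta}$ of the integration range, where the summation constraints $n_{1}\leq (at/2\pi)^{\theta}$, $n_{2}\leq (rt/2\pi)^{1/2}$ bite. Substituting these into $F_{2}'$ yields
\[
F_{2}'(N_{\bfn,\theta})\geq (r-2\theta a)\log\max\big(n_{1,\theta}'^{1/2\theta},n_{2}',n_{3}'\big)+\tfrac{r}{2}\log r-\theta a\log a+s\log n_{3},
\]
which, under the hypothesis $r>2\theta a$ (or the borderline case), is bounded below by a positive constant for all but $O(1)$ tuples. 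Since $F_{2}'$ is monotone increasing, Lemma~4.2 then gives $\int\ll 1$, and now the trivial sum $\sum P_{\bfn}^{-1/2}\ll T^{1/2}Q^{1/2}$ (respectively $\sum(P_{\bfn}n_{4})^{-1/2}\ll T^{1/2}Q$) delivers the stated bounds directly. The finitely many exceptional tuples are handled by Lemma~4.4. The same mechanism works for $I_{3,\theta}$ and $I_{3,\theta,a}$ with the larger coefficient $b+c$ in place of $r$. So the missing idea is that the lower bound on $F_{2}'$ at $t=N_{\bfn,\theta}$, not a generic van der Corput estimate, is what makes each integral $O(1)$.
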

\begin{proof}

It might be worth noting that recalling (\ref{I2rs}) and applying \cite[Lemma 5.1]{Pli} delivers the estimates for $Y_{2,r,s}^{\theta}(T)$, $I_{2,\theta}(T)$ and $I_{3,\theta}(T)$. The starting point for the analysis of $Y_{2,r,s,a}^{\theta}(T)$ shall consist of an application of the approximation formula for $\chi(1/2-rit)$ contained in Lemma \ref{lem601} to obtain
\begin{equation*}\label{prij}Y_{2,r,s,a}^{\theta}(T)=e^{-i\pi/4}\sum_{\substack{n_{4}\leq Q\\ \bfn\in \mathcal{B}_{a,r,s,\theta}}}(P_{\bfn}n_{4})^{-1/2}\int_{N_{\bfn,n_{4},\theta}}^{T}e^{iF_{2}(t)}dt+O(T^{\varepsilon}),\end{equation*} where the function $F_{2}(t)$ is defined by means of the formula
$$F_{2}(t)=rt\log rt-rt(\log 2\pi+1)-t\log (n_{1}^{a}n_{2}^{r}/n_{3}^{s}n_{4}^{a})$$ and the above parameter $N_{\bfn,n_{4},\theta}$ upon recalling (\ref{n1prima}) denotes \begin{equation}\label{N3N4}N_{\bfn,n_{4},\theta}=2\pi\max(n_{1,\theta}'^{1/\theta},n_{2}'^{2},n_{3}'^{2},n_{4,\theta}'^{1/\theta}).\end{equation} In the above line we implicitly employed equation (\ref{priosa}) in conjunction with Holder's inequality to deduce \begin{equation}\label{dirit}\int_{0}^{t}\lvert D_{\theta}(1/2+aiw)^{2}D(1/2+irw)D(1/2-isw)\lvert dw\ll t^{1+\varepsilon},\end{equation} such an observation combined with integration by parts and the aforementioned approximation formula delivering the desired error term. The reader may find it useful to observe that $F_{2}'(t)$ is an increasing function. In view of the fact that $r\geq a$, it then transpires that \begin{align*}F_{2}'(N_{\bfn,\theta})\geq &\big(r-2\theta a\big)\log\big(\max(n_{1,\theta}'^{1/2\theta},n_{2}',n_{3}')\big)+\frac{r}{2}\log r-\theta a\log a+s\log n_{3},\end{align*} whence utilising the same argument after (5.3) of \cite{Pli} enables one to deduce that
 $$Y_{2,r,s,a}^{\theta}(T)\ll T^{1/2}Q.$$

We employ for the analysis of $I_{3,\theta,a}(T)$ the approximation formula in Lemma \ref{lem601} and the argument after (\ref{dirit}), it leading to the error term thereof, to obtain
$$I_{3,\theta,a}(T)=-i\sum_{\bfn\in \mathcal{B}_{a,b,c,\theta}}P_{\bfn}^{-1/2}\int_{N_{\bfn,\theta}}^{T}e^{iF_{3}(t)}dt+O(T^{\varepsilon}),$$where the derivative of the function $F_{3}(t)$ is 
$$F_{3}'(t)=(b+c)\log t+b\log b+c\log c-(b+c)\log2\pi-\log(n_{1}^{a}n_{2}^{b}n_{3}^{c}/n_{4}^{a}),$$ it being desirable to avoid giving account of the definition of $F_{3}(t)$ for the sake of concission. We observe that then $F_{3}'(t)$ is monotonic and that $$F_{3}'(N_{\bfn,\theta})\geq (b+c-2\theta a)\log\big(\max(n_{1,\theta}'^{1/2\theta},n_{2}',n_{3}')\big)+\frac{b}{2}\log b+\frac{c}{2}\log c-\theta a\log a,$$  wherein the reader may find it useful to recall (\ref{n1prima}), whence in a similar fashion as in the discussion pertaining to $I_{3,\theta}(T)$ in \cite[Lemma 5.1]{Pli}, Titchmarsh \cite[Lemmata 4.2, 4.4]{Tit} yields
\begin{equation*}I_{3,\theta,a}(T)\ll T^{1/2}Q.\end{equation*}
\end{proof} 

\emph{Proof of Theorems \ref{thmabcy} and \ref{prop3k4}.} We thus conclude the proof of Theorem \ref{thmabcy}, it being a consequence of Propositions \ref{lem602z} and \ref{propAA} and the remark preceding equation (\ref{fracc}) combined with equation (\ref{piresiti2}) and the above lemma. Theorem \ref{prop3k4} follows instead by utilising Proposition \ref{prop3k} in lieu of Proposition \ref{propAA} and the remark before equation (\ref{fracc}) in conjunction with equation (\ref{piresiti2}) and the above lemma.

\section{The instance $a=c=1$}\label{contour}
The upcoming section shall be devoted to the analysis of $M_{1,b,1}^{\theta}(T)$. It seems worth starting such an endeavour by computing the diagonal contribution in the same vein as above, it being required to such an end to recall (\ref{NNN}) and (\ref{Babc}) and write for each $0<\theta<1$ as in (\ref{I131}) the formula
\begin{equation}\label{I1TTT}I_{1,\theta}(T)=S_{b}^{\theta}(T)T+J_{2}^{\theta}(T)-J_{4}^{\theta}(T),\end{equation} where $$S_{b}^{\theta}(T)=\sum_{\substack{\bfn\in \mathcal{B}_{1,b,1,\theta}\\  n_{1}=n_{2}^{b}n_{3}}}P_{\bfn}^{-1/2}$$ and $J_{2}^{\theta}(T)$ and $J_{4}^{\theta}(T)$ were defined in (\ref{0.121}) and (\ref{0.1212}) respectively. We further observe that the solutions of the underlying equation in both $S_{b}^{\theta}(T)$ and $J_{4}^{\theta}(T)$ can be parametrized by means of the expressions $$n_{1}=m_{3}m_{2}^{b},\ \ \ \ n_{2}=m_{2},\ \ \ \ n_{3}=m_{3},$$ whence it is then apparent that
\begin{align}\label{Sabab}S_{b}^{\theta}(T)=&
\sum_{\substack{m_{2}^{b}m_{3}\leq Q}}m_{3}^{-1}m_{2}^{-(1+b)/2}.
\end{align}
We write for further convenience \begin{equation*}\label{prrao}\gamma_{1}=\frac{1}{2}\lim_{s\to 1}((s-1)\zeta(s))''.\end{equation*} We also recall the constants $C_{1,r,s}$ and $D_{r,s}$ introduced in (\ref{Cars}) and (\ref{Brs}) respectively and define 
\begin{equation*}\label{Ebb}E_{b}=\zeta\Big(\frac{b+1}{2}\Big)\gamma+b\zeta'\Big(\frac{b+1}{2}\Big),\end{equation*}
and the universal parameters
\begin{equation}\label{c1c2c3e} c_{1}=\theta(2\gamma-F_{1,1})-\theta^{2}\log 2\pi ,\ \ \ \ \ c_{0}=\gamma^{2}+2\gamma_{1}-F_{0,1}-\theta(2\gamma-F_{1,1})\log 2\pi +\frac{\theta^{2}}{2}(\log 2\pi)^{2},\end{equation} the constants $F_{0,1}$ and $F_{1,1}$ having been defined in (\ref{F0r}). It further seems convenient to consider the linear polynomial
\begin{equation}\label{Kb}K_{b}(x)=\theta\zeta\Big(\frac{b+1}{2}\Big)x+ E_{b}-D_{b,1}-\theta\zeta\Big(\frac{b+1}{2}\Big)\log 2\pi. \end{equation}
\begin{prop}\label{euse}
Let $a=c=1$ and $b>1$, and let $\theta\leq 1/2.$ Then assuming as we may the condition (\ref{fracc}) one has that
$$I_{1,\theta}(T)=TK_{b}(\log T)-C_{1,b,1}T^{1-\theta(1/2-1/2b)}+O\big(T^{1-\theta/2}(\log T)^{2}\big).$$ If instead $b=1$ then 
$$I_{1,\theta}(T)=\frac{\theta^{2}}{2}T(\log T)^{2}+c_{1}T\log T+c_{0}T+O\big(T^{1-\theta/2}(\log T)^{2}\big).$$

\end{prop}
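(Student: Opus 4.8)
The plan is to compute the diagonal contribution $I_{1,\theta}(T)$ in the setting $a=c=1$ by evaluating the two constituent sums $S_{b}^{\theta}(T)$ and $J_{4}^{\theta}(T)$ asymptotically, then combining with the bound on $J_{2}^{\theta}(T)$ from Lemma \ref{prop478} via (\ref{I1TTT}). The off-diagonal piece $J_{2}^{\theta}(T)$, by Lemma \ref{prop478} with $a=c=1$, is of size $\ll T^{1/2}Q^{1/2}+T^{1+1/2-1/2}=O(T^{1/2}Q^{1/2}+T)$; wait---more carefully, with $a=1$ the contribution $T^{1+1/2c-1/2a}=T$ is not acceptable, so one must use the refined analysis available when $a=1$ (the off-diagonal term is genuinely smaller since the relevant Diophantine equation is linear in $n_{1}$). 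In fact when $a=1$ the equation $n_{2}^{b}n_{3}^{c}-n_{1}=D$ has $n_{1}$ determined, so $J_{2}^{\theta}(T)$ is controlled by $\sum_{\bfn}P_{\bfn}^{-1/2}|\log(n_{2}^{b}n_{3}/n_{1})|^{-1}$, which one bounds routinely by $O(TQ^{-1/2}\log Q)$ after separating the near-diagonal terms; this is within the claimed error $O(T^{1-\theta/2}(\log T)^{2})=O(TQ^{-1/2}(\log T)^{2})$ upon recalling $Q\asymp T^{\theta}$ and $\theta\le 1/2$.

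The core of the argument is the asymptotic evaluation of $S_{b}^{\theta}(T)$ in (\ref{Sabab}) and of $J_{4}^{\theta}(T)$. For $S_{b}^{\theta}(T)=\sum_{m_{2}^{b}m_{3}\le Q}m_{3}^{-1}m_{2}^{-(1+b)/2}$ with $b>1$, I would apply Perron's formula (as in the proof of Proposition \ref{prop3k}, using \cite[Theorem 5.2]{MonVau}) to write this as a contour integral of $\zeta(z+1)\zeta(bz+(1+b)/2)Q^{z}\,dz/z$ over $\mathrm{Re}(z)=\delta_{Q}$, with error $O(R_{1}(T,Q)/T+Q^{-2})$ handled by Lemma \ref{lem6.2}; shifting past the double pole at $z=0$ (a simple pole of $\zeta(z+1)$ meeting the $1/z$ factor) picks up the main term $\zeta((1+b)/2)\log Q + E_{b}$ up to the explicit constant, and past the simple pole of $\zeta(bz+(1+b)/2)$ at $z=-1/(2b)$ picks up the secondary term $A_{b,1}Q^{-1/2+1/2b}$ in the notation of (\ref{Ars}); pushing to $\mathrm{Re}(z)=-1/2$ and bounding via the moment estimates behind Lemma \ref{lem6.1} leaves an error $O(Q^{-1/2}(\log Q)^{2})$. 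For $b=1$ the pole structure changes: $\zeta(z+1)^{2}/z$ has a triple pole at $z=0$, producing the quadratic-in-$\log Q$ main term with Laurent coefficients involving $\gamma$ and $\gamma_{1}$, which is exactly why the $b=1$ case yields a polynomial $K_{b}$ of degree one higher. The term $J_{4}^{\theta}(T)$ has already been computed in Lemma \ref{lemJ4}: for $a=1$, $c=1$, $b>1$ one gets $J_{4}^{\theta}(T)=D_{1,b}TQ^{-1/2+1/2}+D_{b,1}TQ^{-1/2+1/2b}+O(TQ^{-1/2}(\log T)^{2})$; the first summand $D_{1,b}T$ is a genuine contribution to the main term (not an error), coming from the $c=1$ factor, and must be folded into the coefficient of $T$ alongside the contribution from $S_{b}^{\theta}(T)$, while the $D_{b,1}TQ^{-1/2+1/2b}$ piece combines with $A_{b,1}TQ^{-1/2+1/2b}$ from $S_{b}^{\theta}$ to form the secondary term $C_{1,b,1}T^{1-\theta(1/2-1/2b)}$ (recall $Q\asymp(T/2\pi)^{\theta}$, which also explains the $(2\pi)^{\theta(1/2-1/2r)}$ factors in (\ref{Cars}) and (\ref{kris})).

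Putting these pieces into (\ref{I1TTT}): $I_{1,\theta}(T)=S_{b}^{\theta}(T)\,T + J_{2}^{\theta}(T) - J_{4}^{\theta}(T)$, so the coefficient of $T$ is $[\zeta((1+b)/2)\log Q + E_{b}] - D_{1,b}$, which after substituting $\log Q = \theta\log T - \theta\log 2\pi + O(Q^{-1})\cdot\ldots$ (more precisely $\log Q = \log(aT/2\pi)^{\theta}=\theta(\log T-\log 2\pi)$) becomes exactly $TK_{b}(\log T)$ as defined in (\ref{Kb}), with the $-D_{1,b}$ matching the $-D_{b,1}$ in (\ref{Kb})---here one must double-check that $D_{1,b}$ in the notation of (\ref{Brs}) equals the $D_{b,1}$ appearing in $K_{b}(x)$, i.e. track the index conventions carefully. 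The secondary term assembles as stated, and all error terms are $O(T^{1-\theta/2}(\log T)^{2})$. For $b=1$ one similarly collects the triple-pole contribution from $S_{1}^{\theta}(T)$, which gives $\frac{\theta^{2}}{2}T(\log T)^{2}+c_{1}T\log T + c_{0}T$ after substituting for $\log Q$, where the constants $c_{0},c_{1}$ in (\ref{c1c2c3e}) encode the Laurent expansion of $\zeta(z+1)$ to second order together with the $-F_{0,1},-F_{1,1}$ coming from $-J_{4}^{\theta}(T)$ via the $b=1$ case of Lemma \ref{lemJ4}.

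The main obstacle I anticipate is purely bookkeeping: correctly identifying which residues contribute to the main term $T\cdot(\text{polynomial})$ versus the secondary term $T^{1-\theta(1/2-1/2b)}$ versus the error, and reconciling the index conventions (the asymmetry between $D_{r,s}$, $A_{r,s}$, $C_{1,r,s}$ and the roles of $b$ versus $c=1$) so that the constants match (\ref{Kb}), (\ref{c1c2c3e}) and (\ref{Cars}) exactly. There is no genuine analytic difficulty beyond what is already packaged in Lemmata \ref{lem6.1}, \ref{lem6.2} and \ref{lemJ4}; the care required is in the substitution $\log Q\mapsto\theta(\log T-\log 2\pi)$ and in confirming that the $c=1$ ``diagonal'' factor $\zeta$-value does not itself require a pole shift (it does not, since $\zeta(z+1)$ has its pole at $z=0$ which is the same pole already accounted for).
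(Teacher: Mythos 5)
Your proposal takes essentially the same route as the paper: the decomposition (\ref{I1TTT}), Perron's formula for $S_{b}^{\theta}(T)$ with a contour shift to $\mathrm{Re}(s)=-1/2$ using Lemmata \ref{lem6.1}--\ref{lem6.2}, Lemma \ref{lemJ4} for $J_{4}^{\theta}(T)$, and bookkeeping via $\log Q=\theta(\log T-\log 2\pi)$. The one place where the paper differs in detail is $J_{2}^{\theta}(T)$: rather than a fresh direct estimate as you sketch, it simply cites \cite[Lemma 8.2]{Pli} for the bound $O(T^{1/2}Q^{1/2})$, which is then absorbed into the error precisely because $\theta\le 1/2$; your heuristic bound is weaker but lands in the same place, so this is cosmetic.

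Your flagged concern about the index conventions is well taken and appears to identify a typo in the paper. Applying Lemma \ref{lemJ4} with $c=1<b$ gives $J_{4}^{\theta}(T)=D_{1,b}T+D_{b,1}TQ^{-1/2+1/2b}+O(TQ^{-1/2}(\log T)^{2})$, so the constant contribution to the coefficient of $T$ in $I_{1,\theta}(T)=S_{b}^{\theta}(T)T+J_{2}^{\theta}(T)-J_{4}^{\theta}(T)$ is $-D_{1,b}$, where from (\ref{Brs}) one finds $D_{1,b}=\theta\zeta((b+1)/2)$. On the other hand $D_{b,1}=2\zeta(1/2+1/(2b))/(\lambda_{\theta}b+1)$, a different quantity; the $D_{b,1}$ piece is instead the one that combines with $A_{b,1}$ from $S_{b}^{\theta}(T)$ to build the secondary coefficient $C_{1,b,1}$ via $A_{b,1}+D_{b,1}=C_{1,b,1}(2\pi)^{-\theta(1/2-1/(2b))}$. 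Thus (\ref{Kb}) should read $-D_{1,b}=-\theta\zeta((b+1)/2)$ in place of $-D_{b,1}$; this is consistent with the analogous definition (\ref{linepo}), which carries the term $-\theta K_{1,r,s}$ in the corresponding position. Your computation is correct; it is the displayed definition that is off.
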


\begin{proof}
We set $\delta_{Q}=(\log Q)^{-1}$ and utilise the formula in (\ref{Sabab}) for $S_{b}^{\theta}(T)$ in conjunction with Perron's formula as in the discussion in (\ref{RXT}) and an argument similar to that in (\ref{assumth}) combined as is customary with (\ref{n1prima}) and (\ref{lades}) to obtain
\begin{align*}S_{b}^{\theta}(T)&=\sum_{\substack{m_{2}^{b}m_{3}\leq Q}}m_{3}^{-1}m_{2}^{-(1+b)/2}=\frac{1}{2\pi i}\int_{\delta_{Q}-iQ^{2}}^{\delta_{Q}+iQ^{2}}G_{b,1}(s)Q^{s}\frac{ds}{s}+R_{S}(Q),
\end{align*}
the function $G_{b,1}(s)$ having been defined in (\ref{rris}), wherein the reader may recall from (\ref{pries}) that a customary application of Lemma \ref{lem6.2} yields $$R_{S}(Q)\ll Q^{-1}(\log Q).$$
We recall (\ref{Ars}) and shift the contour integral to the line $\text{Re}(s)=-1/2$ to obtain for the case $b>1$ the formula
\begin{align*}\label{linde}S_{b}^{\theta}(T)=&\zeta\big((1+b)/2\big)\log Q+E_{b}-A_{b,1}Q^{-1/2+1/2b}+\frac{1}{2\pi i}\int_{-1/2-iQ^{2}}^{-1/2+iQ^{2}}G_{b,1}(s)Q^{s}\frac{ds}{s}
\\
&+\frac{1}{2\pi i}\sum_{m=1}^{2}\xi_{m}\int_{-1/2+\xi_{m}iQ^{2}}^{\delta_{Q}+\xi_{m}iQ^{2}}G_{b,1}(s)Q^{s}\frac{ds}{s}+R_{S}(Q),
\end{align*}
wherein we recall that $\xi_{m}=(-1)^{m}.$ Therefore, an application of Lemma \ref{lem6.1} in conjunction with the above equations delivers
\begin{equation}\label{J3forih}S_{b}^{\theta}(T)=\zeta\big((1+b)/2\big)\log Q+E_{b}-A_{b,1}Q^{-1/2+1/2b}+O\big(Q^{-1/2}(\log Q)^{2}\big).\end{equation}

In view of the conditions required to apply Lemmata \ref{lem6.1} and \ref{lem6.2} it transpires that in the analysis for $b=1$ the only difference from that of $b>1$ has its reliance on the computation of the corresponding residues. One thus obtains
\begin{equation}\label{J3for34}S_{1}^{\theta}(T)=\frac{1}{2}(\log Q)^{2}+2\gamma \log Q+(\gamma^{2}+2\gamma_{1})+O\big(Q^{-1/2}(\log Q)^{2}\big).\end{equation}
The proposition then follows by combining equation (\ref{I1TTT}) and (\ref{Sabab}) and Lemma \ref{lemJ4} and \cite[Lemma 8.2]{Pli} with both (\ref{J3forih}) and (\ref{J3for34}) and the assumption $\theta\leq 1/2$, the error term $O(T^{1/2}Q^{1/2})$ stemming from the application of \cite[Lemma 8.2]{Pli} being inferior to its counterpart in the present discussion.
\end{proof}

\emph{Proof of Theorem \ref{ppop}.} We conclude the proof of Theorem \ref{ppop} by simply observing that an application of the above proposition and the remark before equation (\ref{fracc}) combined with Lemma \ref{lem604} to equation (\ref{piresiti2}) then delivers the desired result.

\section{Twisted mixed moments of the Riemann-zeta function}\label{twist}
The purpose of the present section is to utilise some of the technical results in Sections \ref{case1} and \ref{simple} to the end of asymptotically evaluate $M_{1,1,b,c}^{\theta}(T)$. As a prelude to our discussion we recall (\ref{QQQQ}), (\ref{piresiti23}) and (\ref{N3N4}) and denote
\begin{align*}\label{Is1}I_{1,\theta,1}(T)&
=\int_{0}^{T}\lvert D_{\theta}(1/2+it)\rvert^{2}D(1/2-bit)D(1/2-cit)dt
\\
&=\sum_{\substack{\bfn\in \mathcal{B}_{1,b,c,\theta}\\ n_{4}\leq Q}}(n_{4}P_{\bfn})^{-1/2}\int_{N_{\bfn,n_{4},\theta}}^{T}\Big(\frac{n_{2}^{b}n_{3}^{c}n_{4}}{n_{1}}\Big)^{it}dt,\end{align*}
 and write in a similar vein as in (\ref{I1TTT}) the formula
\begin{equation}\label{I1TTTrs}I_{1,\theta,1}(T)=S_{b,c}^{\theta}(T)T+J_{2,1}^{\theta}(T)-J_{4,1}^{\theta}(T),\end{equation} wherein $$S_{b,c}^{\theta}(T)=\sum_{\substack{\bfn\in \mathcal{B}_{1,b,c,\theta}\\  n_{1}=n_{2}^{b}n_{3}^{c}n_{4}\\ n_{4}\leq Q}}(n_{4}P_{\bfn})^{-1/2},\ \ \ \ \ \ J_{4,1}^{\theta}(T)= \sum_{\substack{\bfn\in \mathcal{B}_{1,b,c,\theta}\\  n_{1}=n_{2}^{b}n_{3}^{c}n_{4}\\ n_{4}\leq Q}}N_{\bfn,n_{4},\theta}(n_{4}P_{\bfn})^{-1/2},$$ and 
$$J_{2,1}^{\theta}(T)=\sum_{\substack{\bfn\in \mathcal{B}_{1,b,c,\theta}\\ n_{1}\neq n_{2}^{b}n_{3}n_{4}}}P_{\bfn}^{-1/2}\int_{N_{\bfn,n_{4},\theta}}^{T}\Big(\frac{n_{2}^{b}n_{3}^{c}n_{4}}{n_{1}}\Big)^{it}dt.$$ 

The following lemma shall be devoted to analyse first $J_{2,1}^{\theta}(T)$.

\begin{lem}\label{lemi12}
Let $b,c\in\mathbb{N}$. Then upon recalling (\ref{QQQQ}) to the reader and for $0<\theta<1$ one has that
$$J_{2,1}^{\theta}(T)\ll T^{1/2}Q.$$
\end{lem}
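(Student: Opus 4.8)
The plan is to recover, in this twisted setting, the treatment of the off-diagonal contribution underlying the proof of Lemma \ref{prop478} (see also \cite[Lemma 3.2]{Pli}), the sole novelty being the additional variable $n_4$, whose presence will turn out to be harmless once one notices that it cancels out in the only range that matters. Writing $m=n_2^bn_3^cn_4$ and recalling the weight $(n_4P_{\bfn})^{-1/2}=(n_1n_2n_3n_4)^{-1/2}$ borne by each summand (as in the definition of $I_{1,\theta,1}(T)$), I would begin from the elementary estimate
$$\Big\lvert\int_{N_{\bfn,n_4,\theta}}^{T}(m/n_1)^{it}\,dt\Big\rvert\le\frac{2}{\lvert\log(m/n_1)\rvert},$$
valid because $m\ne n_1$, thereby reducing matters to bounding $\sum(n_1n_2n_3n_4)^{-1/2}\lvert\log(m/n_1)\rvert^{-1}$ over $\bfn\in\mathcal B_{1,b,c,\theta}$ with $n_4\le Q$ and $m\ne n_1$. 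I would then split this sum according to whether $m$ lies in the interval $(n_1/2,2n_1)$ or not.

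Off that interval one has $\lvert\log(m/n_1)\rvert\gg 1$, so on recalling (\ref{QQQQ}) and (\ref{Babc}) the contribution of this far range is at most
$$\ll\Big(\sum_{n_1\le Q}n_1^{-1/2}\Big)\Big(\sum_{n_4\le Q}n_4^{-1/2}\Big)\Big(\sum_{n_2\le\sqrt{bT_1}}n_2^{-1/2}\Big)\Big(\sum_{n_3\le\sqrt{cT_1}}n_3^{-1/2}\Big)\ll Q^{1/2}\cdot Q^{1/2}\cdot T_1^{1/4}\cdot T_1^{1/4}\ll T^{1/2}Q,$$
which is exactly the size claimed. In the near range $n_1/2<m<2n_1$ one has $\lvert\log(m/n_1)\rvert\asymp\lvert m-n_1\rvert/n_1$ together with $n_1\asymp m=n_2^bn_3^cn_4$, whence $n_1^{1/2}(n_2n_3n_4)^{-1/2}\asymp n_2^{(b-1)/2}n_3^{(c-1)/2}$, so that $n_4$ disappears entirely from the weight. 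Since moreover $m\le 2Q$ forces $n_2\le(2Q)^{1/b}$, $n_3\le(2Q)^{1/c}$ and at most $\ll Q(n_2^bn_3^c)^{-1}$ admissible values of $n_4$ for each fixed $(n_2,n_3)$, summing over $n_1$ first (which contributes $\sum_{n_1\asymp m,\,n_1\ne m}\lvert m-n_1\rvert^{-1}\ll\log Q$ for each fixed $m$) and then over $n_4,n_2,n_3$ bounds this range by
$$\ll Q\log Q\sum_{n_2\ge 1}n_2^{-(b+1)/2}\sum_{n_3\ge 1}n_3^{-(c+1)/2}\ll Q(\log Q)^{3},$$
the $n_2,n_3$ sums being convergent unless $b$ or $c$ equals $1$, in which case they are truncated at length $\ll Q$. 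As $Q\ll T^\theta$ with $\theta<1$, this is $O(T^{1/2}Q)$, and adding the two ranges yields the lemma.

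I expect the only genuinely delicate point to be the near-diagonal range: a crude treatment of the $n_4$-summation there would be too lossy, and one must exploit the relation $n_1\asymp n_2^bn_3^cn_4$ to see that $n_4$ drops out of the summand altogether. The far range, by contrast, is entirely routine, and it is there that the final exponent $T^{1/2}Q$ is produced.
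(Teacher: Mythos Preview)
Your argument is correct and follows essentially the same route as the paper: both bound the oscillatory integral by $\lvert\log(m/n_1)\rvert^{-1}$, split into a far range $\lvert\log(m/n_1)\rvert\gg 1$ handled trivially and a near range $n_1\asymp m$ where one sums over the difference $r=m-n_1$ to extract a logarithm, observes that the resulting weight $n_2^{(b-1)/2}n_3^{(c-1)/2}$ is independent of $n_4$, and then sums over $n_4\ll Q/(n_2^bn_3^c)$ and finally over $n_2,n_3$. Your treatment is slightly more explicit about the $b=1$ or $c=1$ cases, but otherwise the two proofs coincide.
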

\begin{proof}
In view of the above definitions it is apparent that
$$J_{2,1}^{\theta}(T)\ll \sum_{\substack{\bfn\in \mathcal{B}_{1,b,c,\theta}\\ n_{4}\leq Q}}\frac{(n_{4}P_{\bfn})^{-1/2}}{\big\lvert\log\big(n_{2}^{b}n_{3}^{c}n_{4}/n_{1}\big)\big\rvert}.$$
It transpires that the contribution to the above sum of tuples satisfying 
$$\big\lvert\log\big(n_{2}^{b}n_{3}^{c}n_{4}/n_{1}\big)\big\rvert\geq C $$ for any fixed constant $C>0$ is $O(T^{1/2}Q)$, whence by summing in the complementary set of tuples over the difference $r=n_{2}^{b}n_{3}^{c}n_{4}-n_{1}$ one obtains
\begin{align*}J_{2,1}^{\theta}(T)&
\ll T^{1/2}Q+(\log T)\sum_{n_{2}^{b}n_{3}^{c}n_{4}\ll Q}n_{2}^{b/2-1/2}n_{3}^{c/2-1/2}
\\
&\ll T^{1/2}Q+Q(\log T)\sum_{n_{2}^{b}n_{3}^{c}\ll Q}n_{2}^{-(b+1)/2}n_{3}^{-(c+1)/2}\ll T^{1/2}Q,
\end{align*} as desired.
\end{proof}
It seems pertinent before completing the computation pertaining to $I_{1,\theta,1}(T)$ to recall equations (\ref{Cars}), (\ref{Brs}), (\ref{F0r}), (\ref{Ars}), (\ref{E01}), (\ref{H12}) and (\ref{K1bc}), and to introduce for $(r,s)\in\mathbb{N}^{2}$ the constants
\begin{equation}\label{ebc}B_{r,s}=r\zeta'\big((1+r)/2\big)\zeta\big((1+s)/2\big)+s\zeta'\big((1+s)/2\big)\zeta\big((1+r)/2\big)+K_{1,r,s}\gamma,\end{equation}
\begin{equation}\label{abf}A_{r,s}'=\zeta(1/2+1/2r)A_{r,s},\ \ \ \ \ D_{r,s}'=\zeta(1/2+1/2r)D_{r,s},\ \ \ \ \ C_{1,r,s}'=\zeta(1/2+1/2r)C_{1,r,s},\end{equation}
\begin{equation*}E'_{0,r}=\zeta(1/2+1/2r)E_{0,r}+\frac{2\zeta'(1/2+1/2r)}{r(r-1)},\ \ \ \ \ \ \ \ \ \ \ \ \ E'_{1,r}=\zeta(1/2+1/2r)E_{1,r},\end{equation*}
\begin{equation*}F'_{0,r}=\zeta(1/2+1/2r)F_{0,r}+\frac{2\zeta'(1/2+1/2r)}{r(r\lambda_{\theta}+1)},\ \ \ \ \ \ \ \ \ \ \ \ \ F'_{1,r}=\zeta(1/2+1/2r)F_{1,r},\end{equation*}and
\begin{equation*} H'_{0,r}=E'_{0,r}+F'_{0,r},\ \ \ \ \ \ \ \ \ \ \ \ \ H'_{1,r}=E'_{1,r}+F'_{1,r}.\end{equation*}
We shall conclude our sequel of definitions by considering 
\begin{equation}\label{Polyn}
G_{r}(x)=(2\pi)^{\theta(1/2-1/2r)}\big(\theta H'_{1,r} x+H'_{0,r}-\theta H'_{1,r}\log 2\pi\big),
\end{equation}
and by introducing the linear polynomial
\begin{equation}\label{linepo}
K_{r,s}(x)=\theta K_{1,r,s}x+B_{r,s}-\theta K_{1,r,s}-\theta K_{1,r,s}\log 2\pi.\end{equation}

\begin{prop}
Let $b,c\in\mathbb{N}$ satisfying $1<c<b$ and let $\theta\in\mathbb{R}$ be a fixed parameter having the property that $0<\theta\leq 1/2$. Then assuming as we may the condition (\ref{fracc}) one has that
\begin{align*}I_{1,\theta,1}(T)=&TK_{b,c}(\log T)-C_{1,c,b}'T^{1-\theta(1/2-1/2c)}-C_{1,b,c}'T^{1-\theta(1/2-1/2b)}\\
&+O\big(T^{1-\theta/2}(\log T)^{13/4}+T^{1/2+\theta}\big).
\end{align*}
If instead $b=c>1$ then it transpires that
\begin{align*}I_{1,\theta,1}(T)=&TK_{b,b}(\log T)-G_{b}(\log T)T^{1-\theta(1/2-1/2b)}+O\big(T^{1-\theta/2}(\log T)^{13/4}+T^{1/2+\theta}\big).
\end{align*}
\end{prop}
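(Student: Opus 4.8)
The plan is to follow verbatim the strategy of Propositions~\ref{euse} and \ref{prop3k}, the sole novelty being the extra summation variable $n_{4}$ produced by the square modulus, which introduces in the relevant Dirichlet series an additional factor $\zeta(s+1)$ (respectively $\zeta(s+1-1/\theta)$), thereby upgrading the constants $A_{r,s},D_{r,s},C_{1,r,s},E_{j,r},F_{j,r}$ to the primed versions recorded in (\ref{abf})--(\ref{linepo}). Starting from (\ref{I1TTTrs}), Lemma~\ref{lemi12} disposes of $J_{2,1}^{\theta}(T)\ll T^{1/2}Q$, so it remains to evaluate $S_{b,c}^{\theta}(T)T$ and $J_{4,1}^{\theta}(T)$. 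Inserting the parametrization $n_{1}=n_{2}^{b}n_{3}^{c}n_{4}$ gives $(n_{4}P_{\bfn})^{-1/2}=n_{2}^{-(1+b)/2}n_{3}^{-(1+c)/2}n_{4}^{-1}$, and since $\theta\leq 1/2$ one checks exactly as in (\ref{assumth}) that on the diagonal $N_{\bfn,n_{4},\theta}=2\pi(n_{2}^{b}n_{3}^{c}n_{4})^{1/\theta}$ and that $\bfn\in\mathcal{B}_{1,b,c,\theta}$ together with $n_{4}\leq Q$ is equivalent to $n_{2}^{b}n_{3}^{c}n_{4}\leq Q$; hence $S_{b,c}^{\theta}(T)=\sum_{n_{2}^{b}n_{3}^{c}n_{4}\leq Q}n_{2}^{-(1+b)/2}n_{3}^{-(1+c)/2}n_{4}^{-1}$, the underlying Dirichlet series being $\zeta(s+1)\zeta(bs+(1+b)/2)\zeta(cs+(1+c)/2)=G_{1,b,c}(s)$ of (\ref{rris}). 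An application of Perron's formula to $S_{b,c}^{\theta}(T)T$ as in the discussion around (\ref{RXT}), the error being governed by $R_{1,1}(T,Q)$ of Lemma~\ref{lem6.2} under the proviso (\ref{fracc}), yields $S_{b,c}^{\theta}(T)=\frac{1}{2\pi i}\int_{\delta_{Q}-iQ^{2}}^{\delta_{Q}+iQ^{2}}G_{1,b,c}(s)Q^{s}\frac{ds}{s}+O\big(Q^{-1}(\log Q)^{2}\big)$ with $\delta_{Q}=(\log Q)^{-1}$.

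Next I would shift the contour in this integral to $\text{Re}(s)=-1/2$. The integrand has a double pole at $s=0$ (from $1/s$ and the pole of $\zeta(s+1)$), the residue being $K_{1,b,c}\log Q+B_{b,c}$ with $B_{b,c}$ exactly (\ref{ebc}); simple poles at $s=-1/2+1/2b$ and $s=-1/2+1/2c$ coming from the two zeta factors of $G_{b,c}$, with residues $-A_{b,c}'Q^{-1/2+1/2b}$ and $-A_{c,b}'Q^{-1/2+1/2c}$, the surviving factor $\zeta(s+1)$ specialising to $\zeta(1/2+1/2b)$, resp.\ $\zeta(1/2+1/2c)$, which is where the primes of (\ref{abf}) come from. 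The remaining integral along $\text{Re}(s)=-1/2$ equals $\frac{1}{2\pi i}L_{1,b,c}(Q)\ll Q^{-1/2}(\log Q)^{13/4}$ by Lemma~\ref{lem6.1}, and the horizontal truncations at height $\pm Q^{2}$ are $\ll Q^{\delta_{Q}-1}\ll Q^{-1}$ by the same lemma; thus $S_{b,c}^{\theta}(T)T=(K_{1,b,c}\log Q+B_{b,c})T-A_{b,c}'TQ^{-1/2+1/2b}-A_{c,b}'TQ^{-1/2+1/2c}+O\big(TQ^{-1/2}(\log Q)^{13/4}\big)$.

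For $J_{4,1}^{\theta}(T)$ one argues as in Lemma~\ref{lemJ4}: with the parametrization and $N_{\bfn,n_{4},\theta}=2\pi(n_{2}^{b}n_{3}^{c}n_{4})^{1/\theta}$ one has $J_{4,1}^{\theta}(T)=2\pi\sum_{n_{2}^{b}n_{3}^{c}n_{4}\leq Q}(n_{2}^{b}n_{3}^{c}n_{4})^{1/\theta}n_{2}^{-(1+b)/2}n_{3}^{-(1+c)/2}n_{4}^{-1}$, and Perron's formula on the line $\text{Re}(s)=1/\theta+\delta_{Q}$ applied to $H_{1,b,c}(s)=\zeta(s+1-1/\theta)\zeta(bs+(1+b)/2-b/\theta)\zeta(cs+(1+c)/2-c/\theta)$, with error $\ll R_{1,1}(T,Q)+TQ^{-2}$, followed by a shift to $\text{Re}(s)=\lambda_{\theta}/2$ (recall (\ref{sthe})), does the job. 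The pole at $s=1/\theta$ contributes $2\pi\theta K_{1,b,c}Q^{1/\theta}=\theta K_{1,b,c}T$; the poles at $s=\lambda_{\theta}/2+1/2b$ and $s=\lambda_{\theta}/2+1/2c$ contribute $D_{b,c}'TQ^{-1/2+1/2b}$ and $D_{c,b}'TQ^{-1/2+1/2c}$; on $\text{Re}(s)=\lambda_{\theta}/2$ the integrand equals $\zeta(1/2+it)\zeta(1/2+bit)\zeta(1/2+cit)Q^{\lambda_{\theta}/2+it}(\lambda_{\theta}/2+it)^{-1}$ with $Q^{\lambda_{\theta}/2}\ll TQ^{-1/2}$, so Hölder's inequality and the cube moment of \cite{Bet2}, exactly as in the proof of Lemma~\ref{lem6.1}, bound that part by $TQ^{-1/2}(\log Q)^{13/4}$, while the horizontal pieces are negligible by subconvexity. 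Hence $J_{4,1}^{\theta}(T)=\theta K_{1,b,c}T+D_{b,c}'TQ^{-1/2+1/2b}+D_{c,b}'TQ^{-1/2+1/2c}+O\big(TQ^{-1/2}(\log Q)^{13/4}\big)$.

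Assembling via (\ref{I1TTTrs}) and using $\log Q=\theta\log T-\theta\log 2\pi$ together with the identities $A_{r,s}+D_{r,s}=C_{1,r,s}(2\pi)^{-\theta(1/2-1/2r)}$ (immediate from (\ref{Ars}), (\ref{Brs}), (\ref{Cars}) and $\lambda_{\theta}+1=2/\theta$), whence $(A_{r,s}'+D_{r,s}')Q^{-1/2+1/2r}=C_{1,r,s}'T^{-\theta(1/2-1/2r)}$, the $T$- and $T\log T$-terms collapse to $TK_{b,c}(\log T)$ of (\ref{linepo}) --- the $-\theta K_{1,b,c}$ therein being supplied by $-J_{4,1}^{\theta}(T)$ --- and the secondary terms become $-C_{1,b,c}'T^{1-\theta(1/2-1/2b)}-C_{1,c,b}'T^{1-\theta(1/2-1/2c)}$, the error being $O\big(T^{1/2}Q+TQ^{-1/2}(\log Q)^{13/4}\big)=O\big(T^{1/2+\theta}+T^{1-\theta/2}(\log T)^{13/4}\big)$ since $Q\ll T^{\theta}$. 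In the coincident case $b=c$ the two poles at $s=-1/2+1/2b$ (in $S_{b,c}^{\theta}$) and $s=\lambda_{\theta}/2+1/2b$ (in $J_{4,1}^{\theta}$) merge into double poles whose residues are linear in $\log Q$; collecting them produces the primed constants $E_{j,b}',F_{j,b}',H_{j,b}'$ and hence the linear polynomial $G_{b}(\log T)$ of (\ref{Polyn}), the rest of the argument being unchanged. The main obstacle is purely one of bookkeeping: making sure every Perron and contour-shift error is genuinely dominated by $T^{1-\theta/2}(\log T)^{13/4}+T^{1/2+\theta}$ --- in particular that the line integral on $\text{Re}(s)=\lambda_{\theta}/2$ costs only a power $(\log T)^{13/4}$, which forces the use of the bound $\int_{0}^{T}|\zeta(1/2+it)|^{3}dt\ll T(\log T)^{9/4}$ of \cite{Bet2} exactly as in Lemma~\ref{lem6.1} --- and then matching each resulting constant against the definitions (\ref{ebc})--(\ref{linepo}).
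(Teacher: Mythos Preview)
Your proposal is correct and follows essentially the same route as the paper's proof: apply Perron's formula to $S_{b,c}^{\theta}(T)$ with the Dirichlet series $G_{1,b,c}(s)$ and shift to $\text{Re}(s)=-1/2$, apply Perron to $J_{4,1}^{\theta}(T)$ with $H_{1,b,c}(s)$ and shift to $\text{Re}(s)=\lambda_{\theta}/2$, bound the vertical and horizontal integrals via Lemma~\ref{lem6.1} (and its direct analogue using the cube-moment bound of \cite{Bet2}) and the Perron truncation errors via $R_{1,1}(T,Q)$ of Lemma~\ref{lem6.2}, dispose of $J_{2,1}^{\theta}(T)$ by Lemma~\ref{lemi12}, and then assemble via (\ref{I1TTTrs}). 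The residue computations, the identification of the primed constants, and the handling of the confluent case $b=c$ all match the paper.
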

\begin{proof}

We begin our discussion by recalling (\ref{QQQQ}) and computing $J_{4,1}^{\theta}(T)$, it being convenient to such an end to define the meromorphic function 
$$H_{1,b,c}(s)=\zeta(s+1-1/\theta)\zeta\big(bs+(1+b)/2-b/\theta\big)\zeta\big(cs+(1+c)/2-c/\theta\big)$$ and take $\delta_{Q}=(\log Q)^{-1}$. We then recall equation (\ref{sthe}) and note as in the discussion right above (\ref{J3111}) that the tuples underlying the sum in the definition of $J_{4,1}^{\theta}(T)$ satisfy
$$n_{1}^{1/\theta}=n_{2}^{b/\theta}n_{3}^{c/\theta}n_{4}^{1/\theta}\geq \max(n_{2}^{2}/b,n_{3}^{2}/c,n_{4}^{1/\theta}),$$ the above inequality being in turn a consequence of the assumption on $\theta$, whence it transpires upon recalling (\ref{N3N4}) that $N_{\bfn,n_{4},\theta}=2\pi n_{1}^{1/\theta}.$ 

We then observe that an application of Perron's formula (see \cite[Theorem 5.2]{MonVau}) yields
\begin{align*}J_{4,1}^{\theta}(T)=&2\pi \sum_{r_{2}^{b}r_{3}^{c}r_{4}\leq Q}r_{2}^{b/\theta-(b+1)/2}r_{3}^{c/\theta-(c+1)/2}r_{4}^{1/\theta-1}=i^{-1}\int_{1/\theta+\delta_{Q}-iQ^{2}}^{1/\theta+\delta_{Q}+iQ^{2}}H_{1,b,c}(s)Q^{s}\frac{ds}{s}
\\
&+E_{-1}(T,Q),
\end{align*}
wherein $E_{-1}(T,Q)$ satisfies as is customary the estimate
$$E_{-1}(T,Q)\ll R_{1,1}(T,Q)+TQ^{-2}\ll TQ^{-1}(\log Q)^{2},$$ the term $R_{1,1}(T,Q)$ having been previously defined in (\ref{pries}), and wherein the last step we applied Lemma \ref{lem6.2} in conjunction with the discussion right above such a lemma. We focus first on the instance $c<b$, recall (\ref{K1bc})  and routinarily shift the line of integration to $\text{Re}(s)=\lambda_{\theta}/2$ in the above integral to obtain
\begin{align*}J_{4,1}^{\theta}(T)=&\theta K_{1,b,c}T+D'_{c,b}TQ^{-1/2+1/2c}+D'_{b,c}TQ^{-1/2+1/2b}+E_{-1}(T,Q)+2\pi E_{1,-1}(T,Q)
\\
&+2\pi\sum_{m=1}^{2}\xi_{m}E_{2,m,-1}(T,Q),
\end{align*}
wherein we wrote $$E_{1,-1}(T,Q)=\frac{1}{2\pi i}\int_{\lambda_{\theta}/2-iQ^{2}}^{\lambda_{\theta}/2+iQ^{2}}H_{1,b,c}(s)Q^{s}\frac{ds}{s}$$ and $$E_{2,m,-1}(T,Q)=\frac{1}{2\pi i}\int_{\lambda_{\theta}/2+\xi_{m}iQ^{2}}^{1/\theta+\delta_{Q}+\xi_{m}iQ^{2}}H_{1,b,c}(s)Q^{s}\frac{ds}{s},\ \ \ \ \ \ \ \ \ \ \ \ \ m=1,2$$
for $\xi_{m}=(-1)^{m}.$ In the interest of curtailing our discussion it seems worth recalling (\ref{QQQQ}) and observing that an analogous argument to the one employed in the proof of Lemma \ref{lem6.1} allows one to bound the above integrals similarly, thereby obtaining
$$E_{1,-1}(T,Q)\ll TQ^{-1/2}(\log Q)^{13/4},\ \ \ \ \ \ \ \ \ \ \ \ \ \ E_{2,m,-1}(T,Q)\ll TQ^{-1},$$ which combined with the above equations and Lemma \ref{lem6.2} in conjunction with the discussion right above such a lemma yields
\begin{align*}J_{4,1}^{\theta}(T)=&\theta K_{1,b,c}T+D'_{c,b}TQ^{-1/2+1/2c}+D'_{b,c}TQ^{-1/2+1/2b}+O\big(TQ^{-1/2}(\log Q)^{13/4}\big).
\end{align*}
For the case $b=c$ the formula
$$J_{4,1}^{\theta}(T)=\theta K_{1,b,b}T+\big(F'_{1,b}(\log Q)+F'_{0,b}\big)TQ^{-1/2+1/2b}+O\big(TQ^{-1/2}(\log Q)^{13/4}\big)$$ holds instead. 

The examination of the term $S_{b,c}^{\theta}(T)$ requires recalling first (\ref{rris}). We set $\delta_{Q}=(\log Q)^{-1}$ and employ Perron's formula to obtain
\begin{align*}S_{b,c}^{\theta}(T)&=\sum_{\substack{r_{2}^{b}r_{3}^{c}r_{4}\leq Q}}r_{2}^{-(1+b)/2}r_{3}^{-(1+c)/2}r_{4}^{-1}=\frac{1}{2\pi i}\int_{\delta_{Q}-iQ^{2}}^{\delta_{Q}+iQ^{2}}G_{1,b,c}(s)Q^{s}\frac{ds}{s}+R_{S,1}(Q),
\end{align*}
the function $G_{1,b,c}(s)$ having been defined in (\ref{rris}), and wherein the reader may recall from (\ref{priest}) that a customary application of Lemma \ref{lem6.2} yields $$R_{S,1}(Q)\ll Q^{-1}(\log Q)^{2}.$$
We recall equations (\ref{Ars}), (\ref{ebc}) and (\ref{abf}), and shift the contour integral to the line $\text{Re}(s)=-1/2$ to obtain for the case $b>1$ the formula
\begin{align*}\label{linde}S_{b,c}^{\theta}(T)=&K_{1,b,c}\log Q+B_{b,c}-A'_{b,c}Q^{-1/2+1/2b}-A'_{c,b}Q^{-1/2+1/2c}
\\
&+\frac{1}{2\pi i}\int_{-1/2-iQ^{2}}^{-1/2+iQ^{2}}G_{1,b,c}(s)Q^{s}\frac{ds}{s}+\frac{1}{2\pi i}\sum_{m=1}^{2}\xi_{m}\int_{-1/2+\xi_{m}iQ^{2}}^{\delta_{Q}+\xi_{m}iQ^{2}}G_{1,b,c}(s)Q^{s}\frac{ds}{s}
\\
&+R_{S,1}(Q),
\end{align*}
wherein we remind the reader that $\xi_{m}=(-1)^{m}.$ Therefore, an application of Lemma \ref{lem6.1} in conjunction with the above formulae delivers
\begin{equation*}S_{b,c}^{\theta}(T)=K_{1,b,c}\log Q+B_{b,c}-A'_{b,c}Q^{-1/2+1/2b}-A'_{c,b}Q^{-1/2+1/2c}+O\big(Q^{-1/2}(\log Q)^{13/4}\big).\end{equation*}
If on the contrary $b=c$ then one has instead that
\begin{align*}\label{J3for34w}S_{b,b}^{\theta}(T)=&K_{1,b,b}\log Q+B_{b,b}-(E'_{1,b}\log Q+E'_{0,b})Q^{-1/2+1/2b}+O\big(Q^{-1/2}(\log Q)^{13/4}\big).\end{align*}
The desired result follows by the above equations combined with Lemma \ref{lemi12} and (\ref{I1TTTrs}).
\end{proof}
The proof of Theorem \ref{thm10} follows by combining the previous proposition and the remark preceding equation (\ref{fracc}) with equation (\ref{piresiti23}) and Lemma \ref{lem604}.

\end{document}